\newtheorem{theorem}{Theorem}[section]
\newtheorem{lemm}[theorem]{Lemma}
\newtheorem{prop}[theorem]{Proposition}
\newtheorem{theo}[theorem]{Theorem}
\theoremstyle{definition}
\newtheorem{defi}[theorem]{Definition}
\theoremstyle{remark}
\newtheorem{remark}[theorem]{Remark}
\numberwithin{equation}{section}
\newcommand{\la}{\lambda}
\newcommand{\vep}{\varepsilon}
\def\ot{\otimes}
\def\ra{\rangle}
\def\la{\langle}
\begin{document}

\title[On Hopf algebraic structures of quantum toroidal algebras]
{On Hopf algebraic structures of quantum toroidal algebras}

%%% ----------------------------------------------------------------------
\author[Jing]{Naihuan Jing}
\address{Department of Mathematics,
   North Carolina State University,
   Ra\-leigh, NC 27695-8205, USA}%\\
\email{jing@math.ncsu.edu}

\author[Zhang]{Honglian Zhang$^\star$}
\address{Department of Mathematics, Shanghai University,
Shanghai 200444, China} \email{hlzhangmath@shu.edu.cn}

\thanks{$^\star$ H.Zhang, Corresponding Author}

%    General info
\subjclass[2010]{17B37, 17B67}
\date{Version on Jan. 13, 2021}

\keywords{Drinfeld realization, quantum toroidal algebra, Drinfeld generator, comultiplication, Hopf algebra.}
%%%%%%%%%%%%%%%%%%%%%%%%%%%%%%%%%%%%%%%%%%%%%%%%%%%%%%%%%%%%%%%%%%%%%%%%
%\footnote{Corresponding author.}%
%%%%%%%%%%%%%%%%%%%%%%%%%%%%%%%%%%%%%%%%%%%%%%%%%%%%%%%%%%%%%%%%%%%%%%%%
\begin{abstract}
We define an algebra $\mathcal{U}_0$ using a simplified set of generators for the quantum toroidal algebra $U_q(sl_{n+1}, tor)$ and show that there exists  an epimorphism from $\mathcal{U}_0$ to $U_q(sl_{n+1}, tor)$.
We derive a closed formula of  the comultiplication on the generators of $\mathcal{U}_0$ that
extends that of the quantum affine algebra $U_q(\widehat{sl}_{n+1})$. As a consequence,
we show that $\mathcal{U}_0$ is a Hopf algebra for $n=1, 2$ and give conjectural formulas in the general case.
We further show that $\mathcal{U}_0$ is isomorphic to a double algebra.
\end{abstract}

\maketitle

\section{ Introduction}

The quantum toroidal algebras were first introduced via geometric realization in type A \cite{GKV} and
then through the McKay correspondence in the general $ADE$ types \cite{FJW} in terms of Drinfeld generators
and relations. Their structures
have been studied in various works \cite{VV, H1}, and in particular
the horizontal and vertical quantum affine subalgebras have played an important role in subsequent investigations.
The quantum toroidal algebra is also closely related to
the quantum Kac-Moody algebra \cite{J3}, which has a Hopf algebra structure in terms of
Drinfeld-Jimbo generators. It is natural to expect that the quantum toroidal algebras
are also quantum groups in Drinfeld's sense \cite{D1}, and the quantum group
structure should naturally extend to that of quantum affine algebras.

In \cite{H2, FJM1, FJM2, M1, M3}, tensor products of modules of quantum toroidal algebras are studied in terms of Drinfeld-type
(infinite) comultiplication (cf \cite{DI, J2}).
These works also indicate more tensor products of modules could be constructed if there exists a finite type comultiplication.
However, it has been a long-standing problem to find such a (finite) Hopf algebra structure
for the quantum toroidal algebra. The difficulties might be due to the fact that
the quantum toroidal algebras have complicated high degree Serre relations (see \cite{M2}).

Besides the Drinfeld-Jimbo presentation, Beck \cite{B} has given a general formula for the Hopf algebra structure of the quantum affine algebra
in terms of the universal R-matrix and the braid group action. Through the Ding-Frenkel isomorphism \cite{DF}
the Hopf algebra structure is also given in terms of the
 $L^{\pm}(z)$-generators \cite{FR}.
 In a different approach \cite{JZ3}, the authors
 have computed the  Hopf comultiplication of
 the two-parameter quantum affine algebras explicitly in terms of some simpler Drinfeld generators \cite{D2}. The special case of the
 comultiplication formula gives the same (Drinfeld-Jimbo) Hopf algebra structure for the quantum affine algebras.
The goal of this paper is to generalize and study the Hopf algebra structure for the quantum toroidal algebras of type A in terms of its Drinfeld generators.
%With this Hopf algebra structure, the quantum toroidal algebra can be then realized as a double quantum algebra.

As our comultiplication stems out of the usual Drinfeld-Jimbo comultiplication over the
quantum affine algebra, it is compatible with
the $q$-characters \cite{FR1}. We define an algebra $\mathcal{U}_0$ using a simplified set of generators for the quantum toroidal algebra $U_q(sl_{n+1}, tor)$. We prove that there exists  an epimorphism between $\mathcal{U}_0$ and  quantum toroidal algebra $U_q(sl_{n+1}, tor)$
We have shown that this does give a Hopf algebra structure of $\mathcal{U}_0$ for the
case of $n=1$ and $n=2$.
We also formulate a conjectural formula of the coproduct in the general case, and it is also
interesting to see if this type of comultiplication exists for other quantum toroidal algebras following \cite{FR1, FM}. We remark that
our conjectured formulas seem to be related with rooted trees.

Guay, Nakajima and Wendlandt \cite{GNW} have recently given a Hopf algebra structure on the affine Yangian algebra $Y(\widehat{\mathfrak g})$
in terms of the Drinfeld generators, which generalizes the comultiplication of the current algebra $\widehat{\mathfrak g}[t]$ (see also \cite{KT}).
Our result would induce a corresponding
Hopf algebra structure for the double affine Yangian algebra $DY(\widehat{\mathfrak g})$ in view of \cite{GTL, GM}, which
generalizes the multiplication of the loop algebra $\widehat{\mathfrak g}[t, t^{-1}]$.

The paper is organized as follows. In section 2, we review some basic results for the quantum affine algebra
\cite{JZ3} to prepare for later discussions. In section
4 we define an algebra $\mathcal{U}_0$ using a subset of Drinfeld generators for the quantum toroidal algebra $U_q(sl_{n+1},tor)$
and we prove that there exists  an epimorphism between $\mathcal{U}_0$ and $U_q(sl_{n+1}, tor)$ . In section 5, we define the comutiplication $\Delta$ on the generators of the algebra  $\mathcal{U}_0$ generated by the simpler set of generators, and show that the Hopf algebra structure for $n=1, 2$. In section 6, we prove that $\mathcal{U}_0$ is characterized as a Drinfeld double ${\mathcal D}({\mathcal B}, {\mathcal B'})$ with respect to a skew-dual paring based on the comultiplication.

\section{Quantum affine algebras}

The quantum affine algebras have the Hopf algebra structure defined by Drinfeld and Jimbo
in terms of the Chevalley generators (see also \cite{B}).  %gave a formula of the comultiplication.
Drinfeld gave an infinite coproduct on the completion algebra (cf. \cite{J1}).
Now we recall a commultiplication for the Drinfeld generators \cite{JZ3}
coming from the two-parameter quantum affine algebras.

 Let $A=(a_{ij})_{i, j\in I} $ be the generalized Cartan matrix
of the (untwisted) affine Lie algebra $\hat{\mathfrak{g}}$, where
$I =\{0, 1, \cdots, n\}$ and $I_0=I\backslash\{0\}$ are the index sets of simple roots $\alpha_i$
of $\hat{\mathfrak{g}}$ and the finite dimensional simple Lie algebra $\mathfrak g$ respectively.
The canonical bilinear form $( \ , \ )$ of $\hat{\mathfrak{g}}$
 satisfies $(\alpha_i, \alpha_j)=d_ia_{ij}$ and $
(\delta, \alpha_i)=(\delta, \delta)=0$
%\end{equation*}
where $\delta$ is the canonical imaginary root. So $d_i=\frac12(\alpha_i, \alpha_i)$
and let $q_i=q^{d_i}$.
Let $\mathfrak{h}$ be the Cartan subalgebra of $\hat{\mathfrak{g}}$ and  $\mathfrak{h}^*$ the dual Cartan subalgebra.

\begin{defi} The quantum affine algebra $U_{q}(\hat{\mathfrak{g}})$ is the unital
associative algebra over $\mathbb C(q)$ generated by $e_i,\, f_i,\, K_i^{\pm 1},\, (i\in I)$, $D^{\pm1}$ and the central elements
$\gamma^{\pm\frac{1}2}$ subject to the following relations:
\begin{align*}
 &(\mathcal{X}1) &&
  K_i\,K_i^{-1}=K_i^{-1}\,K_i=1, [\,K_i^{\pm 1},K_j^{\,\pm 1}\,]=[\,K_i^{\pm1},
D^{\pm1}\,]=0,\\
 &(\mathcal{X}2)  && D\,e_i\,D^{-1}=q_i^{\delta_{0i}}\,e_i,\qquad D\,f_i\,D^{-1}=q_i^{-\delta_{0i}}\,f_i,\\
 &                && K_je_iK_j^{-1}=q_i^{a_{ij}}\,e_i,  \qquad K_jf_iK_j^{-1}=q_i^{-a_{ij}}\,f_i.\\
 & (\mathcal{X}3) && [\,e_i, f_j\,]=\frac{\delta_{ij}}{q_i-q_i^{-1}}(K_i-K_i^{-1}). \\
&(\mathcal{X}4)  &&
\bigl(\text{ad}_l\,e_i\bigr)^{1-a_{ij}}\,(e_j)=0,\quad \bigl(\text{ad}_r\,f_i\bigr)^{1-a_{ij}}\,(f_j)=0, \quad i\ne j\in I,
\end{align*}
where the left-adjoint $\text{ad}_l\,e_i$
and the right-adjoint $\text{ad}_r\,f_i$ are defined by
\begin{align*}
\text{ad}_{ l}\,a\,(b)=\sum_{(a)}a_{(1)}\,b\,S(a_{(2)}), \quad
\text{ad}_{ r}\,a\,(b)=\sum_{(a)}S(a_{(1)})\,b\,a_{(2)}, \quad
\forall\; a, b\in U(\hat{\mathfrak g}),
\end{align*}
where $\Delta(a)=\sum_{(a)}a_{(1)}\ot a_{(2)}$ is the comultiplication
of $U_{q}(\hat{\mathfrak{g}})$ given below.
\end{defi}

The Hopf algebra structure of $U_{q}(\hat{\mathfrak{g}})$ can be easily described as follows.
Under the comultiplication $\Delta$, the elements $K_i, D$ are group-like elements and
\begin{align*}
\Delta(e_i)=e_i\ot 1+K_i\ot e_i, \qquad \Delta(f_i)=f_i\ot K_i^{-1}+1\ot f_i.
\end{align*}
The antipode $S$ is given by
$S(e_i)=-K_i^{-1}e_i, S(f_i)=-f_i\,K_i,
S(K_i)=K_i^{-1}, S(D)=D^{-1}$, and the counit $\vep$
is given by $\varepsilon(e_i)=\varepsilon(f_i)=0, \varepsilon(K_i)=\varepsilon(D)=1$.

%The following statement is an immediate consequence of the quantum PBW theorem.
%
%\begin{prop} As vector spaces
%$$U_{q}(\hat{\mathfrak{g}})\cong U_{q}(\widehat{\mathfrak n}^-)\ot U^0\ot
%U_{q}(\widehat{\mathfrak n}), $$
%where
%$U_{q}(\widehat{\mathfrak n})$ (resp.
%$U_{q}(\widehat{\mathfrak n}^-)$) is the subalgebra of
%$U_{q}(\hat{\mathfrak{g}})$ generated
%by $e_i$ (resp. $f_i$) for all $i\in I$ and
%$U^0=$ $\mathbb{C}[K_0^{\pm1},\cdots,K_n^{\pm1},\,\gamma^{\pm\frac{1}2},\,D^{\pm 1}]$.
%\end{prop}

The $v$-bracket is defined by $[a, b]_v=ab-vba$. For $v_i\in \mathbb{C}(q,\,q^{-1})\backslash \{0\}$,
we define two types of multi-brackets inductively by
%  the quantum Lie brackets $[\,a_1, a_2,\cdots,
%  a_s\,]_{(v_1, \cdots,\, v_{s-1})}$ and
%  $[\,a_1, a_2, \cdots,
%  a_s\,]_{\la v_1,\cdots, \,v_{s-1}\ra}$ are defined
 % inductively by
   \begin{gather*}   % [\,a_1, a_2\,]_{v_1}=a_1a_2-v_1\,a_2a_1,\\
  [\,a_1, a_2, \cdots, a_s\,]_{(v_1,\,v_2,\,\cdots,
  \,v_{s-1})}=[\,a_1, \cdots, [a_{s-1},
  a_s\,]_{v_1}]_{(v_2,\,\cdots,\,v_{s-1})},\\
 [\,a_1, a_2, \cdots, a_s\,]_{\la v_1,\,v_2,\,\cdots,
  \,v_{s-1}\ra}=[\,[\,a_1, a_2]_{v_1} \cdots, a_{s-1}\,]_{\la
  v_2,\,\cdots,\,v_{s-2}\ra}.
 \end{gather*}
The following identities will be useful \cite{J1}:
\begin{eqnarray}
%&&[\,a, bc\,]_v=[\,a, b\,]_q\,c+q\,b\,[\,a, c\,]_{\frac{v}q},\\
%&&[\,ab, c\,]_v=a\,[\,b, c\,]_q+q\,[\,a, c\,]_{\frac{v}q}\,b, \\
&& [\,a,[\,b,c\,]_u\,]_v=[\,[\,a,b\,]_q,
c\,]_{\frac{uv}q}+q\,[\,b,[\,a,c\,] _{\frac{v}q}\,]_{\frac{u}q},\label{v1}\\
&&[\,[\,a,b\,]_u,c\,]_v=[\,a,[\,b,c\,]_q\,]_{\frac{uv}q}+q\,[\,[\,a,c\,]
_{\frac{v}q},b\,]_{\frac{u}q}.\label{v2}
\end{eqnarray}

The quantum affine algebra also admits Drinfeld's new realization \cite{D1}, which is the quantum analog
of the current algebra realization for the affine Lie algebra.

\begin{defi} \cite{B, D2}
The Drinfeld realization
$\mathcal{U}_{q}(\hat{\mathfrak{g}})$ is an associative algebra over $\mathbb C(q)$
generated by $x_i^{\pm}(k)$, $a_i(r)$, $K_i^{\pm1}$,
 $D^{\pm1}$ $( i\in I_0$,
$k\in \mathbb{Z}$, $r\in \mathbb{Z}\backslash
\{0\})$ and the central elements $\gamma^{\pm\frac{1}{2}}$ satisfy the following relations.
\begin{align} \label{e:Dr1}
& D\,x_i^{\pm}(k)\,D^{-1}=q^k, \quad D\, a_i(r)\,D^{-1}=q^r\,a_i(r),\\ \label{e:Dr2}
&  K_i^{\pm 1}\,K_i^{\mp 1}=1, \quad K_i\,x_j^{\pm}(k)\, K_i^{-1} = q^{\pm a_{ij}}x_j^{\pm}(k),
\\  \label{e:Dr5}
& [\,a_i(r),~K_j^{{\pm }1}\,]=0,
 \\ \label{e:Dr3}
&[\,a_i(r),a_j(r')\,]
=\delta_{r+r',0}\frac{
[\,r a_{i,j}\,]_i}{r}\cdot\frac{\gamma^{r}-\gamma^{-r}}{q-q^{-1}},\\ \label{e:Dr6}
&[\,a_i(r),x_j^{\pm}(k)\,]=\pm \frac{ [r a_{ij}]_i}{r} \gamma^{\mp\frac{r}2}x_j^{\pm}(r{+}k),\\ \label{e:Dr7} %e:comm3
&[x_i^{\pm}(k+1),x_j^{\pm}(l)\,]_{q^{\pm(\alpha_i, \alpha_j)}}=-[x_j^{\pm}(l+1),x_i^{\pm}(k)\,]_{q^{\pm(\alpha_i, \alpha_j)}},\\ \label{e:Dr8}
&[\,x_i^{+}(k),~x_j^-(k')\,]=\frac{\delta_{ij}}{q_i-q_i^{-1}}\Big({\gamma}^{\frac{k-k'}{2}}\,
\phi_i(k{+}k')-\gamma^{\frac{k'-k}{2}}\,\varphi_i(k{+}k')\Big),%\leqno(\textrm{D8})
\end{align}
where $\phi_i(m)$, $\varphi_i(-m)~(m\in \mathbb{Z}_{\geq 0})$ such that
$\phi_i(0)=K_i$ and  $\varphi_i(0)=K_i^{-1}$ are defined as below:
\begin{align*} %\label{e:Dr9}
\sum\limits_{m=0}^{\infty}\phi_i(m) z^{-m}&=K_i \exp \Big(
(q_i{-}q_i^{-1})\sum\limits_{\ell=1}^{\infty}
 a_i(r)z^{-r}\Big),\\ %\label{e:Dr10}
\sum\limits_{m=0}^{\infty}\varphi_i(-m) z^{m}&=K_i^{-1} \exp
\Big({-}(q_i{-}q_i^{-1})
\sum\limits_{r=1}^{\infty}a_i(-r)z^{r}\Big),
\end{align*}
\begin{align}\label{e:Dr11}
& \underset{m_1,\cdots,
m_{n}}{Sym}
\sum_{k=0}^{n}(-1)^k
\Big[{n\atop  k}\Big]_{i}x_i^{\pm}(m_1)\cdots x_i^{\pm}(m_k) x_j^{\pm}(l)x_i^{\pm}(m_{k+1})\cdots x_i^{\pm}(m_{n})=0, \quad i\neq j,
\end{align} %\leqno{(\textrm{D8})}
 where $n=1-a_{ij}$, $[m]_{i}=\frac{q_i^{m}-q_i^{-m}}{q_i-q_i^{-1}}$, $[m]_{i}!=[m]_{i}\cdots[2]_{i}[1]_{i}$,
$\Bigl[{m\atop n}\Bigr]_{i}=\frac{[m]_{i}!}{[n]_{i}![m-n]_{i}!}$, and
$\textit{Sym}_{m_1,\cdots, m_n}$  denotes the symmetrization with respect to the indices $(m_1, \cdots, m_n)$.
\end{defi}

%The following well-known result follows easily from the definition and commutation relations
%by rearranging monomial elements of the algebra in the prescribed order.
%
%\begin{prop}$\mathcal{U}_{q}(\hat{\mathfrak{g}})$ has a triangular
%decomposition:
%$$\mathcal{U}_{q}(\hat{\mathfrak{g}})\cong
%\mathcal{U}_{q}(\widetilde{\mathfrak{n}}^-)\otimes\mathcal{U}_{q}^0(\widehat{\mathfrak{g}})
%\otimes\mathcal{U}_{q}(\widetilde{\mathfrak{n}}^+),$$  where
%$\mathcal{U}_{q}(\widetilde{\mathfrak{n}}^\pm)$ is
%generated by $x_i^\pm(k)$ respectively, and
%$\mathcal{U}_{q}^0(\widehat{\mathfrak{g}})$ is
%generated by $K_i^{\pm1}$,
%$\gamma^{\pm\frac1{2}}$,  $D^{\pm1}$,
% and $a_i(\pm\ell)$.
%\end{prop}

We now choose a convenient set of generators for $\mathcal{U}_{q}(\hat{\mathfrak{g}})$:
%Denote by $\mathcal{U}'_{q}(\hat{\mathfrak{g}})$ the subalgebra of
%$\mathcal{U}_{q}(\hat{\mathfrak{g}})$ generated by
\begin{align}\label{e:generators}
x_i^{\pm}(0),\,x_1^+(-1), x_1^-(1), K_i^{\pm1} ,
\gamma^{\pm\frac{1}2}, \quad D^{\pm1} \quad i\in I_0,
\end{align}
where $\gamma^{\pm\frac 12}$ are central and the relations are: \eqref{e:Dr2}-\eqref{e:Dr5} for
$l=0, k=0, \pm 1$; \eqref{e:Dr8} for $k=k'=0$ and $k=-k'=1$; and
 \begin{align}
 %\label{sub:1}
%	&\gamma^{\pm\frac{1}{2}}~~
%	\textrm{are central and} \quad K_i^{\pm 1}\,K_i^{\mp 1}=1,
%	\\ \label{sub:2}
%&D\,x_i^{\pm}(k)\,D^{-1}=q^k\, x_i^{\pm}(k),\\	
%&K_i\,x_{j}^{\pm}(0)\, K_i^{-1} = q_i^{\pm a_{i{j}}}x_{j}^{\pm}(0), \qquad K_i\,x_{1}^{\pm}(\mp 1)\, K_i^{-1} = q_i^{\pm a_{{i}1}}x_{1}^{\pm}(\mp 1),	\\
\label{sub:3}
&[\,x_1^{\epsilon}(-\epsilon ),\,x_1^{\epsilon}(0)\,]_{q^{-2}}=0,
%\\\label{sub:4}	
%&[\,x_i^{+}(0),\,x_j^{-}(0)\,]=\delta_{ij}\frac{K_i-K_i^{-1}}{q-q^{-1}},\qquad [\,x_1^{+}(-1),\,x_1^{-}(1)\,]=\frac{\gamma^{-1}K_1-\gamma K_1^{-1}}{q-q^{-1}},
\\\label{sub:5}
&[\,x_i^{-\epsilon}(0),\,x_1^{\epsilon}(-\epsilon)\,]=0,
\quad \hbox{for} \quad i\neq 0,\\\label{sub:6}
&[x_i^{\pm}(0), x_j^{\pm}(0)]=0, \qquad [x_1^{\epsilon}(-\epsilon), x_k^{\epsilon}(0)]=0, \quad\mbox{for}\ \  a_{ij}=a_{1k}=0,\\\label{sub:7}
&  \sum_{k=0}^{n=1-a_{i{j}}}(-1)^k
	\Big[{n\atop  k}\Big]_{i}(x_i^{\pm}(0))^k x_{j}^{\pm}(0)(x_i^{\pm}(0))^{n-k}=0,
	\quad\hbox{for} \quad   i\neq j,\\\label{sub:8}
& \sum_{k=0}^{2}(-1)^k
	\Big[{2\atop  k}\Big]_{i}(x_2^{\epsilon}(0))^k x_1^{\epsilon}(-\epsilon)(x_2^{\epsilon}(0))^{2-k}=0,\\\label{sub:9}
 &  \sum_{k=0}^{2}(-1)^k
	\Big[{2\atop  k}\Big]_{i}(x_1^{\epsilon}(-\epsilon))^k x_{2}^{\epsilon}(0)(x_1^{\epsilon}(-\epsilon))^{2-k}=0,\\
 \label{sub:10}
 &  Sym_{{0},
		{-\epsilon}}\sum_{k=0}^{2}(-1)^k
	\Big[{2\atop  k}\Big]_{i}(x_1^{\epsilon}(-\epsilon))^k x_{2}^{\epsilon}(0)(x_1^{\epsilon}(0))^{2-k}=0,
	\end{align}
	where $\textit{Sym}_{{m_1},
		{m_{2}}}$  denotes the symmetrization with respect to the indices $({m_1}, {m_{2}})$.

% which are same to some relations $(\ref{e:Dr1})$ to $(\ref{e:Dr11})$ (except those involving
%$a_i(l)$) for the indices
%allowed in the set of the generators.
%\begin{equation*}
%{\mathcal U}'_{q}(\hat{\mathfrak{g}})=\langle x_i^{\pm}(0), x_1^+(-1), x_1^-(1), K_i^{\pm1},
%\gamma^{\pm\frac{1}2}, D^{\pm1}|i\in I\rangle.
%\end{equation*}

Starting from the elements in (\ref{e:generators}), we can inductively determine other Drinfeld generators as follows.
First we have
\begin{align}
a_1(\pm 1)=\pm K_1^{\mp 1}\gamma^{1/2}[ x_1^{\pm}(0), x^{\mp}_1(\pm 1)].
%a_1(-1)&=K_1\gamma^{1/2}[x^+_1(-1), x_1^-(0)].
\end{align}
Then $x_2^{+}(\mp 1)$ and $x_2^{-}(\mp 1)$ can be determined by relation $(\ref{e:Dr6})$, and subsequently all other $x_i^{+}(\mp 1)$ and $x_i^{-}(\mp 1)$
are fixed. Using this idea, one can inductively prove the following result
(cf. \cite{JZ3}).
%the result was proved for more general two-parameter quantum affine algebras.
\begin{prop} %\cite{JZ3}\,
As an associative algebra, the algebra generated by
the elements in (\ref{e:generators}) subject to the aforementioned relations is identical to
${\mathcal U}_{q}(\hat{\mathfrak{g}})$.
\end{prop}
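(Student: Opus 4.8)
The plan is to let $A$ denote the subalgebra of $\mathcal{U}_{q}(\hat{\mathfrak{g}})$ generated by the elements in (\ref{e:generators}); since $A\subseteq \mathcal{U}_{q}(\hat{\mathfrak{g}})$ is automatic, everything reduces to showing that every Drinfeld generator $x_i^{\pm}(k)$ $(i\in I_0,\ k\in\mathbb{Z})$ and $a_i(\ell)$ $(i\in I_0,\ \ell\in\mathbb{Z}\setminus\{0\})$ already lies in $A$. I would establish this by a bootstrapping argument that first produces the degree $\pm 1$ currents along the whole Dynkin diagram and then climbs to arbitrary modes by a pair of nested inductions, using only the commutation relations (\ref{e:Dr6}) and (\ref{e:Dr8}) together with the generating functions (\ref{e:Dr9}) and (\ref{e:Dr10}).

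For the first layer I would start exactly as indicated before the statement: the generators $x_1^{+}(0), x_1^{-}(1), x_1^{+}(-1), x_1^{-}(0)$ feed into (\ref{e:Dr8}), which, after reading off $\phi_1(1)$ and $\varphi_1(-1)$ from (\ref{e:Dr9})--(\ref{e:Dr10}), yields $a_1(\pm 1)\in A$. Next I would invoke (\ref{e:Dr6}) with $\ell=\pm 1$: for any node $j$ adjacent to $1$ one has $a_{1j}\neq 0$, so $[a_1(\pm 1),x_j^{\pm}(0)]$ is a nonzero scalar multiple of $x_j^{\pm}(\pm 1)$, placing $x_j^{\pm}(\pm 1)$ in $A$; relation (\ref{e:Dr8}) then returns $a_j(\pm 1)\in A$. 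Running this along the connected type $A$ Dynkin diagram, by induction on the graph distance from node $1$, gives $x_i^{\pm}(\pm 1)\in A$ and $a_i(\pm 1)\in A$ for every $i\in I_0$.

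Once all the $a_i(\pm 1)$ are available, the two remaining inductions are short. Inducting on $|k|$ and using (\ref{e:Dr6}), each $x_i^{\pm}(k\pm 1)$ is a nonzero multiple of $[a_j(\pm 1),x_i^{\pm}(k)]$ for any neighbor $j$ of $i$, so all $x_i^{\pm}(k)$ with $k\in\mathbb{Z}$ lie in $A$. Inducting on $|\ell|$, I would compute $\phi_i(\ell)$ (resp.\ $\varphi_i(-\ell)$) as a central multiple of $[x_i^{+}(0),x_i^{-}(\ell)]$ (resp.\ $[x_i^{+}(-\ell),x_i^{-}(0)]$) via (\ref{e:Dr8}), and then extract $a_i(\ell)$ from the expansion of (\ref{e:Dr9}): there $\phi_i(\ell)$ equals $K_i(q_i-q_i^{-1})a_i(\ell)$ plus a polynomial in the lower modes $a_i(1),\dots,a_i(\ell-1)$, all of which are in $A$ by the inductive hypothesis, so $a_i(\ell)\in A$ as well.

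The computations themselves are routine commutator identities, so the substance of the proof lies in three structural points that I would be careful to check. First, every time a new current is solved for, the relevant structure constant $[\ell a_{ij}]_i/\ell$ (or $q_i-q_i^{-1}$ in the last step) must be nonzero; this holds because adjacent nodes satisfy $a_{ij}\neq 0$ and $q$ is generic. Second, the argument relies on the connectedness of the Dynkin diagram, both to propagate the degree $\pm 1$ currents from node $1$ to every node and to guarantee that each node has a neighbor with which to shift modes. Third, the exponential generating functions (\ref{e:Dr9})--(\ref{e:Dr10}) must be invertible order by order, which is what lets $a_i(\ell)$ be recovered from $\phi_i(\ell)$ once the lower modes are known. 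The main obstacle, such as it is, is purely organizational: arranging the nested inductions so that each step invokes only currents already shown to lie in $A$.
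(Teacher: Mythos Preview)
Your argument is correct and is exactly the inductive bootstrapping the paper sketches immediately before the proposition: extract $a_1(\pm1)$ via (\ref{e:Dr8}), propagate the degree $\pm1$ currents along the Dynkin diagram using (\ref{e:Dr6}), then climb in $|k|$ and $|\ell|$. One minor slip: you write ``the connected type $A$ Dynkin diagram,'' but the proposition is stated for arbitrary $\hat{\mathfrak g}$; since your argument uses only connectedness of the finite Dynkin diagram (and nonvanishing of $[\ell a_{ij}]_i$ for adjacent nodes and generic $q$), it applies verbatim in general --- indeed in the second induction you may simply take $j=i$, as $a_{ii}=2$.
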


%In the next section, we will give another
%proof in the general setting.
%\begin{proof}\, The proof is a special case of the two-parameter case in \cite{JZ3}.
%\end{proof}

%To simplify presentation, we use the following convention for $q$-brackets \cite{J1}.

We can describe the isomorphism between the two presentations in details.

Let $X_{\theta}=[e_{i_{h-1}}, [e_{i_{h-2}}, \cdots, [e_{i_2}, e_{i_1}]\cdots ]$ be the
root vector associated with the maximum root
$\theta=\alpha_{i_{h-1}}+\cdots+\alpha_{i_2}+\alpha_{i_1}$ of $\mathfrak g$.
This determines a sequence:
\begin{equation}\label{L:reduced}
i_1, i_2, \cdots, i_{h-1},
\end{equation}
where $i_k\in I$ and $h$ is the Coxeter number. We will call such a sequence a
{\it root chain} to the maximum root. Clearly root chains of the maximum root are not unique.

With respect to such a root chain for the maximum root $\theta$, we define the following numbers: for $t=2, \ldots, h-1$,
\begin{align*}
%u_{j}&=q^{(\alpha_{i_j},\alpha_{i_{j-1}})+(\alpha_{i_j},\alpha_{i_{j-2}})+\cdots+(\alpha_{i_j},\alpha_{i_{1}})}
%,\\
%u'_{j}&=q^{(\alpha_{i_j},\alpha_{i_{j}})+(\alpha_{i_j},\alpha_{i_{j-1}})+\cdots+(\alpha_{i_j},\alpha_{i_{1}})}
% ,\\
%u''_{j}&=q^{(\alpha_{i_j},\alpha_{i_{j+1}})+(\alpha_{i_j},\alpha_{i_{j}})+\cdots+(\alpha_{i_j},\alpha_{i_{1}})}
% ,\\
v_{j_t}&=q^{-((\alpha_{j_1},\alpha_{j_{1}})+(\alpha_{j_1},\alpha_{j_{2}})+\cdots+(\alpha_{j_{1}},\alpha_{j_{t}}))}
 ,\\
v'_{j_t}&=q^{-((\alpha_{j_1},\alpha_{j_{t}})+(\alpha_{j_2},\alpha_{j_{t}})+\cdots+(\alpha_{j_{t}},\alpha_{j_{t}}))}
 ,\\
v''_{j_t}&=q^{-((\alpha_{j_1},\alpha_{j_{t}})+(\alpha_{j_2},\alpha_{j_{t}})+\cdots+(\alpha_{j_{t+1}},\alpha_{j_{t}}))}.
\end{align*}

\begin{theorem}\cite{JZ3} For each fixed root chain to the maximum root
$\theta$ given in (\ref{L:reduced}), the map $\Psi:
U_{q}(\hat{\mathfrak{g}}) \longrightarrow {\mathcal
U}_{q}(\hat{\mathfrak{g}})$ defined below is an algebra isomorphism:
\begin{align*}
&K_i\longmapsto K_i, \quad\qquad
 K_0\longmapsto \gamma\,K_{\theta}^{-1},\\
&\gamma^{\pm\frac{1}2}\longmapsto \gamma^{\pm\frac{1}2},\qquad
D^{\pm1}\longmapsto D^{\pm1},\\
&e_i\longmapsto x_i^+(0),\qquad e_0\longmapsto  x^-_{\theta}(1)\cdot(\gamma\,K_{\theta}^{-1}),\\
&f_i\longmapsto x_i^-(0),\qquad
f_0 \longmapsto (\gamma^{-1}\,{K}_{\theta})\cdot x_{\theta}^+(-1),
\end{align*}
where $K_{\theta}=K_{i_1}\,\cdots\,
K_{i_{h-1}}$ and
\begin{align}\label{e:invmap1}
x_{\theta}^-(1)&=[x_{i_{h-1}}^-(0),\,\cdots,\, x_{i_2}^-(0),\, x_{i_1}^-(1)]_{(v_{i_2},\, \cdots,\,v_{i_{h-1}})}, \\ \label{e:invmap2} x_{\theta}^+(-1)&=[x_{i_{1}}^+(-1),\, x_{i_2}^+(0),\,\cdots,\,x_{i_{h-1}}^+(0)]_{\langle v_{i_2}^{-1},\, \cdots,\,v_{i_{h-1}}^{-1}\rangle}.
\end{align}
\end{theorem}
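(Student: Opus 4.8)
The plan is to establish $\Psi$ first as an algebra homomorphism and then as a bijection, the guiding principle being that every defining relation of $U_{q}(\hat{\mathfrak{g}})$ indexed purely by $I_0$ is the degree-$0$ specialization of a Drinfeld relation, so that essentially all of the work is carried by the affine node $0$. First I would fix the root chain (\ref{L:reduced}) and adopt (\ref{e:invmap1})--(\ref{e:invmap2}) as the definitions of the auxiliary elements $x_{\theta}^{-}(1)$ and $x_{\theta}^{+}(-1)$ inside ${\mathcal U}_{q}(\hat{\mathfrak{g}})$. Iterating the weights $-\alpha_{i_k}$ and the degrees of the factors shows that $x_{\theta}^{-}(1)$ has weight $-\theta$ and degree $+1$, and symmetrically for $x_{\theta}^{+}(-1)$; with $K_0=\gamma K_{\theta}^{-1}$ and $(\alpha_0,\alpha_j)=-(\theta,\alpha_j)$, the relations $(\mathcal{X}1)$ and $(\mathcal{X}2)$ at the node $0$ then follow by reading off these gradings (for instance $K_j e_0 K_j^{-1}=q^{-(\theta,\alpha_j)}e_0=q^{a_{0j}}e_0$ and $D e_0 D^{-1}=q\,e_0$).

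Next I would verify $(\mathcal{X}3)$. The mixed cases $[e_i,f_0]=[e_0,f_j]=0$ for $i,j\in I_0$ are proved by commuting $x_j^{-}(0)$ into the nested bracket (\ref{e:invmap1}) by means of (\ref{e:Dr8}) at matching degrees and observing that the resulting $\phi_i,\varphi_i$ contributions cancel, the reorganization being governed by the $q$-bracket identities (\ref{v1})--(\ref{v2}). The diagonal case is the substantive one: since the central and Cartan factors $\gamma^{\pm1},K_{\theta}^{\pm1}$ conjugate away, one computes $[e_0,f_0]=[x_{\theta}^{-}(1),x_{\theta}^{+}(-1)]$, so the claim $[e_0,f_0]=(K_0-K_0^{-1})/(q_0-q_0^{-1})$ is equivalent to the root-vector form $[x_{\theta}^{+}(-1),x_{\theta}^{-}(1)]=(q-q^{-1})^{-1}(\gamma^{-1}K_{\theta}-\gamma K_{\theta}^{-1})$ of (\ref{e:Dr8}). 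I would obtain this by an induction along the chain, peeling off one simple factor at a time with (\ref{e:Dr6})--(\ref{e:Dr8}), so that the intermediate $\phi$/$\varphi$ terms telescope and only the degree-$0$ Cartan part survives.

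The Serre relations $(\mathcal{X}4)$ involving the node $0$ are where I expect the main obstacle to lie. Because $e_0$ is the length-$(h-1)$ iterated $q$-bracket (\ref{e:invmap1}) rather than a single generator, the identities $(\mathrm{ad}_l\,e_0)^{1-a_{0j}}(e_j)=0$ and $(\mathrm{ad}_r\,f_0)^{1-a_{0j}}(f_j)=0$ do not collapse to a single Drinfeld--Serre relation; instead they must be assembled from the degree-shifted relations (\ref{e:Dr7}) and the Drinfeld--Serre relations (\ref{e:Dr11}) through repeated application of (\ref{v1})--(\ref{v2}). In type $A$ one has $1-a_{0j}\in\{1,2\}$, so concretely this amounts to proving $[e_0,e_j]=0$ for the interior nodes and $(\mathrm{ad}_l\,e_0)^2(e_j)=0$ for $j\in\{1,n\}$; I would prove these by an induction on the root chain that rewrites each affine Serre expression as a finite-type Serre relation transported up by one unit of degree. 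This is exactly the computation whose combinatorial weight the Introduction signals, and it is the step for which I would budget the most effort.

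Finally, to upgrade $\Psi$ to an isomorphism I would argue surjectivity and injectivity separately. For surjectivity, the image already contains $x_i^{\pm}(0)=\Psi(e_i),\Psi(f_i)$, together with $K_i^{\pm1},\gamma^{\pm1/2},D^{\pm1}$ and $e_0,f_0$; applying the adjoint action of the finite Chevalley generators to $e_0$ and $f_0$ peels off the outer brackets in (\ref{e:invmap1})--(\ref{e:invmap2}) and recovers $x_{i_1}^{-}(1)$ and $x_{i_1}^{+}(-1)$, so the image contains the generating set (\ref{e:generators}) and hence is all of ${\mathcal U}_{q}(\hat{\mathfrak{g}})$ by the preceding Proposition that (\ref{e:generators}) generates the algebra. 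For injectivity I would either construct an inverse homomorphism $\Phi$ directly, sending the Drinfeld generators to the images of Chevalley generators under Lusztig's braid symmetries $T_i$ as in Beck \cite{B} and checking $\Phi\circ\Psi=\mathrm{id}$ on $e_i,f_i,K_i,D$, or compare the triangular decompositions of the two presentations and note that a grading-preserving surjection between algebras of equal graded dimension is necessarily bijective.
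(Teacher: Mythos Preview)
The paper does not give a self-contained proof of this theorem: it is stated as a known result with citations to \cite{D1, B, JZ3}, and the surrounding text simply remarks that ``This theorem was proved \cite{JZ3} for the two-parameter quantum affine algebra using the new generators, thus also shows the Drinfeld isomorphism for one-parameter case.'' So there is no in-paper proof to compare your proposal against line by line.

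That said, your outline is the standard one and matches what the cited references do. Your surjectivity argument---showing the image contains the set (\ref{e:generators}) and invoking the preceding Proposition---is exactly the mechanism the paper has set up. Your injectivity strategy of constructing an explicit inverse $\Phi$ is precisely what the paper records next as Theorem~2.6 (the map $\Phi:{\mathcal U}_{q}(\hat{\mathfrak{g}})\to U_{q}(\hat{\mathfrak{g}})$), with Theorem~2.7 asserting that $\Phi$ and $\Psi$ are mutually inverse; so your plan is aligned with the paper's presentation. The alternative injectivity route you mention (graded-dimension comparison) is not what the paper or its references use, and would require an independent PBW-type input on both sides, so the explicit-inverse route is the one to pursue.
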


For example in type $A$, \eqref{e:invmap1}-\eqref{e:invmap2} read as follows: % for the root chain (\ref{L:reduced}):
\begin{align*}
x_{\theta}^-(1)&=[x_{n-1}^-(0),\,\cdots,\, x_{2}^-(0),\, x_{1}^-(1)]_{(q^{-1},\, \cdots,\,q^{-1})}, \\ x_{\theta}^+(-1)&=[x_{1}^+(-1),\, x_{2}^+(0),\,\cdots,\,x_{n-1}^+(0)]_{\langle q,\, \cdots,\,q\rangle}.
\end{align*}

%Given the reduced expression of $w_0$ we define the following numbers: for $j=2, \ldots, h-1$
%\begin{align*}
%%u_{j}&=q^{(\alpha_{i_j},\alpha_{i_{j-1}})+(\alpha_{i_j},\alpha_{i_{j-2}})+\cdots+(\alpha_{i_j},\alpha_{i_{1}})}
%%,\\
%%u'_{j}&=q^{(\alpha_{i_j},\alpha_{i_{j}})+(\alpha_{i_j},\alpha_{i_{j-1}})+\cdots+(\alpha_{i_j},\alpha_{i_{1}})}
%% ,\\
%%u''_{j}&=q^{(\alpha_{i_j},\alpha_{i_{j+1}})+(\alpha_{i_j},\alpha_{i_{j}})+\cdots+(\alpha_{i_j},\alpha_{i_{1}})}
%% ,\\
%v_{j}&=q^{-((\alpha_{i_1},\alpha_{i_{j}})+(\alpha_{i_2},\alpha_{i_{j}})+\cdots+(\alpha_{i_{j-1}},\alpha_{i_{j}}))}
% ,\\
%v'_{j}&=q^{-((\alpha_{i_1},\alpha_{i_{j}})+(\alpha_{i_2},\alpha_{i_{j}})+\cdots+(\alpha_{i_{j}},\alpha_{i_{j}}))}
% ,\\
%v''_{j}&=q^{-((\alpha_{i_1},\alpha_{i_{j}})+(\alpha_{i_2},\alpha_{i_{j}})+\cdots+(\alpha_{i_{j+1}},\alpha_{i_{j}}))}.
%\end{align*}
%We also denote $t'_{j}=\frac{v_{j}-v_{j}^{-1}}{q_{i_j}-q_{i_j}^{-1}}$. %{\color{red} Is it $u_i=v_i^{-1}$?}

This theorem was proved in \cite{JZ3}
for the two-parameter quantum affine algebra using the new generators, which include the above as a special case.
%thus also shows the Drinfeld isomorphism for one-parameter case.
In particular,
the inverse isomorphism of $\Phi$ can be given as follows.
%In the following we outline the
% steps for later purpose. %\cite{JZ3}.

%\begin{theo}\, $\Psi:\,U_{q}(\hat{\mathfrak{g}}) \longrightarrow {\mathcal U}_{q}(\hat{\mathfrak{g}})$ is an algebra homomorphism.
%\end{theo}

\begin{theo} \cite{JZ3} \,Let $1=i_1,\,i_2,\, \ldots, i_{h-1}$ be a fixed root chain to the maximum root
$\theta$ given in (\ref{L:reduced}). Then the following map $\Phi:\,{\mathcal U}_{q}(\hat{\mathfrak{g}})\longrightarrow U_{q}(\hat{\mathfrak{g}})$ given below is an epimorphism, where for $\forall i \in I$
\begin{align*}
&\Phi(\gamma)=\gamma,\qquad \Phi(D)=D,\qquad \Phi(K_i)=K_i,\\
&\Phi(x_i^-(0))=f_i,\qquad \Phi(x_i^+(0))=e_i ,\\
&\Phi(x_1^-(1))=[\,e_{i_2},\,e_{i_3},\,\cdots,\, e_{i_{h-1}},\, e_0\,]_{({v'}_{i_{h-1}}^{-1},\, \cdots,\, {v'}_{i_2}^{-1})}\gamma^{-1}K_{1},\\
&\Phi(x_1^+(-1))=\gamma K_{1}^{-1}[\,f_0,\,f_{i_{h-1}},\,f_{i_{h-2}},\,\cdots,\, f_{i_{2}}\,]_{\langle v'_{i_{h-1}},\, \cdots,\, v'_{i_2}\rangle}.
\end{align*}
\end{theo}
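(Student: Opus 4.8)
The plan is to deduce the statement from the isomorphism $\Psi$ of the preceding theorem rather than verifying the Drinfeld relations for $\Phi$ from scratch. Since $\Psi\colon U_q(\hat{\mathfrak g})\to\mathcal U_q(\hat{\mathfrak g})$ is an algebra isomorphism, its inverse $\Psi^{-1}$ is automatically a genuine algebra homomorphism $\mathcal U_q(\hat{\mathfrak g})\to U_q(\hat{\mathfrak g})$, and being bijective it is in particular surjective. By the Proposition above the elements in \eqref{e:generators} generate $\mathcal U_q(\hat{\mathfrak g})$, so a homomorphism out of $\mathcal U_q(\hat{\mathfrak g})$ is determined by its values on them. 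I would therefore show that $\Psi^{-1}$ sends each generator in \eqref{e:generators} to the element prescribed by $\Phi$; this simultaneously proves that the assignment defining $\Phi$ extends to a well-defined homomorphism (namely $\Psi^{-1}$) and that $\Phi=\Psi^{-1}$, which is then an epimorphism.

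On the generators $x_i^{\pm}(0)$, $K_i^{\pm1}$, $\gamma^{\pm\frac12}$ and $D^{\pm1}$ the equality $\Psi^{-1}=\Phi$ is immediate from the definition of $\Psi$, which sends $e_i\mapsto x_i^{+}(0)$, $f_i\mapsto x_i^{-}(0)$ and fixes the Cartan-type generators. Only the two currents $x_1^{-}(1)$ and $x_1^{+}(-1)$ require work. For the first it is equivalent to check the identity $\Psi\bigl(\Phi(x_1^{-}(1))\bigr)=x_1^{-}(1)$ in $\mathcal U_q(\hat{\mathfrak g})$. Applying $\Psi$ to the proposed image and inserting $\Psi(e_0)=x_{\theta}^{-}(1)\,\gamma K_{\theta}^{-1}$ together with $\Psi(e_{i_k})=x_{i_k}^{+}(0)$ reduces the claim to
\begin{equation*}
\bigl[x_{i_2}^{+}(0),\dots,x_{i_{h-1}}^{+}(0),\,x_{\theta}^{-}(1)\,\gamma K_{\theta}^{-1}\bigr]_{({v'}_{i_{h-1}}^{-1},\dots,{v'}_{i_2}^{-1})}\,\gamma^{-1}K_1=x_1^{-}(1),
\end{equation*}
in which $x_{\theta}^{-}(1)$ is the nested bracket \eqref{e:invmap1}. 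Thus the task is to show that bracketing $x_{\theta}^{-}(1)$ in turn with $x_{i_{h-1}}^{+}(0),\dots,x_{i_2}^{+}(0)$ precisely undoes the construction \eqref{e:invmap1} and returns $x_1^{-}(1)$ up to the displayed Cartan factor.

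The mechanism is a layer-by-layer peeling driven by the $q$-bracket identities \eqref{v1}--\eqref{v2}. Each $x_{i_k}^{+}(0)$ is pushed into the nested expression for $x_{\theta}^{-}(1)$; by the $k=k'=0$ case of \eqref{e:Dr8} it collapses a matching factor $x_{i_k}^{-}(0)$ into the Cartan element $(K_{i_k}-K_{i_k}^{-1})/(q_{i_k}-q_{i_k}^{-1})$, while it commutes through every unmatched $x_{i_l}^{-}(0)$ and through $x_1^{-}(1)$ (again by \eqref{e:Dr8}, whose right-hand side vanishes off the diagonal), picking up only the scalar $q$-powers recorded in \eqref{e:Dr5}. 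The exponents entering $v'_{i_t}$ were defined along the chain \eqref{L:reduced} exactly so that these $q$-powers, together with the Cartan contributions, telescope to the single factor $\gamma^{-1}K_1$. Running this as an induction on the Coxeter number $h$ yields the displayed identity. The current $x_1^{+}(-1)$ is treated symmetrically, now starting from \eqref{e:invmap2} and $\Psi(f_0)=(\gamma^{-1}K_{\theta})\,x_{\theta}^{+}(-1)$ and using the left-nested angle-bracket form of \eqref{v1}--\eqref{v2}, with the roles of raising and lowering currents interchanged.

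The hard part is this bracket computation: tracking with care the $q$-powers produced when a single current is transported through a long nested commutator, and confirming that they collapse to the claimed Cartan factor with no residual terms (a genuine point when the root chain repeats an index, so that several $x_{i_k}^{-}(0)$ compete for cancellation). The two levers that make the telescoping close are the $q$-Jacobi identities \eqref{v1}--\eqref{v2} and the vanishing enforced by the quantum Serre relations \eqref{e:Dr11}; the explicit choices of $v_{i_t}$, $v'_{i_t}$ and $v''_{i_t}$ relative to the root chain are precisely what balances the bookkeeping at each stage.
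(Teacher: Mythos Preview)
The paper does not supply its own proof of this statement; it is quoted from \cite{JZ3} (proved there in the two-parameter setting and specialised). So there is no in-paper argument to compare against line by line. The surrounding text and the later theorem asserting that $\Phi$ and $\Psi$ are mutually inverse indicate that the intended route in \cite{JZ3} is to verify directly that $\Phi$ respects the Drinfeld relations on the simplified generating set, and only afterwards to match it with $\Psi^{-1}$. Your approach reverses the order: grant that $\Psi$ is an isomorphism, then identify $\Psi^{-1}$ with the assignment defining $\Phi$ on the generators \eqref{e:generators}. This is legitimate and more economical---no relations need to be rechecked---and it yields the stronger conclusion (isomorphism, hence in particular epimorphism) in a single pass.

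The entire content of your argument is therefore the bracket identity you display, and here the sketch is correct in outline but not closed. Two points need more than you give them. First, when the root chain repeats an index (as it does outside type $A$), the $q$-Jacobi expansion \eqref{v1} applied to $[x_{i_k}^{+}(0),\,[x_{i_k}^{-}(0),\,\cdot\,]_{v}]_{w}$ produces a second branch in which $x_{i_k}^{+}(0)$ skips the outermost $x_{i_k}^{-}(0)$ and lands on an inner copy; it is the Serre relations \eqref{e:Dr11}, applied inside the nest, that annihilate these residual terms, and you should exhibit at least once how they are invoked. Second, the factor $\gamma K_{\theta}^{-1}$ attached to $x_{\theta}^{-}(1)$ is not inert: commuting $x_{i_k}^{+}(0)$ past it via \eqref{e:Dr5} contributes precisely the $q$-power that turns the outer $v'^{-1}_{i_k}$-bracket into the inner $v_{i_k}$-bracket appearing in \eqref{e:invmap1}, and this is what makes the layer-by-layer telescoping honest. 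Writing out the single-step identity for $k=h-1$ would suffice to anchor the induction you describe; as it stands the mechanism is named but not demonstrated.
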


 We can now describe the Hopf structure of $\mathcal{U}_{q}(\hat{\mathfrak{g}})$ in terms of the simplified (Drinfeld) generators.

For $i_2\leq j_1<\cdots<j_l\leq i_{h-1}$, define $x_{1,\bold j}^-(1)$ and $x_{1,\bold j}^+(-1)$ inductively by
\begin{align*}
&x_{1\,\bold{j}}^-(1)=[\,x_{j_l}^-(0),\,\cdots, x_{j_1}^-(0),\,x_{1}^-(1)\,]_{(v_{j_1},\, \cdots,\, v_{j_l})},\\
%&x_{1\,l}^-(1)=[\,x_{j_l}^-(0),\,\cdots, x_{j_1}^-(0),\,x_{1}^-(1)\,]_{(v_{j_1},\, \cdots,\, v_{j_l})},\\
&x_{1\,\bold{j}}^+(-1)=[\,x_{1}^+(-1),\, x_{j_1}^+(0),\,\cdots ,\,x_{j_{l}}^+(0)\,]_{\langle v_{j_1}^{-1},\, \cdots,\, v_{j_l}^{-1}\rangle}.
%&x_{1\,l}^+(-1)=[\,x_{1}^+(-1),\, x_{j_1}^+(0),\,\cdots ,\,x_{j_{l}}^+(0)\,]_{\langle v_{j_1}^{-1},\, \cdots,\, v_{j_l}^{-1}\rangle}.
\end{align*}

Similarly we also define $x_{\bold j}^+(0)$ and $x_{\bold j}^-(0)$ for $i_2\leq j_1<\cdots<j_l\leq i_{h-1}$ as follows.
\begin{align*}
&x_{\bold j}^+(0)=[\,x_{j_1}^+(0),\, x_{j_2}^+(0),\,\cdots ,\,x_{j_{l}}^+(0)\,]_{({v'}_{j_{l-1}}^{-1},\, \cdots,\, {v'}_{j_1}^{-1})},\\
&x_{\bold j}^-(0)=[\,x_{j_l}^-(0),\, x_{j_{l-1}}^-(0),\,\cdots ,\,x_{j_{1}}^-(0)\,]_{\langle {v'}_{j_{l-1}},\, \cdots,\, {v'}_{j_1}\rangle}.
%&x_{j_1\,l}^+(0)=[\,x_{j_1}^+(0),\, x_{j_2}^+(0),\,\cdots ,\,x_{j_{l}}^+(0)\,]_{({v'}_{j_{l-1}}^{-1},\, \cdots,\, {v'}_{j_1}^{-1})},\\
%&x_{j_1,\,l}^-(0)=[\,x_{j_l}^-(0),\, x_{j_{l-1}}^-(0),\,\cdots ,\,x_{j_{1}}^-(0)\,]_{\langle {v'}_{j_{l-1}},\, \cdots,\, {v'}_{j_1}\rangle}.
\end{align*}
%where  $p_j=q^{(\alpha_{i_j}, \alpha_{i_{j+1}})}$, $j=2, \cdots, h-1$. % and $v_{i_j}=v'_ju''_j$.
%Now we can describe the comultiplication on the Drinfeld generators of $\mathcal{U}(\hat{\mathfrak{g}})$.

As before, we fixed the root chain $1=i_1,\,i_2,\, \ldots, i_{h-1}$ to the maximum root
$\theta$ given in (\ref{L:reduced}).
The comultiplication
$\Delta: \mathcal{U}_{q}(\hat{\mathfrak{g}})\longrightarrow \mathcal{U}_{q}(\hat{\mathfrak{g}})\otimes
\mathcal{U}_{q}(\hat{\mathfrak{g}})$ is explicitly given as follows:  for $i\in I$
\begin{gather*}
\Delta(K_i)=K_i\ot K_i, \quad
\Delta(\gamma^{\pm\frac{1}2})=\gamma^{\pm\frac{1}2}\otimes\gamma^{\pm\frac{1}2}, \quad
\Delta(D^{\pm1})=D^{\pm1}\otimes D^{\pm1},\\
\Delta(x_i^+(0))=x_i^+(0)\ot 1+K_i\ot x_i^+(0),\\
 \Delta(x_i^-(0))=x_i^-(0)\ot K_i^{-1}+1\ot
x_i^-(0),\\
\Delta(x_1^-(1))=x_1^-(1)\ot \gamma^{-1}K_1+1\ot x_1^-(1)+\sum_{i_2\leq j_1<\cdots<j_l\leq i_{h-1}}\xi_{\bold j}\, x_{1\,\bold j}^-(1)\ot x_{\bold j}^+(0)\gamma^{-1}K_1,\\
\Delta(x_1^+(-1))=x_1^+(-1)\ot 1+\gamma K_1^{-1}\ot x_1^+(-1)+\sum_{i_2\leq j_1<\cdots<j_l\leq i_{h-1}}\zeta_{\bold j}\, \gamma K_1^{-1}\, x_{\bold j}^-(0)\ot x_{1,\bold j}^+(-1),
\end{gather*}
where
\begin{align*}
\xi_{\bold j}&=(v'_{j_l}-{v'}^{-1}_{j_l})t_{j_2}^{-1}\cdots t_{j_l}^{-1}{v''}_{j_2}\cdots {v''}_{j_{l-1}},\\
\zeta_{\bold j}&=({v'}^{-1}_{j_l}-v'_{j_l})t_{j_2}^{-1}\cdots t_{j_l}^{-1}{v''}^{-1}_{j_2}\cdots {v''}^{-1}_{j_{l-1}},
\end{align*}
and $t'_{j_k}=\frac{v_{j_k}-v_{j_k}^{-1}}{q_{j_k}-q_{j_k}^{-1}}$. These formulas extend the action of the
original Drinfeld-Jimbo comultiplication from the Chevelley generators to Drinfeld's new generators, so
they are different from Drinfeld's infinite coproduct formulas (see \cite{DF, J2, G}).

\begin{prop}\, %Let $1=i_1,\,i_2,\, \cdots, i_{h-1}$ be a sequence of indices specified above.
The algebra $\mathcal{U}_{q}(\hat{\mathfrak{g}})$  is a Hopf algebra with the comultiplication $\Delta$ (defined above), counit $\vep$ and antipode $S$ given as follows: for $i\in I$
\begin{eqnarray*}
&&\varepsilon(x_i^+(0))=\varepsilon(x_i^-(0))=\varepsilon(x_i^+(-1))=\varepsilon(x_i^-(1))=0,\\
&& \varepsilon(K_i)=\varepsilon(D)=1,\quad S(K_i)=K_i^{-1}, \quad  S(D)=D^{-1}, \\
&&S(x_i^+(0))=-K_i^{-1}x_i^+(0),\qquad S(x_i^-(0))=-x_i^-(0)\,K_i,\\
&&S(x_1^+(-1))=-\gamma^{-1}{K_1}x_1^+(-1)-\sum_{i_2\leq j_1<\cdots<j_l\leq i_{h-1}}\zeta_{\bold j}\,\, y_{\bold j}^-(0)\gamma^{-1}{K_{j_1}}\cdots {K_{j_l}}\,x_{1,\bold j}^+(-1),\\
&&S(x_1^-(1))=-x_1^-(1){\gamma}{K_1}-\sum_{i_2\leq j_1<\cdots<j_l\leq i_{h-1}}\xi_{\bold j}\,\, x_{1\bold j}^-(1){\gamma}{K_{j_1}}^{-1}\cdots {K_{j_l}}^{-1}\, y_{\bold j}^+(0),
\end{eqnarray*}
where the sums run over all sequences $\bold j=(j_1,\cdots, j_l)$ such that $i_2\leq j_1<\cdots<j_l\leq i_{h-1}$ and
\begin{align*}
y_{\bold j}^-(0)&=a\,[\,x_{j_l}^-(0),\, x_{j_{l-1}}^-(0),\,\cdots ,\,x_{j_{1}}^-(0)\,]_{\langle p_{j_{l-1}},\, \cdots,\, p_{j_1}\rangle},\\
y_{\bold j}^+(0)&=a^{-1}\,[\,x_{j_1}^+(0),\, x_{j_2}^+(0),\,\cdots ,\,x_{j_{l}}^+(0)\,]_{({p}^{-1}_{j_{l-1}},\, \cdots,\, {p}^{-1}_{j_1})},
\end{align*}
%$$y_{j_1,\,l}^+(0)=a^{-1}\,[\,x_{j_1}^+(0),\, x_{j_2}^+(0),\,\cdots ,\,x_{j_{l}}^+(0)\,]_{({p}^{-1}_{{l-1}},\, \cdots,\, {p}^{-1}_{1})},$$
and for $1\leqslant t \leqslant l-1$, $p_{j_{t}}=q^{(\alpha_{j_t},\,\alpha_{j_{t+1}})}$. The constant $a$ is given by
$$a=\prod\limits_{t=1}^{l-1}q^{2(\alpha_{j_t}, \alpha_{j_{t+1}})+(\alpha_{j_t}, \alpha_{j_{t+2}})+\cdots+(\alpha_{j_t}, \alpha_{j_l})}.$$
\end{prop}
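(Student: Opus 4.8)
The plan is to transport the known Hopf structure of $U_{q}(\hat{\mathfrak{g}})$ (Proposition~\ref{p:1}) to $\mathcal{U}_{q}(\hat{\mathfrak{g}})$ along the Drinfeld isomorphism. The preceding theorems give an algebra isomorphism $\Psi\colon U_{q}(\hat{\mathfrak{g}})\to\mathcal{U}_{q}(\hat{\mathfrak{g}})$ together with its inverse $\Phi$, so I would define
\begin{gather*}
\Delta_{\mathcal U}=(\Psi\ot\Psi)\circ\Delta_U\circ\Phi,\qquad
\vep_{\mathcal U}=\vep_U\circ\Phi,\qquad
S_{\mathcal U}=\Psi\circ S_U\circ\Phi.
\end{gather*}
Each of these is a composite of algebra homomorphisms built from the corresponding operation on $U_{q}(\hat{\mathfrak{g}})$, and since $\Psi$ is invertible, the quadruple $(\mathcal{U}_{q}(\hat{\mathfrak{g}}),\Delta_{\mathcal U},\vep_{\mathcal U},S_{\mathcal U})$ inherits coassociativity, the counit identities and the antipode axiom from $U_{q}(\hat{\mathfrak{g}})$ automatically. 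Hence no Hopf axiom has to be checked by hand: the whole content of the proposition is that these transported maps coincide with the explicit formulas in the statement. As both $\Delta_{\mathcal U}$ and the map prescribed by the stated formulas are algebra homomorphisms, and the elements in \eqref{e:generators} generate $\mathcal{U}_{q}(\hat{\mathfrak{g}})$, it suffices to check agreement on these generators; this simultaneously shows that the prescribed formulas are consistent with the defining relations.

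For the generators $K_i,\,D,\,\gamma^{\pm\frac12}$ and $x_i^{\pm}(0)$ agreement is immediate from Proposition~\ref{p:1} and the assignments $\Psi(e_i)=x_i^{+}(0)$, $\Psi(f_i)=x_i^{-}(0)$, $\Psi(K_i)=K_i$, $\Psi(D)=D$, $\Psi(\gamma^{\pm\frac12})=\gamma^{\pm\frac12}$. For example $\Delta_U(e_i)=e_i\ot1+K_i\ot e_i$ yields $\Delta_{\mathcal U}(x_i^{+}(0))=x_i^{+}(0)\ot1+K_i\ot x_i^{+}(0)$, and likewise $\Delta_{\mathcal U}(x_i^{-}(0))=x_i^{-}(0)\ot K_i^{-1}+1\ot x_i^{-}(0)$; the counit and antipode on these generators are read off in the same way.

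The real work is the pair $x_1^{-}(1)$, $x_1^{+}(-1)$. Writing $F=[\,e_{i_2},e_{i_3},\ldots,e_{i_{h-1}},e_0\,]_{({v'}_{i_{h-1}}^{-1},\ldots,{v'}_{i_2}^{-1})}$, the inverse map gives $\Phi(x_1^{-}(1))=F\,\gamma^{-1}K_1$, so that
\[
\Delta_{\mathcal U}(x_1^{-}(1))=(\Psi\ot\Psi)\big(\Delta_U(F)\big)\,(\gamma^{-1}K_1\ot\gamma^{-1}K_1).
\]
Since $\Delta_U$ is an algebra homomorphism with $\Delta_U(e_{i_k})=e_{i_k}\ot1+K_{i_k}\ot e_{i_k}$ and $\Delta_U(e_0)=e_0\ot1+K_0\ot e_0$, I would evaluate $\Delta_U(F)$ by induction on the length of the nested bracket, at each step applying the bracket identities \eqref{v1}--\eqref{v2} and the commutation of the group-like $K_{i_k}$ with the $e_j$ to push all group-like factors to the left of the second tensor slot and to recombine the remaining pieces. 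The result is a sum indexed by the increasing subsequences $i_2\leq j_1<\cdots<j_l\leq i_{h-1}$ whose first tensor factor collapses into a partial lowering bracket and whose second factor collapses into a partial raising bracket; after applying $\Psi\ot\Psi$ these are exactly $x_{1\,\mathbf{j}}^{-}(1)$ and $x_{\mathbf{j}}^{+}(0)$ as defined before the statement, and matching the accumulated powers of $q$ against the numbers $v_{j_t},v'_{j_t},v''_{j_t}$ produces the coefficient $\xi_{\mathbf{j}}$. The expansion of $\Delta_{\mathcal U}(x_1^{+}(-1))$ is the mirror computation starting from $\Phi(x_1^{+}(-1))=\gamma K_1^{-1}\,[\,f_0,f_{i_{h-1}},f_{i_{h-2}},\ldots,f_{i_2}\,]_{\langle v'_{i_{h-1}},\ldots,v'_{i_2}\rangle}$, which yields the coefficients $\zeta_{\mathbf{j}}$.

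Finally, the antipode formulas arise the same way: applying the algebra anti-homomorphism $S_U$, with $S_U(e_i)=-K_i^{-1}e_i$ and $S_U(f_i)=-f_iK_i$, to the bracket presentations of $\Phi(x_1^{\pm}(\mp1))$ reverses the order of the factors and reassembles them into the brackets $y_{\mathbf{j}}^{\pm}(0)$, with the scalar $a$ and the parameters $p_{j_t}$ emerging from the same power-of-$q$ bookkeeping; alternatively one may check directly that the stated $S$ satisfies $m\circ(S\ot\mathrm{id})\circ\Delta=\eta\circ\vep=m\circ(\mathrm{id}\ot S)\circ\Delta$ using the $\Delta$ just computed. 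I expect the entire difficulty to lie in this explicit evaluation of $\Delta_U$ and $S_U$ on the long iterated $q$-commutators, and above all in the bookkeeping showing that the partial brackets recombine into precisely $x_{1\,\mathbf{j}}^{\pm}$, $x_{\mathbf{j}}^{\pm}(0)$ and $y_{\mathbf{j}}^{\pm}(0)$ with the exact coefficients $\xi_{\mathbf{j}},\zeta_{\mathbf{j}},a$. The bracket identities \eqref{v1}--\eqref{v2} are the tool that governs this bookkeeping, and isolating a clean one-step rule for the comultiplication of such a nested bracket is what makes the coefficient tracking manageable.
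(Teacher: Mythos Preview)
Your proposal is correct and follows precisely the approach the paper takes: the paper does not give a standalone proof here but states the result as having been established in \cite{JZ3}, and the surrounding text together with the subsequent theorem make clear that the Hopf structure on $\mathcal{U}_{q}(\hat{\mathfrak{g}})$ is exactly the transport of the Drinfeld--Jimbo structure along $\Psi$, so that the entire content is the explicit evaluation of $\Delta_U$ and $S_U$ on the iterated $q$-brackets defining $\Phi(x_1^{\pm}(\mp1))$. Your identification of the inductive bracket expansion via \eqref{v1}--\eqref{v2} as the governing mechanism, and of the coefficient bookkeeping as the only genuine work, matches what is done in \cite{JZ3}.
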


\begin{theo}\, The morphisms $\Phi$ and $\Psi$ are two coalgebra homomorphisms:
 $$\Delta \circ \Psi=(\Psi\otimes \Psi)\Delta, \qquad\Delta \circ \Phi=(\Phi\otimes \Phi)\Delta.$$
 Moreover $\Phi$ and $\Psi$ are mutually inverse to each other.
 %, thus $\mathcal{U}_{q}(\hat{\mathfrak{g}})$ and $U_{q}(\hat{\mathfrak{g}})$ are isomorphic Hopf algebras.
\end{theo}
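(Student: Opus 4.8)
The plan is to prove the final theorem in two logically separate pieces: first that $\Phi$ and $\Psi$ are coalgebra homomorphisms, and second that they are mutually inverse bijections. For the inverse statement, I would observe that $\Psi$ is already established as an algebra isomorphism and $\Phi$ as an algebra epimorphism in the two earlier theorems. It therefore suffices to verify that $\Phi\circ\Psi=\mathrm{id}$ on a generating set, since both are algebra maps and agree on generators implies they agree everywhere. I would check the action on the Chevalley generators $e_i,f_i,K_i,\gamma,D$ of $U_q(\hat{\mathfrak{g}})$: for $i\in I_0$ the composite clearly fixes $e_i\mapsto x_i^+(0)\mapsto e_i$ and similarly for $f_i,K_i$, while the central and degree generators are trivially fixed. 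The genuinely substantive check is on $e_0$ and $f_0$, which involve the root-vector expressions $x_\theta^-(1),x_\theta^+(-1)$; here I would use formulas (\ref{e:invmap1})--(\ref{e:invmap2}) together with the definitions of $\Phi(x_1^-(1))$ and $\Phi(x_1^+(-1))$ and the bracket identities (\ref{v1})--(\ref{v2}) to collapse the nested quantum commutators back to $e_0$. Once $\Phi\circ\Psi=\mathrm{id}$ is known and $\Psi$ is a bijection, it follows formally that $\Phi=\Psi^{-1}$ and hence that $\Phi$ is an isomorphism too.

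For the coalgebra homomorphism property, the plan is to verify $\Delta\circ\Psi=(\Psi\otimes\Psi)\circ\Delta$ and $\Delta\circ\Phi=(\Phi\otimes\Phi)\circ\Delta$ on generators, exploiting that both $\Delta$ on $U_q(\hat{\mathfrak{g}})$ (Proposition \ref{p:1}) and $\Delta$ on $\mathcal{U}_q(\hat{\mathfrak{g}})$ are algebra homomorphisms into the respective tensor-square algebras. Because everything in sight is multiplicative, it is enough to confront the two sides on the Chevalley generators $e_i,f_i,K_i,\gamma,D$ (for $\Psi$) and on the simplified Drinfeld generators $x_i^\pm(0),x_1^\pm(\mp1),K_i,\gamma,D$ (for $\Phi$). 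For the toral, central and grading generators both coproducts are grouplike-type, so the identity is immediate. For $e_i,f_i$ with $i\in I_0$ the check matches the first-level Drinfeld coproduct formulas $\Delta(x_i^+(0))=x_i^+(0)\otimes1+K_i\otimes x_i^+(0)$ against the Drinfeld--Jimbo coproduct, and is routine. The only hard cases are the affine node generators $e_0,f_0$ versus $x_\theta^\mp(\pm1)$ and, dually, $x_1^\mp(\pm1)$, where the summation over chains $i_2\le j_1<\cdots<j_l\le i_{h-1}$ in the coproduct of $x_1^\mp(\pm1)$ must be reconciled with the iterated Drinfeld--Jimbo coproduct of a long bracket.

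The main obstacle, I expect, is precisely this last compatibility on $e_0$/$x_1^-(1)$ (and its mirror on $f_0$/$x_1^+(-1)$): one must show that applying the group-like/skew-primitive Drinfeld--Jimbo coproduct to the nested bracket defining $\Phi(x_1^-(1))$ reproduces, term by term, the multi-term coproduct $\Delta(x_1^-(1))$ with its coefficients $\xi_{\mathbf j}$, and in particular that the scalars $v_{j_t},v'_{j_t},v''_{j_t}$ emerging from repeatedly applying Proposition \ref{p:1} through the bracket identities (\ref{v1})--(\ref{v2}) assemble exactly into the stated $\xi_{\mathbf j}$. The cleanest way to manage this is by induction on the length $h-1$ of the root chain: I would set up an auxiliary claim computing $\Delta$ of the partial brackets $x_{1\,\mathbf j}^-(1)$ and $x_{\mathbf j}^+(0)$, then show the inductive step in which adjoining one more simple-root generator $x_{j}^-(0)$ via a $q$-bracket interacts with the coproduct through (\ref{v1})--(\ref{v2}), tracking how the subsets $\mathbf j$ and their scalar weights propagate. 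Since $\Psi$ and $\Phi$ are mutually inverse algebra maps, verifying the coalgebra identity for either one of them automatically yields it for the other, which roughly halves the bookkeeping; I would carry out the computation for $\Phi$ and transport it to $\Psi$ by composing with the known inverse relation.
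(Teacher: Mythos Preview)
Your plan is sound, and it correctly identifies both the easy and the hard parts of the argument. A few remarks:

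\medskip

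\textbf{Comparison with the paper.} The paper does not supply a proof of this theorem: it is stated as a result carried over from \cite{JZ3}, and the surrounding text simply records the explicit comultiplication formulas and then asserts the theorem. So there is no ``paper's own proof'' to compare against in detail. What the paper does say, just before the theorem, is that the formulas for $\Delta(x_1^\pm(\mp1))$ are obtained by \emph{extending the action of the original Drinfeld--Jimbo comultiplication from the Chevalley generators to the Drinfeld generators}. In other words, the comultiplication on $\mathcal U_q(\hat{\mathfrak g})$ is, by construction, the transport of the Drinfeld--Jimbo $\Delta$ along the isomorphism $\Psi$; the content of the theorem is then that the explicit closed formulas displayed for $\Delta(x_1^-(1))$ and $\Delta(x_1^+(-1))$ really are what this transport gives. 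Your induction on the length of the root chain, unwinding $\Delta$ through the nested $q$-brackets via \eqref{v1}--\eqref{v2}, is exactly the computation needed to verify this, and is the method used in \cite{JZ3}.

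\medskip

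\textbf{On the mutual inverse part.} Your strategy---check $\Phi\circ\Psi=\mathrm{id}$ on the Chevalley generators and then invoke that $\Psi$ is already an isomorphism---is correct and efficient. You are right that the only nontrivial check is at the affine node $e_0,f_0$, and that this amounts to collapsing the nested bracket $\Phi(x_\theta^-(1))$ back to $e_0 K_0^{-1}$ (up to the appropriate scalar) using the bracket identities.

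\medskip

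\textbf{On the coalgebra part.} Your observation that once $\Phi=\Psi^{-1}$ is established, the coalgebra compatibility for one map implies it for the other, is correct and does halve the work. The inductive bookkeeping on the scalars $\xi_{\mathbf j}$ is indeed the crux; be careful that the induction simultaneously tracks both tensor factors (the partial brackets $x_{1\,\mathbf j}^-(1)$ on the left and the partial root vectors $x_{\mathbf j}^+(0)$ on the right), since adjoining one more $x_j^-(0)$ by a $q$-bracket produces cross terms via $\Delta(x_j^-(0))=x_j^-(0)\otimes K_j^{-1}+1\otimes x_j^-(0)$ that feed both factors.
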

\begin{remark} This result
upgrades the algebra isomorphism into a Hopf algebra isomorphism (cf. \cite{D1}) between the Drinfeld-Jimbo definition and the Drinfeld realization. In viewing of \cite{GM, GTL} it also provides a comultiplication formula for
the Yangian algebra associated to any simple Lie algebra in terms of the Drinfeld generators. The main terms of
the comultiplication formula were known for untwisted quantum affine algebras
in \cite[Th. 2.3]{JKK} and for twisted quantum affine algebras
in \cite[Th. 2.2]{JM}.

As our comultiplication is determined by the usual Drinfeld-Jimbo comultiplication, it is compatible with
the $q$-characters \cite{FR1, FM}.
 \end{remark}

\section{Quantum toroidal algebra $U_q(sl_{n+1},tor)$}
We now generalize the Hopf algebra structure to the (one-parameter) quantum toroidal algebra. We focus on
type $A$ \cite{GKV, VV}, though similar results
are expected for ADE types \cite{FJW} and the two-parameter case %quantum toroidal algebras
\cite{JZ1}. However, our construction seems not work for the
quantum toroidal algebra associated to $gl(1)$ (\cite{FJM1, FJM2, MO}).

Fix the integer $n\geq 1$, and let $q$ be a generic complex number. The quantum toroidal algebra $U_q(sl_{n+1},tor)$ is an associative algebra over $\mathbb C(q)$ with generators $x_i^{\pm}(k),\, a_i(r),\, K_i^{\pm}$, $\gamma^{\pm\frac{1}{2}},
q^{\pm d_1},$   $q^{\pm d_2}$, $(i\in I,\, k\in\mathbb{Z},\,r\in \mathbb{Z}/\{0\})$ satisfying relations
\eqref{e:Dr3}, \eqref{e:Dr6}, \eqref{e:Dr7}, \eqref{e:Dr8}, %\eqref{e:Dr9}, \eqref{e:Dr10},
\eqref{e:Dr11}
and the following additional relations:
\begin{align}\label{e:comm0} %3.1
&\gamma^{\pm\frac{1}2} \textrm{are central such that,}\ \gamma^{\pm \frac{1}{2}}\gamma^{\mp \frac{1}{2}}=1,\,
  K_i^{\pm1}, a_j(r), \textrm{and $q^{\pm d_i}$ commute among each other},\\ \label{e:comm00} %3.2
%&q^{\pm d_i}q^{\mp d_j}=q^{\mp d_j}q^{\pm d_i},\\ \label{e:comm0a} %3.3
%&\textrm{$q^{\pm d_i}, K_i^{\pm}$ commute with each other},\\
&K_ix_j^{\pm}(k)K_i^{-1}=q^{\pm a_{ij}}x_j^{\pm}(k),\\\label{e:comm0b}
&q^{d_1}a_i(r)q^{-d_1}=q^ra_i(r), \quad q^{d_1}x^{\pm}_i(k)q^{-d_1}=q^kx^{\pm}_i(k),\\ \label{e:comm0c}
&q^{d_2}a_i(r)q^{-d_2}=a_i(r), \quad q^{d_2}x^{\pm}_i(k)q^{-d_2}=q^{\delta_{i0}}x^{\pm}_i(k).
\end{align}
%\label{e:comm1} %3.6{e:Dr3}
%&[\,a_i(r),a_j(s)\,]
%=\delta_{r+s,0}\frac{[\,r\,a_{ij}\,]}{r}
%\frac{\gamma^{r}-\gamma^{-r}}{q-q^{-1}},\\ \label{e:comm2} %3.7{e:Dr6}
%&[\,a_i(r),x_j^{\pm}(k)\,]=\pm \frac{[\,r\,a_{ij}\,]}{r}
%\gamma^{\mp\frac{|r|}{2}}x_j^{\pm}(r{+}k),\\ \label{e:comm3} %3.8{e:Dr7}
%&[\,x_i^{\pm}(k+1),\,x_j^{\pm}(l)\,]_{q^{\pm (\alpha_i, \alpha_j)}}+
%[\,x_j^{\pm}(l+1),\,x_i^{\pm}(k)\,]_{q^{\pm (\alpha_i, \alpha_j)}}=0, \\ \label{e:comm4} %3.9{e:Dr8}
%&[\,x_i^{+}(k),\,x_j^{-}(l)\,]=\delta_{ij}\frac{\gamma^{\frac{k-l}{2}}\phi_i(k+l)-\gamma^{\frac{l-k}{2}}\varphi_i(k+l)}{q-q^{-1}},
%\end{align}
%where $\phi_i(r)$, and $\varphi_i(-r)\, (r\geq 0)$ %such that $\phi_i(0)=K_i$ and  $\varphi_i(0)=K_i^{-1}$
%are defined by:
%\begin{gather*}\sum\limits_{m=0}^{\infty}\phi_i(r) z^{-r}=K_i \exp \Big(
%(q{-}q^{-1})\sum\limits_{\ell=1}^{\infty}
% a_i(r)z^{-\ell}\Big), \\
%\sum\limits_{r=0}^{\infty}\varphi_i(-r) z^{r}=K_i^{-1}\exp
%\Big({-}(q{-}q^{-1})
%\sum\limits_{\ell=1}^{\infty}a_i(-\ell)z^{\ell}\Big),
%\end{gather*}
%\begin{align}\label{e:comm5}       %{e:Dr11} just for a_{ij}=-1
%x_i^{\pm}(k_1)x_i^{\pm}(k_2)x_j^{\pm}(l)-[2]x_i^{\pm}(k_1)x_j^{\pm}(l)x_i^{\pm}(k_2)+
%x_j^{\pm}(l)x_i^{\pm}(k_1)x_i^{\pm}(k_2)\\
%\hspace{5cm} +(k_1\leftrightarrow k_2)=0,\quad \textit{for} \quad a_{ij}=-1. \nonumber
%\end{align}
%Here $[m]=\frac{q^m-q^{-m}}{q-q^{-1}}$, %[x,y]_q=xy-qyx$,
%$A=(a_{ij})$ is the Cartan matrix of type $A_n^{(1)}$.

Let $\tau$ be the map of $\mathbb Z_{n+1}$ given by $\tau(i)=i+1$.
Then $\tau$ induces the diagram automorphism for the derived quantum toroidal algebra
$U_q'(sl_{n+1},tor)$, the subalgebra without the generator $q^{d_2}$, such that %\longrightarrow U_q'(sl_{n+1},tor)$ given by
\begin{align}
\tau(x_i^{\pm}(k))=x_{i+1}^{\pm}(k), \quad \tau(a_i(k))=a_{i+1}(k).
\end{align}

Let
$U_{q}({\mathfrak{n}}^\pm)$ be the subalgebra of $U_q(sl_{n+1},tor)$
generated by $x_i^\pm(k)$ ($i\in I$) respectively, and
$U_{q}^0({\mathfrak{g}})$ be the subalgebra of $U_q(sl_{n+1},tor)$
generated by $K_i^{\pm1}$, $\gamma^{\pm\frac1{2}}, q^{\pm d_1}, q^{\pm d_2}$ and $a_i(\pm r)$ for $i\in I$, $r\in \mathbb{N}$.

Using Lemma 3.3 of \cite{M1}, it is easy to see that as a vector space (cf. \cite{M2})
$$U_q(sl_{n+1},tor)\simeq
U_{q}({\mathfrak{n}}^-)\otimes U_{q}^0({\mathfrak{g}})
\otimes U_{q}({\mathfrak{n}}^{+}).$$

\section{The algebra $\mathcal{U}_0$}

%We first discuss a general method to extract a smaller set
%of generators for an associative algebra with generators and relations.
We denote by $\langle x_i\rangle$ the free associative algebra generated by $x_i$. We first introduce
the algebra $\mathcal{U}_0$ to
%The ground field is assumed to be the field $\mathbb C(q)$. The following result gives a method to
extract a smaller set of generators from  the quantum toroidal algebra. %a given associative algebra.
%\begin{lemm}\label{L:iso} Suppose $\mathcal A=\langlex_i\rangle/(R_1) $, $\mathcal B=\langle x_i, y_j\rangle/(R_1, R_2, R_{12})$ are two
%quotient algebras of the free associative algebras with respective relations $R_1=R_1(x_i), R_2=R_2(y_j), R_{12}=R_{12}(x_i, y_j)$.
%If $y_j=\langle x_i \rangle$ insider $\mathcal B$, and $R_2(y_j)\subset ( R_1)$, $R_{12}(x_i, y_j)\subset (R_1)$,
%then $\mathcal A\simeq \mathcal B$ as associative algebras.
%\end{lemm}
%\begin{proof} We define the map $\Phi: \mathcal A=\langlex_i\rangle/(R_1) \longrightarrow \mathcal B=\langle x_i, y_j\rangle/(R_1, R_2, R_{12})$ by
%sending $x_i\mapsto x_i$. Clearly $\Phi$ is an algebra homomorphism.
%
%On the other hand, we define the map $\Psi: \mathcal B=\langle x_i, y_j\rangle/(R_1, R_2, R_{12})\longrightarrow \mathcal A=\langle x_i\rangle/(R_1)$ by
%sending $x_i\mapsto x_i$, $y_j\mapsto \langle x_i \rangle$. Then $\Psi$ is also an algebra homomorphism.
%
%Clearly $\Psi\Phi(x_i)=\Psi(x_i)=x_i$, so $\Psi\Phi=Id$. As for the other direction, it is clear that $\Phi\Psi(x_i)=\Phi(x_i)=x_i$.
%Also $\Phi\Psi(y_j)=\Phi(\langle x_i \rangle)=\langle x_i \rangle=y_j$, hence $\Phi=\Psi^{-1}$.
%\end{proof}
%
%We consider $\{ x_i, y_j\}$ to the set of the whole Drinfeld generators in the quantum toroidal algebra, and now define
%a simplified set of generators $\{x_i\}$ in the following key definition.

\begin{defi} \label{4.2}The algebra $\mathcal{U}_0$ is an associative algebra over $\mathbb C(q)$ generated by
$x_i^{\pm}(0),\,x_0^+(-1)$, $x_0^-(1)$, $K_i^{\pm1}$ ($i\in I$), $q^{\pm d_1}, q^{\pm d_2}$
 and $\gamma^{\pm\frac{1}2}$ satisfying the following relations \eqref{f:comm0}-\eqref{f:comm7}, i.e.
$$
{\mathcal U}_0:=\left.\left\langle\, x_i^{\pm}(0), x_0^+(-1),\,x_0^-(1),\,K_i^{\pm1},\,q^{\pm d_1},\, q^{\pm d_2},\,
\gamma^{\pm\frac{1}2}\; \right| i\in I\;\right\rangle/\sim.
$$
%Here we only list the key relations just involving in the above finite generators for our purpose.
\begin{align}\label{f:comm0} %\label{f:comm00} %3.2 %3.1
&\gamma^{\pm\frac{1}2} \textrm{are central such that,}\ \gamma^{\pm \frac{1}{2}}\gamma^{\mp \frac{1}{2}}=1,\,  K_i^{\pm1}, q^{\pm d_i}  %[\,K_i^{\pm 1},a_j(r)\,]=0,
\textrm{and $K_i^{\pm}$ commute with each other},\\
%&q^{\pm d_i}q^{\mp d_j}=q^{\mp d_j}q^{\pm d_i}, \ \ i, j=1, 2,\\ \label{f:comm0a} %3.3
%&\textrm{$q^{\pm d_i}, K_i^{\pm}$ commute with each other},\\
&K_ix_j^{\pm}(0)K_i^{-1}=q^{\pm a_{ij}}x_j^{\pm}(0),\quad K_ix_0^{\epsilon}(-\epsilon)K_i^{-1}=q^{\pm a_{i0}}x_0^{\epsilon}(-\epsilon),\\
&%q^{d_1}a_i(r)q^{-d_1}=q^ra_i(r), \quad
q^{d_1}x^{\pm}_i(0)q^{-d_1}=x^{\pm}_i(0),\quad q^{d_1}x^{\epsilon}_0(-\epsilon)q^{-d_1}=q^{-\epsilon}x^{\epsilon}_0(-\epsilon),\\
&%q^{d_2}a_i(r)q^{-d_2}=a_i(r), \quad
q^{d_2}x^{\pm}_i(0)q^{-d_2}=q^{\delta_{i0}}x^{\pm}_i(0),\quad q^{d_2}x^{\epsilon}_0(-\epsilon)q^{-d_2}=qx^{\epsilon}_0(-\epsilon),\\ %\label{f:comm1} %3.6
%&[\,a_i(r),a_j(s)\,]
%=\delta_{r+s,0}\frac{[\,r\,a_{ij}\,]}{r}
%\frac{\gamma^{r}-\gamma^{-r}}{q-q^{-1}}p^{rb_{ij}},\\ \label{e:comm2} %3.7
%&[\,a_i(r),x_j^{\pm}(k)\,]=\pm \frac{[\,r\,a_{ij}\,]}{r}
%\gamma^{\mp\frac{|r|}{2}}x_j^{\pm}(r{+}k)p^{rb_{ij}},\\
\label{f:comm3} %3.8
&[\,x_0^{\epsilon}(-\epsilon),\,x_0^{\epsilon}(0)\,]_{q^{-2}}=0,\\ \label{f:comm4}%3.9
%& p^{b_{0i}}[\,x_0^{\epsilon}(-\epsilon),\,x_i^{\pm}(0)\,]_{q^{\pm (\alpha_0, \alpha_i)}}+
%[\,x_0^{\epsilon}(-\epsilon),\,x_i^{\pm}(0)\,]_{q^{\pm (\alpha_i, \alpha_j)}}=0, \\
&[\,x_i^{+}(0),\,x_j^{-}(0)\,]=\delta_{ij}\frac{K_i-K_i^{-1}}{q-q^{-1}},\qquad [\,x_0^{+}(-1),\,x_0^{-}(1)\,]=\frac{\gamma^{-1}K_0-\gamma K_0^{-1}}{q-q^{-1}},\\
 &[\,x_i^{-\epsilon}(0),\,x_0^{\epsilon}(-\epsilon)\,]=0,
 %\qquad
% [\,x_0^{+}(-1),\,x_i^{-}(0)\,]=0,
\quad \hbox{for} \quad i\neq 0,\\
%\end{align}
%\begin{align}\label{f:comm5a}
&[x_i^{\pm}(0), x_j^{\pm}(0)]=0, \qquad [x_0^{\epsilon}(-\epsilon), x_k^{\epsilon}(0)]=0, \quad\mbox{for}\ \  a_{ij}=a_{0k}=0,\\
\label{f:comm5}
%&[\,x_0^{\epsilon}(-\epsilon),\,[x_0^{\epsilon}(-\epsilon),\,[\,x_0^{\epsilon}(-\epsilon),\,x_0^{-\epsilon}(0)\,]_{1}]_{q^{2}}]_{q^4}=0,\\
%&[\,x_0^{\epsilon}(0),\,[x_0^{\epsilon}(0),\,[\,x_0^{\epsilon}(0),\,x_0^{-\epsilon}(\epsilon)\,]_{1}]_{q^{-2}}]_{q^{-4}}=0,\\
&x_i^{\pm}(0)x_i^{\pm}(0)x_j^{\pm}(0)-[2]x_i^{\pm}(0)x_j^{\pm}(0)x_i^{\pm}(0)
+x_j^{\pm}(0)x_i^{\pm}(0)x_i^{\pm}(0)=0, \qquad\mbox{$a_{ij}=-1$},\\ \label{f:comm6}
&x_j^{\epsilon}(0)x_j^{\epsilon}(0)x_0^{\epsilon}(-\epsilon)-[2]x_j^{\epsilon}(0)x_0^{\epsilon}(-\epsilon)x_j^{\epsilon}(0)+
x_0^{\epsilon}(-\epsilon)x_j^{\epsilon}(0)x_j^{\epsilon}(0)=0, \qquad\mbox{$a_{0j}=-1$},\\ \label{f:comm7}
&x_0^{\epsilon}(-\epsilon)x_0^{\epsilon}(0)x_j^{\epsilon}(0)-[2]x_0^{\epsilon}(-\epsilon)x_j^{\epsilon}(0)x_0^{\epsilon}(0)+
x_j^{\epsilon}(0)x_0^{\epsilon}(-\epsilon)x_0^{\epsilon}(0) \\ \nonumber
&+x_0^{\epsilon}(0)x_0^{\epsilon}(-\epsilon)x_j^{\epsilon}(0)-[2]x_0^{\epsilon}(0)x_j^{\epsilon}(0)x_0^{\epsilon}(-\epsilon)+
x_j^{\epsilon}(0)x_0^{\epsilon}(0)x_0^{\epsilon}(-\epsilon)=0, \qquad\mbox{$a_{0j}=-1$},
\end{align}
where $\epsilon=\pm $ or $\pm 1$, $A=(a_{ij})$ the Cartan matrix of $A_n^{(1)}$.
%and $b_{ij}=\delta_{i, j-1}-\delta_{i, j+1}, i, j\in I=\mathbb Z_{n+1}=\{0, 1, \ldots, n\}$.
%are the entries of the antisymmetric matrix associated to $A_n^{(1)}$.
\end{defi}

%In fact, the imaginary quantum root vectors $a_i(l)$ in $U_q(sl_{n+1},tor)$ can be generated by the above finite generators.
It is clear that $\mathcal U_0$ is finitely generated with finitely many relations, and has
fewer generators and simpler relations than Drinfeld's original form. %Through several steps
We will show that $\mathcal U_0$
is isomorphic to a quotient algebra of $U_q(sl_{n+1},tor)$. Observe that $\mathcal U_0$ has a Chevelley anti-involution $\iota: x_i^{\pm}(k)\mapsto x_i^{\mp}(-k)$,
$K_i\mapsto K_i^{-1}$,
$q^{d_1}\mapsto q^{-d_1}$, $q^{d_2}\mapsto q^{d_2}$, $\gamma\mapsto \gamma^{-1}$ and $q\mapsto q^{-1}$ over the complex field.

%We now start to show that the algebra $\mathcal U_0$ is isomorphic to the quantum toroidal algebra
%using .

\begin{theo}\label{newpre}\, There exists an epimorphism $\pi: {\mathcal U}_0 \longrightarrow U_q(sl_{n+1},tor)$ such that $\pi$ is
identity on the set of
%$\pi(x)=x$, here $x$ in the set of
generators of ${\mathcal U}_0$ in Definition \ref{4.2}. That is, ${\mathcal U}_0/ker\pi\cong U_q(sl_{n+1},tor)$.
\end{theo}

 To prove Theorem \ref{newpre}, we need to check $\pi$
 can be extended from the generating set to the whole algebra and preserves all the relations.
 %.is a surjective algebra homomorphism.
 To this end,
 we introduce the following elements:
\begin{gather}\label{a1}
a_0(1)=K_0^{-1}\gamma^{1/2}\,\bigl[\,x_0^+(0),\,x_0^-(1)\,\bigr]\in
{\mathcal U}_0,\\\label{a2}
a_0(-1)=K_0{\gamma}^{-1/2}\,\bigl[\,x_0^+(-1),\,x_0^-(0)\,
\bigr]\in
{\mathcal U}_0,
\end{gather}
and use them to generate higher degree elements in a spiral way.
%based on Lemma \ref{L:iso}.
As $\iota(a_0(\pm 1))=a_0(\mp 1)$ we only check half of the relations.
The following relations involving with  $a_0(\pm 1)$ %$\epsilon=\pm 1$
are clear.

\begin{prop}\label{p1} Using the above notations, the following relations are
compatible with the defining relations of the quantum toroidal algebra $U_q(sl_{n+1},tor)$ ($\epsilon=\pm$ or $\pm 1$):
\begin{align}
\label{b1}
&K_ia_0(\epsilon)K_i^{-1}=a_0(\epsilon),\qquad q^{d_1}a_0(\epsilon)q^{-d_1}=q^{\epsilon} a_0(\epsilon),
\qquad q^{d_2}a_0(\epsilon)q^{-d_2}=a_0(\epsilon),\\\label{b2}
&\bigl[\,a_0(\epsilon),\,x_0^{\epsilon}(-\epsilon)\,\bigr]=\epsilon [2]\gamma^{-\frac{\epsilon}{2}} x_0^{\epsilon}(0),\quad \bigl[\,a_0(-\epsilon),\,x_0^{\epsilon}(0)\,\bigr]=\epsilon [2]\gamma^{-\frac{\epsilon}{2}}x_0^{\epsilon}(-\epsilon),\\
\label{b3}
&\bigl[\,a_0(1), a_0(-1)\,\bigr]=[2]\frac{\gamma-\gamma^{-1}}{q-q^{-1}}.
\end{align}
\end{prop}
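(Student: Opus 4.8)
All three families follow by direct computation inside $\mathcal U_0$ from the defining relations \eqref{f:comm0}--\eqref{f:comm7} together with the definitions $\dot a_0(1)=K_0^{-1}\gamma^{1/2}[\dot x_0^+(0),\dot x_0^-(1)]$ and $\dot a_0(-1)=K_0\gamma^{-1/2}[\dot x_0^+(-1),\dot x_0^-(0)]$. Since the anti-involution $\iota$ satisfies $\iota(\dot a_0(\pm1))=\dot a_0(\mp1)$ and reverses every bracket, it suffices to verify one half of each symmetric pair; I will take $\epsilon=+1$ throughout and deduce the $\epsilon=-1$ statements by applying $\iota$.

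The relations \eqref{b1} are weight identities. I would conjugate $\dot a_0(1)$ by each of $K_i$, $q^{d_1}$, $q^{d_2}$; as these operators commute with $K_0$ and $\gamma$ is central, the eigenvalue of $\dot a_0(1)$ is the product of the eigenvalues of the two factors $\dot x_0^+(0)$ and $\dot x_0^-(1)$ recorded in the grading relations of $\mathcal U_0$. For $K_i$ the factors $q^{a_{i0}}$ and $q^{-a_{i0}}$ cancel, giving invariance; for $q^{d_1}$ and $q^{d_2}$ the eigenvalues of the two factors multiply to the asserted values $q^{\epsilon}$ and $1$. This step is routine.

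The substance of the proposition is \eqref{b2}. The plan is first to move $K_0^{-1}$ through $\dot x_0^+(-1)$ by $K_0\dot x_0^+(-1)=q^{2}\dot x_0^+(-1)K_0$, which rewrites the commutator as a $q^2$-twisted bracket of $C:=[\dot x_0^+(0),\dot x_0^-(1)]$:
\[
[\dot a_0(1),\dot x_0^+(-1)]=\gamma^{1/2}K_0^{-1}\bigl(C\dot x_0^+(-1)-q^2\dot x_0^+(-1)C\bigr).
\]
Expanding $C$ into monomials and reordering by the Serre-type $q$-commutation \eqref{f:comm3}, i.e.\ $\dot x_0^+(-1)\dot x_0^+(0)=q^{-2}\dot x_0^+(0)\dot x_0^+(-1)$, the genuinely cubic monomials cancel in pairs, while the remaining cubic difference collapses to a quadratic through the Cartan relation $[\dot x_0^+(-1),\dot x_0^-(1)]=\tfrac{\gamma^{-1}K_0-\gamma K_0^{-1}}{q-q^{-1}}$ of \eqref{f:comm4}. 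Commuting the surviving $K_0^{\pm1}$ back past $\dot x_0^+(0)$, the $K_0^{-1}$-terms cancel and the $K_0$-terms assemble with coefficient $\tfrac{q^2-q^{-2}}{q-q^{-1}}=[2]$, so that $[\dot a_0(1),\dot x_0^+(-1)]=[2]\gamma^{-1/2}\dot x_0^+(0)$. The main obstacle is precisely this bookkeeping: one must track the $K_0$-conjugation scalars carefully so that the genuinely cubic terms vanish and the residual scalar is exactly $[2]$. If preferred, the quantum Leibniz identities \eqref{v1}--\eqref{v2} can be used to organize the same cancellation without expanding monomials by hand.

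Finally, \eqref{b3} follows formally once \eqref{b2} is in hand. Using $\dot a_0(-1)=\gamma^{-1/2}K_0[\dot x_0^+(-1),\dot x_0^-(0)]$ and the fact that $\dot a_0(1)$ commutes with $K_0$ (the case $i=0$ of \eqref{b1}), I would apply the ordinary Leibniz rule to $[\dot a_0(1),[\dot x_0^+(-1),\dot x_0^-(0)]]$ and insert the two brackets supplied by \eqref{b2}, namely $[\dot a_0(1),\dot x_0^+(-1)]=[2]\gamma^{-1/2}\dot x_0^+(0)$ and $[\dot a_0(1),\dot x_0^-(0)]=-[2]\gamma^{1/2}\dot x_0^-(1)$. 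This leaves a combination of $[\dot x_0^+(0),\dot x_0^-(0)]$ and $[\dot x_0^+(-1),\dot x_0^-(1)]$, both evaluated by \eqref{f:comm4}; the $K_0^{\pm1}$ contributions cancel against the prefactor $\gamma^{-1/2}K_0$ and the scalars combine into $[2]\tfrac{\gamma-\gamma^{-1}}{q-q^{-1}}$. Each of \eqref{b1}--\eqref{b3} then matches the corresponding specialization of the toroidal relations \eqref{e:comm1}--\eqref{e:comm2} at $i=j=0$ (where $a_{00}=2$ and $b_{00}=0$), which is exactly the asserted compatibility.
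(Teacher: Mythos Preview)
Your proposal is correct and follows essentially the same approach as the paper: reduce to $\epsilon=+1$ via the Chevalley anti-involution and then compute directly from the defining relations of $\mathcal U_0$. The paper's own proof is in fact terser than yours---it simply asserts that \eqref{b1}--\eqref{b3} ``follow by direct computations''---so your sketch of the $K_0$-conjugation, the use of \eqref{f:comm3} to cancel cubic terms, and the Leibniz reduction for \eqref{b3} supplies exactly the details the paper omits.
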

Here the compatibility means that the map $\pi:$ $a_0(\epsilon)\mapsto a_0(\epsilon)$, $x_0^{\epsilon}(-\epsilon)\mapsto
x_0^{\epsilon}(-\epsilon)$ is a homomorphism. This meaning will be used repeatedly in the following.
\begin{proof}\,
Using the Chevalley anti-involution, we only need to show the relations for $\epsilon=1$ (viewed as $+$ in superscript).
\eqref{b1}-\eqref{b3} follow by direction computation.
\end{proof}

Furthermore, for $\epsilon=\pm$ or $\pm1$ we have
\begin{gather}\label{a3}
x_0^{\epsilon}(\epsilon)=[2]^{-1}\gamma^{\frac{\epsilon}{2}}\,\bigl[\,a_0(\epsilon),\,x_0^{\epsilon}(0)\,\bigr]\in
{\mathcal U}_0.
\end{gather}

In fact, using the above notations and \eqref{a3}, we have the following lemma.
\begin{lemm}\label{p2} One has that for $\epsilon=\pm 1$
\begin{align}\label{b4}
&\bigl[\,x_0^+(\epsilon),\,x_0^-(0)\,\bigr]=\gamma \bigl[\,x_0^+(0),\,x_0^-(\epsilon)\,\bigr],
%\label{b5}
%&\bigl[\,x_0^+(0),\,x_0^-(-1)\,\bigr]=\gamma^{-1} \bigl[\,x_0^+(-1),\,x_0^-(0)\,\bigr],
\end{align}
which are equivalent to
\begin{align}\label{b6}
&a_0(\pm 1)=\pm K_0^{\mp 1}\gamma^{\pm\frac{1}{2}}\bigl[\,x_0^{\pm}(0),\,\,x_0^{\mp}(\pm 1)\bigr]=\pm K_0^{\mp 1}\gamma^{\mp\frac{1}{2}} \bigl[\,x_0^{\pm}(\pm 1), x_0^{\mp}(0)\bigr].
%&a_0(1)=K_0^{-1}\gamma^{\frac{1}{2}}\bigl[\,x_0^+(0),\,x_0^-(1)\,\bigr]=K_0^{-1}\gamma^{-\frac{1}{2}} \bigl[\,x_0^+(1),\,x_0^-(0)\,\bigr]\\
%\label{b7}
%&a_0(-1)=K_0\gamma^{-\frac{1}{2}}\bigl[\,x_0^+(-1),\,x_0^-(0)\,\bigr]=K_0\gamma^{\frac{1}{2}} \bigl[\,x_0^+(0),\,x_0^-(-1)\,\bigr]
\end{align}
\end{lemm}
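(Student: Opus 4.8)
The plan is to reduce everything to the single commutator identity \eqref{b4} and to prove that identity by one application of the Jacobi identity. First I would observe that the two displayed expressions in \eqref{b6} are obtained from \eqref{b4} together with the defining relations \eqref{a1} and \eqref{a2} of $\dot a_0(\pm1)$ by multiplying through by the central, group-like factors $K_0^{\mp1}\gamma^{\pm\frac12}$; for the lower sign one also uses $[\dot x_0^-(0),\dot x_0^+(-1)]=-[\dot x_0^+(-1),\dot x_0^-(0)]$ to absorb the leading minus, which reproduces \eqref{a2} exactly. Thus \eqref{b4} and \eqref{b6} are equivalent, and it suffices to establish \eqref{b4}.

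For $\epsilon=1$ I would substitute the definition \eqref{a3}, namely $\dot x_0^+(1)=[2]^{-1}\gamma^{\frac12}[\dot a_0(1),\dot x_0^+(0)]$, into the left-hand side and expand the resulting double bracket by the Jacobi identity
\[
[[\dot a_0(1),\dot x_0^+(0)],\dot x_0^-(0)]=[\dot a_0(1),[\dot x_0^+(0),\dot x_0^-(0)]]-[\dot x_0^+(0),[\dot a_0(1),\dot x_0^-(0)]].
\]
In the first term $[\dot x_0^+(0),\dot x_0^-(0)]=(K_0-K_0^{-1})/(q-q^{-1})$ by \eqref{f:comm4}, and this is annihilated by $\dot a_0(1)$ since $\dot a_0(1)$ commutes with $K_0^{\pm1}$ by \eqref{b1}; hence the first term vanishes. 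In the second term I read off $[\dot a_0(1),\dot x_0^-(0)]=-[2]\gamma^{\frac12}\dot x_0^-(1)$ from the instance of \eqref{b2} with superscript $-$. Collecting the two contributions gives $[[\dot a_0(1),\dot x_0^+(0)],\dot x_0^-(0)]=[2]\gamma^{\frac12}[\dot x_0^+(0),\dot x_0^-(1)]$, and multiplying by the prefactor $[2]^{-1}\gamma^{\frac12}$ yields precisely $[\dot x_0^+(1),\dot x_0^-(0)]=\gamma[\dot x_0^+(0),\dot x_0^-(1)]$, which is \eqref{b4}. Equivalently, multiplying instead by $K_0^{-1}\gamma^{-\frac12}$ lands on $\dot a_0(1)$ via \eqref{a1}, giving the $+1$ case of \eqref{b6} directly.

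The case $\epsilon=-1$ I would obtain either by the same Jacobi computation applied to $\dot x_0^-(-1)=[2]^{-1}\gamma^{-\frac12}[\dot a_0(-1),\dot x_0^-(0)]$ from \eqref{a3}, or by applying the Chevalley anti-involution $\iota$ to the $\epsilon=1$ identity and clearing a factor of $\gamma$. I expect no structural difficulty: the argument is a single bracket expansion entirely governed by Proposition \ref{p1}. The one place demanding genuine care is the bookkeeping of the central factors $K_0^{\pm1}$ and of the half-integer powers of $\gamma$, together with selecting the correct instance and sign in \eqref{b2}; in particular, when invoking $\iota$ one must track that $\iota$ reverses brackets and conjugates $\gamma$, so that no spurious sign creeps into the coefficient of $\gamma$. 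A single sign error in \eqref{b2} would flip that coefficient, so this is the step I would verify most carefully.
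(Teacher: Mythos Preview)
Your proof is correct and is precisely the natural direct verification the paper has in mind; the paper does not supply a proof for this lemma, and your single Jacobi expansion of $[[\dot a_0(1),\dot x_0^+(0)],\dot x_0^-(0)]$ together with the instance $[\dot a_0(1),\dot x_0^-(0)]=-[2]\gamma^{1/2}\dot x_0^-(1)$ of \eqref{b2} and \eqref{f:comm4} is exactly the intended computation. Your equivalence argument between \eqref{b4} and \eqref{b6} and the reduction of the $\epsilon=-1$ case via the Chevalley anti-involution $\iota$ are both accurate.
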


%\begin{proof}\, \eqref{b4}-\eqref{b5} follow from the notation \eqref{a3}.
%\begin{equation*}
%\begin{split}
%&\bigl[\,x_0^+(1),\,x_0^-(0)\,\bigr]
%=[2]^{-1}\gamma^{\frac{1}{2}}\,\bigl[\,[a_0(1),\,x_0^+(0)],\,x_0^-(0)\,\bigr]\\
%&=[2]^{-1}\gamma^{\frac{1}{2}}\,\Bigl(\bigl[\,[a_0(1),\,x_0^-(0)],\,x_0^+(0)\,\bigr]
%+\bigl[\,a_0(1),\,[x_0^+(0),\,x_0^-(0)]\,\bigr]\Bigr)\\
%&=\gamma \bigl[\,x_0^+(0),\,x_0^-(1)\,\bigr]
%\end{split}
%\end{equation*}
%
%Similarly, it holds by using the notation \eqref{a3},
%\begin{equation*}
%\begin{split}
%&\bigl[\,x_0^+(0),\,x_0^-(-1)\,\bigr]
%=-[2]^{-1}\gamma^{-\frac{1}{2}}\,\bigl[\,x_0^+(0),\,[a_0(-1),\,x_0^-(0)]\,\bigr]\\
%&=-[2]^{-1}\gamma^{-\frac{1}{2}}\,\Bigl(\bigl[\,[x_0^+(0),\,a_0(-1)],\,x_0^-(0)]\,\bigr]
%+\bigl[\,a_0(-1),\,[x_0^+(0),\,x_0^-(0)]\,\bigr]\Bigr)\\
%&=\gamma^{-1} \bigl[\,x_0^+(-1),\,x_0^-(0)\,\bigr]
%\end{split}
%\end{equation*}
%which is compatible with  the defining relation \eqref{e:comm4}.
%\end{proof}

\begin{prop} From the above constructions, we have the following relations, which are compatible to the defining relations of the quantum toroidal algebra $U_q(sl_{n+1},tor)$.
\begin{align}\label{b8}
&K_ix_0^{\epsilon}(\epsilon)K_i^{-1}=q^{a_{i0}}x_0^{\epsilon}(\epsilon), \quad q^{d_1}x_0^{\epsilon}(\epsilon)q^{-d_1}=q^{\epsilon} x_0^{\epsilon}(\epsilon),\quad q^{d_2}x_0^{\epsilon}(\epsilon)q^{-d_2}=q x_0^{\epsilon}(\epsilon),\\\label{b9}
&[\,x_0^{\pm}(1),\,x_0^{\pm}(-1)\,]_{q^{\pm2}}+[\,x_0^{\pm}(0),\,x_0^{\pm}(0)\,]_{q^{\pm2}}=0,\\\label{b10}
&\bigl[\,a_0(-\epsilon),\,x_0^{\epsilon}(\epsilon)\,\bigr]=\epsilon [2]\gamma^{-\frac{\epsilon}{2}} x_0^{\epsilon}(0),\\ %\bigl[\,a_0(\epsilon),\,x_0^{\epsilon}(0)\,\bigr]=\epsilon [2]\gamma^{-\frac{\epsilon}{2}}x_0^{\epsilon}(\epsilon),
\label{b11}
&[\,x_0^+(1),\,x_0^-(-1)\,]=\frac{\gamma K_0-\gamma^{-1} K_0^{-1}}{q-q^{-1}}.
\end{align}
\end{prop}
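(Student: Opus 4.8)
The plan is to derive all four relations \eqref{b8}--\eqref{b11} directly from the defining expression \eqref{a3} for $\dot{x}_0^{\epsilon}(\epsilon)$, together with the relations \eqref{b1}--\eqref{b4} and \eqref{b6} already established in Proposition \ref{p1} and Lemma \ref{p2}, invoking the $q$-Jacobi identities \eqref{v1}--\eqref{v2} (which collapse to the ordinary Leibniz rule when the bracket parameters equal $1$). Since the Chevalley anti-involution $\iota$ sends $\dot{a}_0(\pm1)\mapsto \dot{a}_0(\mp1)$ and $\dot{x}_0^{\epsilon}(\epsilon)\mapsto \dot{x}_0^{-\epsilon}(-\epsilon)$, it suffices to verify each identity for $\epsilon=+$ and then apply $\iota$ to obtain $\epsilon=-$. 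Throughout I would read off the matching Drinfeld relations---\eqref{e:comm00} for \eqref{b8}, \eqref{e:comm3} for \eqref{b9}, \eqref{e:comm2} for \eqref{b10}, and \eqref{e:comm4} with $\phi_0(0)=K_0$ and $\varphi_0(0)=K_0^{-1}$ for \eqref{b11}---so that the asserted compatibility with $U_q(sl_{n+1},tor)$ is transparent.

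The two weight-type relations are immediate. For \eqref{b8} I would conjugate the definition $\dot{x}_0^{\epsilon}(\epsilon)=[2]^{-1}\gamma^{\epsilon/2}[\dot{a}_0(\epsilon),\dot{x}_0^{\epsilon}(0)]$ by $K_i$, $q^{d_1}$, and $q^{d_2}$: each commutes with $\gamma$ and acts on $\dot{a}_0(\epsilon)$ by the scalar recorded in \eqref{b1} and on $\dot{x}_0^{\epsilon}(0)$ by the defining scalar, so the eigenvalue on the bracket is simply the product, yielding the claimed values. For \eqref{b10} I would substitute \eqref{a3} into $[\dot{a}_0(-\epsilon),\dot{x}_0^{\epsilon}(\epsilon)]$, expand the double bracket by Jacobi, observe that $[\dot{a}_0(-\epsilon),\dot{a}_0(\epsilon)]$ is a scalar multiple of $\frac{\gamma-\gamma^{-1}}{q-q^{-1}}$ by \eqref{b3} and hence central, so that term vanishes, and then apply \eqref{b2} twice to the surviving term; collecting the $\gamma$-powers and $[2]$-factors gives the stated right-hand side.

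The genuinely computational steps are \eqref{b9} and \eqref{b11}. For \eqref{b9} I would write $\dot{x}_0^+(1)=[2]^{-1}\gamma^{1/2}[\dot{a}_0(1),\dot{x}_0^+(0)]$ and expand $[\dot{x}_0^+(1),\dot{x}_0^+(-1)]$ by Leibniz, treating the inner bracket $[\dot{x}_0^+(0),\dot{x}_0^+(-1)]=(1-q^{-2})\dot{x}_0^+(0)\dot{x}_0^+(-1)$ via the starting relation \eqref{f:comm3}, while $[\dot{a}_0(1),\dot{x}_0^+(-1)]=[2]\gamma^{-1/2}\dot{x}_0^+(0)$ comes from \eqref{b2}; reorganizing reproduces the ordinary commutator as $q^{-2}\dot{x}_0^+(1)\dot{x}_0^+(-1)-\dot{x}_0^+(-1)\dot{x}_0^+(1)=(1-q^{-2})(\dot{x}_0^+(0))^2$, which upon multiplying by $q^{2}$ is exactly $[\dot{x}_0^+(1),\dot{x}_0^+(-1)]_{q^{2}}=-[\dot{x}_0^+(0),\dot{x}_0^+(0)]_{q^{2}}$. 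For \eqref{b11} I would expand $\dot{x}_0^-(-1)$ via \eqref{a3}, apply Leibniz to $[\dot{x}_0^+(1),[\dot{a}_0(-1),\dot{x}_0^-(0)]]$, and feed in $[\dot{x}_0^+(1),\dot{a}_0(-1)]$ from \eqref{b10}, the identity $[\dot{x}_0^+(1),\dot{x}_0^-(0)]=K_0\gamma^{1/2}\dot{a}_0(1)$ from the second equality of \eqref{b6}, $[\dot{x}_0^+(0),\dot{x}_0^-(0)]$ from \eqref{f:comm4}, and $[\dot{a}_0(-1),\dot{a}_0(1)]$ from \eqref{b3}; the $K_0$ and $K_0^{-1}$ contributions then collapse to $\frac{\gamma K_0-\gamma^{-1}K_0^{-1}}{q-q^{-1}}$.

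I expect the main obstacle to lie in \eqref{b11}: it is the single identity that mixes \eqref{b2}, \eqref{b3}, \eqref{b6}, and \eqref{f:comm4} simultaneously, and the bookkeeping of the half-integer powers $\gamma^{\pm1/2}$ together with the placement of $K_0$ (which must be commuted past $\dot{a}_0(\pm1)$ using \eqref{b1}) is exactly where a sign or grading slip would most easily occur; indeed, correctly reconciling the overall sign here demands care with the conventions fixed in \eqref{b2}--\eqref{b3}. A secondary delicate point is the passage in \eqref{b9} from the ordinary commutator to the twisted $q^{\pm2}$-bracket, where the coefficient $(1-q^{-2})$ issued by \eqref{f:comm3} must conspire precisely with the Serre factor in $[\dot{x}_0^{\pm}(0),\dot{x}_0^{\pm}(0)]_{q^{\pm2}}$ for the two terms to cancel.
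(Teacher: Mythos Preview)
Your proposal is correct and uses the same toolkit as the paper (the definition \eqref{a3}, the relations of Proposition~\ref{p1} and Lemma~\ref{p2}, and Jacobi/Leibniz), but two of the four arguments are organized differently from the paper's own proof.

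For \eqref{b9} the paper does not expand the ordinary commutator $[\dot{x}_0^+(1),\dot{x}_0^+(-1)]$ as you do; instead it applies the derivation $[\dot{a}_0(1),\,\cdot\,]$ directly to the known vanishing $q$-bracket $[\dot{x}_0^+(0),\dot{x}_0^+(-1)]_{q^2}=0$ from \eqref{f:comm3}, and Leibniz on the twisted bracket immediately produces $[\dot{x}_0^+(1),\dot{x}_0^+(-1)]_{q^2}+[\dot{x}_0^+(0),\dot{x}_0^+(0)]_{q^2}=0$ in one line. This avoids the conversion step from ordinary to twisted brackets that you flag as a delicate point; your route works, but the paper's is shorter and sidesteps exactly the $(1-q^{-2})$ bookkeeping you worried about. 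For \eqref{b10} the paper goes via \eqref{a2}, \eqref{f:comm4}, \eqref{b6} and the just-proved \eqref{b9}, whereas your argument (Jacobi on \eqref{a3}, centrality of $[\dot{a}_0(1),\dot{a}_0(-1)]$ from \eqref{b3}, then \eqref{b2} twice) is more self-contained and arguably cleaner since it does not need \eqref{b9}. For \eqref{b8} and \eqref{b11} you and the paper agree: the former is immediate from the definition, and the latter is the genuine computation, which the paper simply records as ``direct calculations'' while you spell out the intended expansion.
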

\begin{proof}\, By the Chevalley anti-involution, it is enough to show the relations for $\epsilon=1$.
%In deed, \eqref{b8} can be checked directly.
% we have that,
%\begin{equation*}
%\begin{split}
%&K_ix_0^+(1)K_i^{-1}=K_i[2]^{-1}\gamma^{\frac{1}{2}}\,\bigl[\,a_0(1),\,x_0^+(0)\,\bigr]K_i^{-1}
%=q^{a_{i0}}x_0^+(1)
%\end{split}
%\end{equation*}
%and
%\begin{equation*}
%\begin{split}
%&q^{d_1}x_0^+(1)q^{-d_1}=q^{d_1}[2]^{-1}\gamma^{\frac{1}{2}}\,\bigl[\,a_0(1),\,x_0^+(0)\,\bigr]q^{-d_1}
%=q [2]^{-1}\gamma^{\frac{1}{2}}\,\bigl[\,a_0(1),\,x_0^+(0)\,\bigr]=q x_0^+(1)
%\end{split}
%\end{equation*}
%which are compatible with  the defining relation \eqref{e:comm00} and \eqref{e:comm0a} respectively.
To check \eqref{b9}, note that $[\,x_0^{+}(0),\,x_0^{+}(-1)\,]_{q^{2}}=0$ by \eqref{f:comm3}, then
\begin{equation*}
\begin{split}
%0&=[\,a_0(1),\,A_1\,]\\
0&=\Big[\,a_0(1),\,[\,x_0^{+}(0),\,x_0^{+}(-1)\,]_{q^{2}}\,\Big]\\
&=\Bigl(\Big[\,[a_0(1),\,x_0^{+}(0)],\,x_0^{+}(-1)\,\Big]_{q^{2}}+
\Big[\,x_0^{+}(0),\,[a_0(1),\,x_0^{+}(-1)\,]\,\Big]_{q^{2}}\Big)\\
&=[2]\gamma^{-\frac{1}{2}}\Big([\,x_0^{+}(1),\,x_0^{+}(-1)\,]_{q^{2}}
+[\,x_0^{+}(0),\,x_0^{+}(0)\,]_{q^{2}}\Big).
\end{split}
\end{equation*}
It means that $[\,x_0^{+}(1),\,x_0^{+}(-1)\,]_{q^{2}}
+[\,x_0^{+}(0),\,x_0^{+}(0)\,]_{q^{2}}=0$, which is compatible with the defining relation \eqref{e:Dr7}. %\eqref{e:comm3}.
Similarly \eqref{b10} follows from  \eqref{a2} and \eqref{f:comm4} via \eqref{b6} and \eqref{b9}.
%\begin{equation*}
%\begin{split}
%&[\,a_0(-1),\,x_0^+(1)\,]\\
%&=\Big[\,K_0\gamma^{-\frac{1}{2}}\,\bigl[\,x_0^+(-1),\,x_0^-(0)\,\bigr],\,x_0^+(1)\,\Big]\\
%&=K_0\gamma^{-\frac{1}{2}}\,\Big[\,\bigl[\,x_0^+(-1),\,x_0^-(0)\,\bigr],\,x_0^+(1)\,\Big]_{q^{-2}}\\
%&=K_0\gamma^{-\frac{1}{2}}\,\Bigl(\Big[\,x_0^+(-1),\,\bigl[\,x_0^-(0),\,x_0^+(1)\,\bigr]\,\Bigr]_{q^{-2}}
%+\Big[\,\bigl[\,x_0^+(-1),\,x_0^+(1)\,\bigr]_{q^{-2}},\,x_0^-(0)\,\Big]\Bigr)\\
%&=K_0\gamma^{-\frac{1}{2}}\,\Bigl(q^{-2}[2]K_0x_0^+(0)-q^{-2}[2]K_0x_0^+(0)+[2]K_0^{-1}x_0^+(0)\Big)\\
%&=[2]\gamma^{-1/2}x_0^+(0),
%\end{split}
%\end{equation*}
%where we have used  \eqref{b7} and \eqref{b9}.
%similarly, one has that,
%\begin{equation*}
%\begin{split}
%&[\,a_0(1),\,x_0^+(0)\,]\\&=\Big[\,K_0^{-1}\gamma^{1/2}\,\bigl[\,x_0^+(0),\,x_0^-(1)\,\bigr],\,x_0^+(0)\,\Big]\\
%&=K_0^{-1}\gamma^{1/2}\,\Bigl(\Big[\,x_0^+(0),\,\bigl[\,x_0^-(1),\,x_0^+(0)\,\bigr]\,\Bigr]_{q^{2}}
%+\Big[\,\bigl[\,x_0^+(0),\,x_0^+(0)\,\bigr]_{q^{2}},\,x_0^-(1)\,\Big]\Bigr)\\
%&=\Bigl(-q^{-2}(x_0^+(0))\\
%&=-[2]\gamma^{1/2}x_0^-(1)
%\end{split}
%\end{equation*}
\eqref{b11} holds from direct calculation.
%\begin{equation*}
%\begin{split}
%&[\,x_0^+(1),\,x_0^-(-1)\,]\\
%%&=\Big[\,K_0^{-1}\gamma^{1/2}\,\bigl[\,x_0^+(0),\,x_0^-(1)\,\bigr],\,x_1^+(0)\,\Big]\\
%&=[2]^{-2}\,\Big[\,\bigl[\,a_0(1),\,x_0^+(0)\,\bigr],\,\bigl[\,a_0(-1),\,x_0^-(0)\,\bigr]\,\Big]\\
%%&=[2]^{-2}\,\Bigl(\Big[\,\bigl[\,a_0(1),\,[\,x_0^+(0),\,a_0(-1)\,]\,\bigr],\,x_0^-(0)\,\Big]
%%+\Big[\,a_0(-1),\,\bigl[\,[\,a_0(1),\,x_0^-(0)\,],\,x_0^+(0)\,\bigr]\,\Big]\Bigr)\\
%&=-[2]^{-1}\,\Bigl(\gamma^{-\frac{1}{2}}\Big[\,\bigl[\,a_0(1),\,x_0^+(-1)\,\bigr],\,x_0^-(0)\,\Big]
%+\gamma^{\frac{1}{2}}\Big[\,a_0(-1),\,\bigl[\,x_0^-(1),\,x_0^+(0)\,\bigr]\,\Big]\Bigr)\\
%%&=\Bigl(\gamma^{-1}\frac{K_0-K_0^{-1}}{q-q^{-1}}+K_0\frac{\gamma-\gamma^{-1}}{q-q^{-1}}\Bigr)\\
%&=\frac{\gamma K_0-\gamma^{-1} K_0^{-1}}{q-q^{-1}}
%\end{split}
%\end{equation*}
%which is compatible with  the defining relation \eqref{e:comm4}.
\end{proof}
Now we construct all degree-$k$ elements $x_0^{\pm}(k),\,x_0^{\pm}(-k),\,a_0(\pm k)$ involving with index $i=0$ by induction on the degree as follows.  For $\epsilon=\pm$ or $\pm1$, we denote that,
\begin{gather}\label{a4}
x_{0}^{\pm}(\epsilon k)=\pm [2]^{-1}\gamma^{\pm\frac{1}{2}}\,\bigl[\,a_0(\epsilon),\,x_{0}^{\pm}(\epsilon (k-1))\,\bigr]\in
{\mathcal U}_0,\\\label{a5}
%x_{0}^{\pm}(-k-1)=\pm [2]^{-1}\gamma^{\pm\frac{1}{2}}\,\bigl[\,a_0(-1),\,x_{i}^{\pm}(-k)\,\bigr]\in
%{\mathcal U}_0,\\\label{a6}
\phi_{i}(k)=(q-q^{-1})\gamma^{\frac{2-k}{2}}\,\bigl[\,x_{0}^+(k-1),\,x_{0}^{-}(1)\,\bigr]\in
{\mathcal U}_0,\\\label{a6}
{\varphi}_{0}(-k)=-(q-q^{-1})\gamma^{\frac{k-2}{2}}\,\bigl[\,x_{0}^+(-1),\,x_{0}^{-}(-k+1)\,\bigr]\in
{\mathcal U}_0,
\end{gather}
where $a_0(\pm k)$ are defined by $\phi_0(k)$ and ${\varphi}_0(-k)\, (k\geq 0)$ %such that $\phi_i(0)=K_i$ and  ${\varphi}_i(0)=K_i^{-1}$
as follows:
\begin{gather*}\sum\limits_{m=0}^{\infty}\phi_0(\pm r) z^{\mp r}=K_0^{\pm 1} \exp \Big(\pm
(q-q^{-1})\sum\limits_{r=1}^{\infty}
 a_0(\pm r)z^{\mp r}\Big).
%\sum\limits_{r=0}^{\infty}{\varphi}_0(-r) z^{r}=K_0^{-1}\exp
%\Big({-}(q{-}q^{-1})
%\sum\limits_{r=1}^{\infty}a_0(-r)z^{r}\Big).
\end{gather*}

A partition $\lambda$ of $k$, denoted $\lambda\vdash k$, is a decreasing sequence of positive integers $\lambda_1\geq\lambda_2\geq\cdots\geq \lambda_l>0 $ such that $\lambda_1+\lambda_2+\cdots+\lambda_l=k$, where $l(\lambda)=l$ is called the number of parts.
$\lambda$ can also be written as $(1^{m_1}2^{m_2}\cdots)$ with multiplicity of $i$ being $m_i$.
Then we obtain the following formulas between $a_0(\pm k)$ and  $\phi_0(k)$ or ${\varphi}_0(-k)$:
 \begin{align}
 \label{a7}
 \phi_0(\pm k)&=K_0^{\pm 1}\sum\limits_{\lambda\vdash k}\frac{(q^{\pm 1}-q^{\mp 1})^{l(\lambda)}}{m_{\lambda}!}a_0(\pm\lambda),
 % &\phi_0(k)=K_0\sum\limits_{t=1}^{k}\frac{(q-q^{-1})^t}{t!}\sum \limits_{\substack{ 1\leqslant i_1,\cdots,i_t\leqslant k-t+1,\\
 % i_1+i_2+\cdots+i_t=k}} a_0(i_1)\cdots a_0(i_t),
  %\\ \label{a8}
% \phi_0(-k)&=K_0^{-1}\sum\limits_{\lambda\vdash k}\frac{(q^{-1}-q)^{l(\lambda)}}{m_{\lambda}!}a_0(-\lambda),
%&{\varphi}_0(-k)=K_0^{-1}\sum\limits_{t=1}^{k}\frac{(q^{-1}-q)^t}{t!}\sum\limits_{\substack {1\leqslant i_1,\cdots,i_t\leqslant k-t+1,\\
%  i_1+i_2+\cdots+i_t=k}} a_0(-i_1)\cdots a_0(-i_t).
\end{align}
where $m_{\lambda}!=\prod_{i\geq 1}m_i!$
and $a_0(\pm\lambda)=a_0(\pm\lambda_1)a_0(\pm\lambda_2)\cdots.$
It is clear that $\iota( \phi_0(k))= \varphi_0(-k)$.

The following result can be directly checked, but we will give a general proof in Prop. \ref{p11}.
\begin{lemm} For $\epsilon=\pm$ or $\pm1$, one has that
\begin{align}\label{b12}
&[\,a_0(2\epsilon),\,x_0^{\pm}(-\epsilon)\,]=\frac{[4]}{2}\gamma^{\mp1}x_0^{\pm}(\epsilon),\\
\label{b13}
&[\,a_0(2\epsilon),\,x_0^{\pm}(0)\,]=\frac{[4]}{2}\gamma^{\mp1}x_0^{\pm}(2\epsilon),\\\label{b14}
&[\,a_0(2),\,a_0(-1)]=0,\\\label{b15}
&[\,a_0(2),\,a_0(-2)]=\frac{[4]}{2}\frac{\gamma^2-\gamma^{-2}}{q-q^{-1}}.
\end{align}
\end{lemm}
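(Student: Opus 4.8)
The plan is to reduce everything to degree $\le 1$ data, where the base relations \eqref{b1}--\eqref{b11} and the quantum Jacobi identities \eqref{v1}--\eqref{v2} are available. The starting point is an explicit formula for $\dot a_0(2)$: comparing the $k=2$ instance of \eqref{a5}, namely $\dot\phi_0(2)=(q-q^{-1})[\dot x_0^+(1),\dot x_0^-(1)]$, with the $k=2$ instance of \eqref{a7}, namely $\dot\phi_0(2)=K_0\bigl((q-q^{-1})\dot a_0(2)+\tfrac{(q-q^{-1})^2}{2}\dot a_0(1)^2\bigr)$, one solves for
\begin{equation*}
\dot a_0(2)=K_0^{-1}[\dot x_0^+(1),\dot x_0^-(1)]-\tfrac{q-q^{-1}}{2}\,\dot a_0(1)^2 .
\end{equation*}
Applying the Chevalley anti-involution $\iota$ gives the companion expression for $\dot a_0(-2)$, so throughout it suffices to treat $\epsilon=1$.

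For \eqref{b12} and \eqref{b13} I would substitute this formula into $[\dot a_0(2),\dot x_0^+(-1)]$ and $[\dot a_0(2),\dot x_0^+(0)]$, expand by the Leibniz rule, and rewrite each nested bracket using the already-established relations $[\dot a_0(1),\dot x_0^+(m)]=[2]\gamma^{-1/2}\dot x_0^+(m+1)$ (from \eqref{b2}, \eqref{b10} and \eqref{a4}), the weight relations \eqref{b8}, and the $K_0$-commutation in \eqref{f:comm0}. The term coming from $[\dot x_0^+(1),\dot x_0^-(1)]$ is processed with \eqref{v1}--\eqref{v2}, replacing the mixed bracket $[\dot x_0^-(1),\dot x_0^+(0)]$ by $\dot a_0(1)$ via \eqref{b6}. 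This step first needs two auxiliary relations: the degree-shifted $q$-Serre relation $[\dot x_0^+(1),\dot x_0^+(0)]_{q^2}=0$ and the extended mixed relations $[\dot x_0^+(k),\dot x_0^-(l)]$ for $k+l=1$; both are obtained by applying $\dot a_0(\pm1)$ to \eqref{f:comm3}, \eqref{f:comm4}, \eqref{b9}, \eqref{b11}.

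Once \eqref{b12}--\eqref{b13} are in hand, \eqref{b14} and \eqref{b15} follow with no new ideas. Writing $\dot a_0(-1)=K_0\gamma^{-1/2}[\dot x_0^+(-1),\dot x_0^-(0)]$ (from \eqref{a2}, \eqref{b6}) and using the Leibniz rule, $[\dot a_0(2),\dot a_0(-1)]$ becomes a sum of two terms in which \eqref{b12} and \eqref{b13} produce $\dot x_0^\pm$ of shifted degree; the extended mixed relations then collapse both to a multiple of $K_0\dot a_0(1)$, and the two multiples cancel, giving $0$. For \eqref{b15} I would use the companion expression for $\dot a_0(-2)$, reduce $[\dot a_0(2),\dot a_0(-2)]$ in the same way, and finish with the base Heisenberg relation \eqref{b3}.

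The main obstacle is the coefficient and sign bookkeeping rather than any structural difficulty. The target coefficient $\tfrac{[4]}{2}$ is \emph{not} simply $[2]^2$ (the value of the iterated bracket $[\dot a_0(1),[\dot a_0(1),\dot x_0^+(0)]]$), so it can only emerge after the contribution of the commutator term $K_0^{-1}[\dot x_0^+(1),\dot x_0^-(1)]$ and of the quadratic term $-\tfrac{q-q^{-1}}{2}\dot a_0(1)^2$ are added and the ordering corrections forced by the $K_0$-weights cancel the unwanted normal-ordered monomials such as $\dot x_0^+(1)\dot a_0(1)$. Keeping the sign conventions consistent (the factor $\gamma^{\mp1}$ together with the sign carried by $\epsilon$) is where care is required; the cancellation in \eqref{b14} serves as a clean consistency check that pins these signs down.
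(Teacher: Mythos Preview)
Your approach is essentially the paper's own: derive the explicit formula
$\dot a_0(2)=K_0^{-1}[\dot x_0^+(1),\dot x_0^-(1)]-\tfrac{q-q^{-1}}{2}\dot a_0(1)^2$
from \eqref{a5}/\eqref{a7}, substitute into the commutators, and expand with the quantum Jacobi identities. The paper carries this out verbatim for \eqref{b12}, says \eqref{b13} is similar, and then does \eqref{b14}--\eqref{b15}.

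Two small deviations are worth flagging. First, for \eqref{b14} the paper expands $\dot a_0(2)$ (using $[\dot a_0(-1),\dot x_0^\pm(\pm1)]$ from \eqref{b2}, \eqref{b10}), whereas you expand $\dot a_0(-1)$ and feed in the freshly proved \eqref{b12}--\eqref{b13}; both routes work and are of the same length. Second, be careful with your appeal to the auxiliary relation $[\dot x_0^+(1),\dot x_0^+(0)]_{q^2}=0$: in the paper's logical order this is \eqref{b17}, proved \emph{after} the present lemma via \eqref{b16}, which in turn uses \eqref{b12}--\eqref{b13}. The paper's direct check of \eqref{b12} sidesteps this by using only \eqref{b9} (the relation $[\dot x_0^+(1),\dot x_0^+(-1)]_{q^2}+[\dot x_0^+(0),\dot x_0^+(0)]_{q^2}=0$) in the Jacobi expansion. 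For \eqref{b13} you should likewise arrange the Jacobi split so that only \eqref{b9} and \eqref{b6} are needed, rather than invoking \eqref{b17}, to avoid a circularity.
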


Using these relations, we have the following result.
\begin{prop}For $\epsilon=\pm$ or $\pm1$, it holds that,
\begin{align}\label{b16}
&[\,x_0^{\pm}(2),\,x_0^{\pm}(-1)\,]_{q^{\pm2}}
+[\,x_0^{\pm}(0),\,x_0^{\pm}(1)\,]_{q^{\pm2}}=0,\\\label{b17}
&[\,x_0^{\epsilon}(\epsilon),\,x_0^{\epsilon}(0)\,]_{q^{2}}=0,
\end{align}
which are compatible with the defining relation \eqref{e:Dr7}. %\eqref{e:comm3}.
\end{prop}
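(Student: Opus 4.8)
The plan is to obtain both \eqref{b16} and \eqref{b17} by applying the ``raising'' elements $\dot a_0(1)$ and $\dot a_0(2)$ as derivations to relations already established at lower degree, exactly in the spiral spirit used to deduce \eqref{b9}. By the Chevalley anti-involution $\iota$ it suffices to treat $\epsilon=+1$. Abbreviating $X_k:=\dot x_0^+(k)$, I first record the adjoint actions I will use: from \eqref{b2}, \eqref{a3} and \eqref{a4} one has
\begin{equation*}
[\,\dot a_0(1),\,X_k\,]=[2]\gamma^{-\frac12}X_{k+1}\quad (k=-1,0,1),
\end{equation*}
and from \eqref{b12}--\eqref{b13} one has $[\,\dot a_0(2),\,X_{-1}\,]=\tfrac{[4]}{2}\gamma^{-1}X_1$ and $[\,\dot a_0(2),\,X_0\,]=\tfrac{[4]}{2}\gamma^{-1}X_2$.

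First I would apply the derivation $[\,\dot a_0(1),-\,]$ to the established relation \eqref{b9}, written as $[X_1,X_{-1}]_{q^2}+(1-q^2)X_0^2=0$. Expanding termwise by the Leibniz rule and the action formula above, then dividing out the common factor $[2]\gamma^{-1/2}$ and collecting the four monomials, the coefficient of $X_1X_0$ is $2-q^2$ and that of $X_0X_1$ is $1-2q^2$; rewriting $(2-q^2)X_1X_0+(1-2q^2)X_0X_1=2[X_1,X_0]_{q^2}+[X_0,X_1]_{q^2}$ gives
\begin{equation*}
[X_2,X_{-1}]_{q^2}+2\,[X_1,X_0]_{q^2}+[X_0,X_1]_{q^2}=0. \tag{$*$}
\end{equation*}
Independently I would apply $[\,\dot a_0(2),-\,]$ to \eqref{f:comm3} in the form $[X_{-1},X_0]_{q^{-2}}=0$; the same bookkeeping, now dividing by $\tfrac{[4]}{2}\gamma^{-1}$, yields
\begin{equation*}
[X_1,X_0]_{q^{-2}}+[X_{-1},X_2]_{q^{-2}}=0. \tag{$**$}
\end{equation*}

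The combination is then purely formal. Using $[a,b]_{q^2}=-q^2[b,a]_{q^{-2}}$, relation $(*)$ becomes $[X_{-1},X_2]_{q^{-2}}+2[X_0,X_1]_{q^{-2}}+[X_1,X_0]_{q^{-2}}=0$ after dividing by $-q^2$; subtracting $(**)$ leaves $2[X_0,X_1]_{q^{-2}}=0$, i.e. $[X_0,X_1]_{q^{-2}}=0$, which is equivalent to $[X_1,X_0]_{q^2}=0$. This is precisely \eqref{b17}. Feeding $[X_1,X_0]_{q^2}=0$ back into $(*)$ immediately gives $[X_2,X_{-1}]_{q^2}+[X_0,X_1]_{q^2}=0$, which is \eqref{b16}; the cases $\epsilon=-$ then follow by applying $\iota$. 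The compatibility with \eqref{e:comm3} is the observation that \eqref{b17} and \eqref{b16} are exactly the $i=j=0$ specializations of \eqref{e:comm3} at $(k,l)=(0,0)$ and $(k,l)=(1,-1)$, since $b_{00}=0$ and $(\alpha_0,\alpha_0)=2$.

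The only delicate point I anticipate is the arithmetic bookkeeping of the $q$-powers: correctly tracking the coefficients when expanding the two derivations, and converting $(*)$ into $q^{-2}$-brackets without sign slips. No quantum Jacobi identity \eqref{v1}--\eqref{v2} is needed here, only the Leibniz property of the maps $[\,\dot a_0(r),-\,]$ together with the pinned-down adjoint-action constants, so once those constants are in hand the remainder is elementary linear algebra among the four monomials $X_2X_{-1},X_{-1}X_2,X_1X_0,X_0X_1$.
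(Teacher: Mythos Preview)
Your proof is correct and follows essentially the same approach as the paper: both arguments apply $[\,\dot a_0(1),-\,]$ to \eqref{b9} and $[\,\dot a_0(2),-\,]$ to \eqref{f:comm3}, then combine the two resulting identities. The only difference is cosmetic: your relation $(**)$, after multiplying by $-q^2$, is already \eqref{b16} on the nose, so you could have stated \eqref{b16} immediately from $(**)$ and then read off \eqref{b17} from $(*)$, rather than first extracting \eqref{b17} and then feeding it back into $(*)$; the paper does it in that order.
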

\begin{proof} Using the Chevalley anti-involution $\iota$, it is enough to show the case of $\epsilon=+$. To check \eqref{b16}, note that $[\,x_0^{+}(0),\,x_0^{+}(-1)\,]_{q^{2}}=0$.
 Thus it follows from \eqref{b12} and \eqref{b13} that
\begin{equation*}
\begin{split}
0=[\,a_0(2), [\,x_0^{+}(0),\,x_0^{+}(-1)\,]_{q^{2}}\,]
=\frac{[4]}{2}\gamma^{-1}\Big([\,x_0^{+}(2),\,x_0^{+}(-1)\,]_{q^{2}}
+[\,x_0^{+}(0),\,x_0^{+}(1)\,]_{q^{2}}\Big),
\end{split}
\end{equation*}
which implies that $[\,x_0^{+}(2),\,x_0^{+}(-1)\,]_{q^{2}}
+[\,x_0^{+}(0),\,x_0^{+}(1)\,]_{q^{2}}=0$.

%For \eqref{b17}, we denote that $A_2=[\,x_0^{+}(1),\,x_0^{+}(-1)\,]_{q^{2}}
%+[\,x_0^{+}(0),\,x_0^{+}(0)\,]_{q^{2}}=0$ by \eqref{b9}.
Similarly by \eqref{b9},
\begin{equation*}
\begin{split}
0&=[\,a_0(1), [\,x_0^{+}(1),\,x_0^{+}(-1)\,]_{q^{2}}
+[\,x_0^{+}(0),\,x_0^{+}(0)\,]_{q^{2}}\,]\\
%&=\Big[\,a_0(1),\,[\,x_0^{+}(1),\,x_0^{+}(-1)\,]_{q^{2}}
%+[\,x_0^{+}(0),\,x_0^{+}(0)\,]_{q^{2}}\,\Big]\\
&=[2]\gamma^{-\frac{1}{2}}\Bigl([x_0^{+}(2),x_0^{+}(-1)]_{q^{2}}+
[x_0^{+}(1), x_0^{+}(0)]_{q^{2}}+[x_0^{+}(1), x_0^{+}(0)]_{q^{2}}
+[x_0^{+}(0), x_0^{+}(1)]_{q^{2}}\Bigr)\\
&=2[2]\gamma^{-\frac{1}{2}}[\,x_0^{+}(1),\,x_0^{+}(0)\,]_{q^{2}},\\
\end{split}
\end{equation*}
where we have used \eqref{b16}.  It yields that $[\,x_0^{+}(1),\,x_0^{+}(0)\,]_{q^{2}}=0$.
\end{proof}

So far we have shown that $x_i^{\pm}(0),\, x_0^{\pm}(1),\,x_0^{\pm}(-1),\,a_0(\pm 1)$  ($i\in I$) in the algebra ${\mathcal U}_0$ satisfy relations \eqref{e:comm0} to \eqref{e:comm0c} and \eqref{e:Dr3} to \eqref{e:Dr11}. Then we can use induction on the degree $k$ to show that
$x_0^{\pm}(k), a_0(\pm k)$ are all in the algebra and satisfy the Drinfeld relations.
To this end, suppose all degree-k ($k \leqslant n$) elements  $x_0^{\pm}(\epsilon k),\,a_0(\epsilon k)$,
 $x_i^{\pm}(0)$ for $i\in I$  are in the algebra ${\mathcal U}_0$, and satisfy relations \eqref{e:comm0} to \eqref{e:comm0c} and \eqref{e:Dr3} to \eqref{e:Dr11}.
Next we need to verify that all the elements $x_0^{\pm}(\epsilon(n+1)),\,a_0(\epsilon(n+1))$  for $i\in I$  satisfy the relations \eqref{e:comm0} to \eqref{e:comm0c} and \eqref{e:Dr3} to \eqref{e:Dr11}. We need a couple of lemmas for this.

%The following is clear.
\begin{lemm} One has that
\begin{align}\label{b18}
[\,a_0(1),\,\phi_0(n)\,]=0,
%\label{b19}
\quad [\,a_0(-1),\,{\varphi}_0(-n)\,]=0.
\end{align}
\end{lemm}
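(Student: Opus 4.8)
The plan is to reduce the bracket to the commutativity among the positive imaginary modes together with the fact that $\dot a_0(1)$ commutes with $K_0$. First I would use the explicit expansion \eqref{a7}, which writes $\dot\phi_0(n)$ as $K_0$ times a polynomial in the elements $\dot a_0(\ell)$ with $1\le \ell\le n$:
\[
\dot\phi_0(n)=K_0\sum_{\lambda\vdash n}\frac{(q-q^{-1})^{l(\lambda)}}{m_\lambda!}\,\dot a_0(\lambda),
\qquad \dot a_0(\lambda)=\dot a_0(\lambda_1)\dot a_0(\lambda_2)\cdots .
\]
Since $[\dot a_0(1),-]$ is a derivation of the algebra, it suffices to check that $\dot a_0(1)$ commutes with each factor occurring here, namely with $K_0$ and with every $\dot a_0(\ell)$ for $1\le\ell\le n$.

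The commutativity with $K_0$ is immediate from \eqref{b1}, which gives $K_0\,\dot a_0(1)\,K_0^{-1}=\dot a_0(1)$. For the imaginary modes, I would observe that $\dot a_0(1)$ and $\dot a_0(\ell)$ both carry strictly positive $q^{d_1}$-degree, so the sum of their degrees is nonzero. Under the standing inductive hypothesis that the Drinfeld relations \eqref{e:comm0}--\eqref{e:comm5} hold up to degree $n$, the relation \eqref{e:comm1} specializes (for $i=j=0$, where moreover $b_{00}=0$) to $[\dot a_0(1),\dot a_0(\ell)]=\delta_{1+\ell,0}(\cdots)=0$ for all $\ell\ge 1$. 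Combining these two facts shows $[\dot a_0(1),\dot\phi_0(n)]=0$.

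Finally, the second identity follows from the Chevalley anti-involution $\iota$. Because $\iota$ reverses products, one has $\iota([\dot a_0(1),\dot\phi_0(n)])=-[\iota(\dot a_0(1)),\iota(\dot\phi_0(n))]=-[\dot a_0(-1),\dot\varphi_0(-n)]$, using $\iota(\dot a_0(1))=\dot a_0(-1)$ and $\iota(\dot\phi_0(n))=\dot\varphi_0(-n)$; hence vanishing of the first bracket forces vanishing of the second. The only delicate point is that the required commutativity $[\dot a_0(1),\dot a_0(\ell)]=0$ among the positive modes is supplied by the running induction rather than by a relation established earlier in this section; once this is granted the statement is indeed immediate, as the label \emph{clear} indicates.
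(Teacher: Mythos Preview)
Your argument is correct and is precisely the reasoning the paper leaves implicit when it declares the lemma ``clear'': expand $\dot\phi_0(n)$ via \eqref{a7} as $K_0$ times a polynomial in $\dot a_0(1),\ldots,\dot a_0(n)$, invoke the inductive hypothesis (which guarantees \eqref{e:comm1} among the modes of degree $\le n$, hence $[\dot a_0(1),\dot a_0(\ell)]=0$ for $1\le\ell\le n$) together with \eqref{b1}, and then apply the Chevalley anti-involution $\iota$ for the second identity. Your closing remark that the commutativity of positive modes comes from the running induction rather than from an explicitly proved relation in this section is exactly the point.
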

%\begin{proof}\,\eqref{b19} follows from \eqref{b18} by the Chevally anti-involution $\iota$. For \eqref{b18}, it holds from inductive hypothesis,
%\begin{equation*}
%\begin{split}
%&\bigl[\,a_0(1),\,\phi_0(n)\,\bigr]\\
%%&=(q-q^{-1})\gamma^{\frac{2-n}{2}}\,\bigl[\,[a_0(1),\,[x_0^+(n-1),\,x_0^-(1)]\,\bigr]\\
%&=(q-q^{-1})\gamma^{\frac{2-n}{2}}\,\Big(\bigl[\,[a_0(1),\,x_0^+(n-1)],\,x_0^-(1)\,\bigr]+
%\bigl[\,x_0^+(n-1),\,[a_0(1),\,x_0^-(1)]\,\bigr]\Big)\\
%&=[2]\phi_0(n+1)-[2]\phi_0(n+1)\\
%&=0.
%\end{split}
%\end{equation*}
%\end{proof}

\begin{lemm} It is clear that,
\begin{align}\label{b20}
&\frac{1}{(q-q^{-1})}\phi_0(n+1)=\gamma^{-\frac{n+1}{2}}\bigl[\,x_0^+(n+1),\,x_0^-(0)\,\bigr]
=\gamma^{\frac{1-n}{2}} \bigl[\,x_0^+(n),\,x_0^-(1)\,\bigr]\\ \nonumber
&\hskip1.8cm=\cdots=\gamma^{\frac{n-1}{2}} \bigl[\,x_0^+(1),\,x_0^-(n)\,\bigr]=\gamma^{\frac{n+1}{2}} \bigl[\,x_0^+(0),\,x_0^-(n+1)\,\bigr],\\
\label{b21}
&\frac{-1}{(q-q^{-1})}{\varphi}_0(-n-1)=\gamma^{-\frac{n+1}{2}}\bigl[\,x_0^+(-n-1),\,x_0^-(0)\,\bigr]=\gamma^{\frac{1-n}{2}} \bigl[\,x_0^+(-n),\,x_0^-(-1)\,\bigr]\\ \nonumber
&\hskip1.8cm=\cdots =\gamma^{\frac{n-1}{2}}\bigl[\,x_0^+(-1),\,x_0^-(-n)\,\bigr]
=\gamma^{\frac{n+1}{2}}\bigl[\,x_0^+(0),\,x_0^-(-n-1)\,\bigr].
\end{align}

\end{lemm}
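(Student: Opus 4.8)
The plan is to establish \eqref{b20}; the companion relation \eqref{b21} will then follow immediately by applying the Chevalley anti-involution $\iota$, since $\iota(\dot x_0^\pm(k))=\dot x_0^\mp(-k)$, $\iota(\dot\phi_0(n+1))=\dot\varphi_0(-n-1)$, and $\iota$ reverses each commutator while sending $q-q^{-1}$ to $-(q-q^{-1})$. Write $B_j=[\,\dot x_0^+(n+1-j),\,\dot x_0^-(j)\,]$ for $0\le j\le n+1$; then \eqref{b20} asserts that $\gamma^{\,j-\frac{n+1}{2}}B_j$ is independent of $j$ and equals $(q-q^{-1})^{-1}\dot\phi_0(n+1)$. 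I would first reduce the whole chain to the single degree-shifting identity
\begin{equation*}
B_j=\gamma\,B_{j+1},\qquad 0\le j\le n,
\end{equation*}
because iterating it yields $B_j=\gamma^{-j}B_0$, so that $\gamma^{\,j-\frac{n+1}{2}}B_j=\gamma^{-\frac{n+1}{2}}B_0$ takes the same value for every $j$.

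To prove the degree-shifting identity I would run the spiral argument one further step, treating $\dot a_0(1)$ as a raising operator. Put $C=[\,\dot x_0^+(n-j),\,\dot x_0^-(j)\,]$, an element of total degree $n$. The degree-$n$ instance of \eqref{e:comm4} is available by the inductive hypothesis, and since $\dot\varphi_0(n)=0$ for $n>0$ it shows $C$ is a scalar multiple of $\dot\phi_0(n)$ (explicitly $C=\gamma^{(n-2j)/2}(q-q^{-1})^{-1}\dot\phi_0(n)$); hence $[\,\dot a_0(1),\,C\,]=0$ by \eqref{b18}. Expanding this vanishing bracket by the Jacobi rule $[\,\dot a_0(1),[b,c]\,]=[\,[\dot a_0(1),b],\,c\,]+[\,b,\,[\dot a_0(1),c]\,]$ and inserting the $r=1$ case of \eqref{e:comm2} — note $a_{00}=2$ and $b_{00}=0$, so no power of $p$ appears — namely
\begin{equation*}
[\,\dot a_0(1),\,\dot x_0^+(m)\,]=[2]\gamma^{-\frac12}\dot x_0^+(m+1),\qquad [\,\dot a_0(1),\,\dot x_0^-(m)\,]=-[2]\gamma^{\frac12}\dot x_0^-(m+1),
\end{equation*}
collapses the expansion to $0=[2]\gamma^{-\frac12}B_j-[2]\gamma^{\frac12}B_{j+1}$, i.e. $B_j=\gamma B_{j+1}$. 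The raising relations used here hold for the intermediate modes by the inductive hypothesis and for the extremal modes by the defining relations \eqref{a3}--\eqref{a4} and \eqref{b2}; in particular the cases $j=0$ and $j=n$ are precisely the definitions of the new generators $\dot x_0^+(n+1)$ and $\dot x_0^-(n+1)$.

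It remains to identify the common value. The definition \eqref{a5} with $k=n+1$ reads $\dot\phi_0(n+1)=(q-q^{-1})\gamma^{(1-n)/2}[\,\dot x_0^+(n),\,\dot x_0^-(1)\,]$, that is $(q-q^{-1})^{-1}\dot\phi_0(n+1)=\gamma^{\,1-\frac{n+1}{2}}B_1$, which is exactly the $j=1$ term of the chain just established; therefore every weighted term equals $(q-q^{-1})^{-1}\dot\phi_0(n+1)$, proving \eqref{b20}, and \eqref{b21} follows by $\iota$ as noted.

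I expect the only genuine subtlety to be the bookkeeping of the induction: the argument consumes the degree-$n$ case of \eqref{e:comm4} (to force $C$ proportional to $\dot\phi_0(n)$ and invoke \eqref{b18}) together with the raising relation \eqref{e:comm2} at every mode up to $n$, so the lemma has to sit inside the degree induction with those facts already in hand. Once that is granted, the remaining work is routine $\gamma$-accounting, which is why the authors regard the statement as clear.
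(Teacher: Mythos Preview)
Your argument is correct and is precisely the natural fleshing-out of the paper's one-line proof, which merely asserts that \eqref{b21} follows from \eqref{b20} via the Chevalley anti-involution $\iota$ and that \eqref{b20} ``follows from inductive hypothesis''. Your mechanism --- applying $\dot a_0(1)$ to the degree-$n$ commutator $C=[\dot x_0^+(n-j),\dot x_0^-(j)]$, using \eqref{b18} to kill the result, and then invoking the definitions \eqref{a3}--\eqref{a4} as the raising relations --- is exactly how the paper treats analogous steps elsewhere (compare the proofs of \eqref{b9}, \eqref{b25}, \eqref{b29}), so there is no meaningful divergence in approach.
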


\begin{proof}\, In fact, \eqref{b21} can be obtained from \eqref{b20} by the Chevally anti-involution $\iota$. \eqref{b20} follows from inductive hypothesis.
%\begin{equation*}
%\begin{split}
%&\bigl[\,x_0^+(n+1),\,x_0^-(0)\,\bigr]
%=[2]^{-1}\gamma^{\frac{1}{2}}\,\bigl[\,[a_0(1),\,x_0^+(n)],\,x_0^-(0)\,\bigr]\\
%=&[2]^{-1}\gamma^{\frac{1}{2}}\,\Bigl(\bigl[\,[a_0(1),\,x_0^-(0)],\,x_0^+(n)\,\bigr]
%+\bigl[\,a_0(1),\,[x_0^+(n),\,x_0^-(0)]\,\bigr]\Bigr)\\
%=&\gamma \bigl[\,x_0^+(n),\,x_0^-(1)\bigr]=\cdots
%\end{split}
%\end{equation*}
%which is compatible with  the defining relation \eqref{e:comm4}.
\end{proof}

We turn to the following proposition.

\begin{prop}\label{p10} From the above constructions, we have the following relations for $\epsilon=\pm$ or $\pm1$, $-n+1\leqslant l_1\leqslant n-1$ and $-n\leqslant l_2\leqslant n$,
\begin{align}\label{b22}
&K_ix_{0}^{\pm}(\epsilon(n+1))K_i^{-1}=q^{a_{i0}}x_{0}^{\pm}(\epsilon(n+1)),\\\label{b23}
&q^{d_1}x_{0}^{\pm}(\epsilon(n+1))q^{-d_1}=q^{\epsilon(n+1)} x_{0}^{\pm}(\epsilon(n+1)),\\\label{b24}
&q^{d_2}x_{0}^{\pm}(\epsilon(n+1))q^{-d_2}=q x_{0}^{\pm}(\epsilon(n+1)),\\\label{b25}
&[\,x_{0}^{\pm}(n+1),\,x_{0}^{\pm}(n-1)\,]_{q^{\pm2}}+[\,x_{0}^{\pm}(n),\,x_{0}^{\pm}(n)\,]_{q^{\pm2}}=0,\\\label{b26}
&[\,x_{0}^{\pm}(n+2),\,x_{0}^{\pm}(n-1)\,]_{q^{\pm2}}+[\,x_{0}^{\pm}(n),\,x_{0}^{\pm}(n+1)\,]_{q^{\pm2}}=0,\\
\label{b27}
&[\,x_{0}^{\pm}(n+1),\,x_{0}^{\pm}(n)\,]_{q^{\pm2}}=0,\\
\label{b28}
&[\,x_0^{\pm}(n+1),\,x_0^{\pm}(l_1)\,]_{q^{\pm2}}+[\,x_{0}^{\pm}(l_1+1),\,x_{0}^{\pm}(n)\,]_{q^{\pm2}}=0,\\\label{b29}
&[\,x_{0}^+(n+1),\,x_{0}^-(l_2)\,]=\frac{\gamma^{\frac{n+1-l}{2}}\phi_0(n+l_2+1)}{q-q^{-1}},\\\label{b30}
&[\,x_0^+(n+1),\,x_0^-(-n-1)\,]=\frac{\gamma^{n+1} K_0-\gamma^{-n-1} K_0^{-1}}{q-q^{-1}}.
\end{align}
\end{prop}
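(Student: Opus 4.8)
The plan is to split the relations into three groups and to run the degree induction by acting with $\mathrm{ad}\,\dot a_0(\pm1)$ and $\mathrm{ad}\,\dot a_0(\pm2)$ on the relations already established in degree $\le n$, using throughout the derivation identity $[C,[A,B]_v]=[[C,A],B]_v+[A,[C,B]]_v$ (a special case of \eqref{v1}--\eqref{v2}) together with the degree-$m$ raising rules $[\dot a_0(1),\dot x_0^+(m)]=[2]\gamma^{-1/2}\dot x_0^+(m+1)$ and $[\dot a_0(2),\dot x_0^+(m)]=\frac{[4]}{2}\gamma^{-1}\dot x_0^+(m+2)$, which are the $i=j=0$ instances of \eqref{e:comm2} furnished by the inductive hypothesis (equivalently, by the definition \eqref{a4}). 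I would first dispose of the weight relations \eqref{b22}--\eqref{b24}: since $\dot x_0^{\pm}(\epsilon(n+1))=\pm[2]^{-1}\gamma^{\pm1/2}[\dot a_0(\epsilon),\dot x_0^{\pm}(\epsilon n)]$ by \eqref{a4}, conjugating by $K_i$, $q^{d_1}$, $q^{d_2}$ and moving the conjugation inside the bracket reduces each identity to \eqref{b1} for $\dot a_0(\epsilon)$ and to the degree-$n$ case for $\dot x_0^{\pm}(\epsilon n)$; the scalars $q^{a_{i0}}$, $q^{\epsilon(n+1)}$, $q$ then drop out immediately.

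The technical core is the bootstrap for the Serre-type relations \eqref{b25}--\eqref{b27}, which I would derive in exactly the pattern used for \eqref{b9}, \eqref{b16}, \eqref{b17} in the base case. Starting from the degree-$n$ vanishing bracket $[\dot x_0^+(n),\dot x_0^+(n-1)]_{q^2}=0$ (the $i=j=0$, $k=l=n-1$ instance of \eqref{e:comm3} in the inductive hypothesis), acting with $\mathrm{ad}\,\dot a_0(1)$ and expanding by the derivation identity yields $[2]\gamma^{-1/2}\big([\dot x_0^+(n+1),\dot x_0^+(n-1)]_{q^2}+[\dot x_0^+(n),\dot x_0^+(n)]_{q^2}\big)=0$, i.e. \eqref{b25}. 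Acting instead with $\mathrm{ad}\,\dot a_0(2)$ on the same bracket produces \eqref{b26}. Finally, acting with $\mathrm{ad}\,\dot a_0(1)$ on \eqref{b25} gives four terms; the two ``outer'' terms cancel by \eqref{b26} and the two ``diagonal'' terms coincide, leaving $2[2]\gamma^{-1/2}[\dot x_0^+(n+1),\dot x_0^+(n)]_{q^2}=0$, which is \eqref{b27}. The $\epsilon=-$ versions come for free from the Chevalley anti-involution $\iota$.

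I expect the general-gap relation \eqref{b28} to be the main obstacle, and I would handle it by a secondary descending induction on $l_1$ with base case $l_1=n-1$ (which is \eqref{b25}). For the step I would act with $\mathrm{ad}\,\dot a_0(-1)$, whose rule lowers each degree by one, on the relation for parameter $l_1+1$; the derivation identity then rewrites the $l_1$-relation in terms of lower-gap relations from the present and previous degree, after cancelling the diagonal terms using \eqref{b27} and the inductive hypothesis. The delicate point is that $\mathrm{ad}\,\dot a_0(-1)$ also lowers the top index $n+1$ to $n$, so the resulting $\dot x_0^+(n)$-terms must be reorganized by the quantum-bracket identities \eqref{v1}--\eqref{v2}; tracking the $q$- and $\gamma$-powers through these reorganizations is the one genuinely fiddly computation, and I would carry it out by keeping all brackets in the normalized form $[\,\cdot\,,\cdot\,]_{q^{\pm2}}$ throughout.

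For the mixed relations \eqref{b29}--\eqref{b30} I would use the $\dot\phi_0/\dot\varphi_0$ machinery. Writing $\dot x_0^+(n+1)=[2]^{-1}\gamma^{1/2}[\dot a_0(1),\dot x_0^+(n)]$ and expanding $[\dot x_0^+(n+1),\dot x_0^-(l_2)]$ by the Jacobi/derivation identity reduces it to $[\dot a_0(1),[\dot x_0^+(n),\dot x_0^-(l_2)]]$ and $[\dot x_0^+(n),\dot x_0^-(l_2+1)]$; the inductive hypothesis identifies the inner brackets with $\dot\phi_0$ at the appropriate degree (up to a $\gamma$-power), while \eqref{b18} lets $\dot a_0(1)$ pass through $\dot\phi_0$, so that everything collapses to $\gamma^{(n+1-l_2)/2}\phi_0(n+l_2+1)/(q-q^{-1})$, which is \eqref{b29}; the chain of equalities in \eqref{b20} supplies the required normalization, and the cases $l_2>0$ are organized alongside the newly built negative-degree generators $\dot x_0^-(n+1)$. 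The balanced case \eqref{b30} is the specialization in which the two degrees cancel, so the $\phi_0$-term degenerates to the Cartan element, reproducing $(\gamma^{n+1}K_0-\gamma^{-n-1}K_0^{-1})/(q-q^{-1})$ exactly as \eqref{b11} was obtained in degree one. Once \eqref{b22}--\eqref{b30} are verified, the degree-$(n+1)$ generators satisfy all the Drinfeld relations \eqref{e:comm0}--\eqref{e:comm5}, completing the inductive step that feeds into the identification $\mathcal U_0\simeq U_q(sl_{n+1},tor)$ via Lemma \ref{L:iso}.
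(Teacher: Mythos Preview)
Your proposal is correct and matches the paper's proof almost step for step: the weight relations are immediate from \eqref{a4}, \eqref{b25} comes from $\mathrm{ad}\,\dot a_0(1)$ on $[\dot x_0^+(n),\dot x_0^+(n-1)]_{q^2}=0$, \eqref{b26} from $\mathrm{ad}\,\dot a_0(2)$ on the same, \eqref{b27} from $\mathrm{ad}\,\dot a_0(1)$ on \eqref{b25} followed by cancellation against \eqref{b26}, and \eqref{b29}--\eqref{b30} by expanding $\dot x_0^+(n+1)$ via \eqref{a4} and using \eqref{b18} plus the inductive hypothesis. The one place you diverge is \eqref{b28}: the paper simply applies $\mathrm{ad}\,\dot a_0(1)$ to the degree-$n$ relation $[\dot x_0^+(n),\dot x_0^+(l_1)]_{q^2}+[\dot x_0^+(l_1+1),\dot x_0^+(n-1)]_{q^2}=0$, and the two middle terms of the resulting four-term sum cancel \emph{directly} by the inductive hypothesis (with indices shifted by one), leaving exactly \eqref{b28} --- no quantum-bracket reorganization via \eqref{v1}--\eqref{v2} is needed. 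Your descending $\dot a_0(-1)$ variant works by the mirror of the same cancellation and is equally clean, so the ``genuinely fiddly computation'' you anticipate does not materialize. Finally, \eqref{b30} is not a specialization of \eqref{b29} (the index $l_2=-n-1$ is out of range) but a separate two-step expansion using both $\dot a_0(1)$ and $\dot a_0(-1)$, exactly parallel to \eqref{b11} as you note.
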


\begin{proof} We only check  the case ``+'', the case ``-'' can be obtained similarly. \eqref{b22}, \eqref{b23} and \eqref{b24} hold directly from the constructions.
To check \eqref{b25}, we have that by inductive hypothesis, % $$A_3=[\,x_0^{+}(n),\,x_0^{+}(n-1)\,]_{q^{2}}=0.$$
\begin{equation*}
\begin{split}
0&=[\,a_0(1), [\,x_0^{+}(n),\,x_0^{+}(n-1)\,]_{q^{2}}\,]\\
%&=\Big[\,a_0(1),\,[\,x_0^{+}(n),\,x_0^{+}(n-1)\,]_{q^{2}}\,\Big]\\
%&=\Bigl(\Big[\,[a_0(1),\,x_0^{+}(n)],\,x_0^{+}(n-1)\,\Big]_{q^{2}}+
%\Big[\,x_0^{+}(n),\,[a_0(1),\,x_0^{+}(n-1)\,]\,\Big]_{q^{2}}\Big)\\
&=[2]\gamma^{-\frac{1}{2}}\Big([\,x_0^{+}(n+1),\,x_0^{+}(n-1)\,]_{q^{2}}
+[\,x_0^{+}(n),\,x_0^{+}(n)\,]_{q^{2}}\Big),
\end{split}
\end{equation*}
which implies that $[\,x_0^{+}(n+1),\,x_0^{+}(n-1)\,]_{q^{2}}
+[\,x_0^{+}(n),\,x_0^{+}(n)\,]_{q^{2}}=0$.

Applying $a_0(2)$ to $[\,x_0^{+}(n),\,x_0^{+}(n-1)\,]_{q^{2}}=0$, we also get
%\begin{equation*}
%\begin{split}
%0&=[\,a_0(2),\,A_3\,]\\
%%&=\Big[\,a_0(2),\,[\,x_0^{+}(n),\,x_0^{+}(n-1)\,]_{q^{2}}\,\Big]\\
%&=\Bigl(\Big[\,[a_0(2),\,x_0^{+}(n)],\,x_0^{+}(n-1)\,\Big]_{q^{2}}+
%\Big[\,x_0^{+}(n),\,[a_0(2),\,x_0^{+}(n-1)\,]\,\Big]_{q^{2}}\Big)\\
%&=\frac{[4]}{2}\gamma^{-1}\Big([\,x_0^{+}(n+2),\,x_0^{+}(n-1)\,]_{q^{2}}
%+[\,x_0^{+}(n),\,x_0^{+}(n+1)\,]_{q^{2}}\Big),
%\end{split}
%\end{equation*}
$$[\,x_0^{+}(n+2),\,x_0^{+}(n-1)\,]_{q^{2}}
+[\,x_0^{+}(n),\,x_0^{+}(n+1)\,]_{q^{2}}=0.$$

Repeatedly applying $a_0(1)$ to brackets like $[\,x_0^{+}(m),\,x_0^{+}(n)\,]_{q^{2}}$, we can prove
\eqref{b27}-\eqref{b28}.

%For \eqref{b27}, denote that $A_4=[\,x_0^{+}(n+1),\,x_0^{+}(n-1)\,]_{q^{2}}
%+[\,x_0^{+}(n),\,x_0^{+}(n)\,]_{q^{2}}=0$ by \eqref{b25}. Therefore, it is easy to see that,
%\begin{equation*}
%\begin{split}
%0&=[\,a_0(1),\,A_4\,]\\
%%&=\Big[\,a_0(1),\,[\,x_0^{+}(n+1),\,x_0^{+}(n-1)\,]_{q^{2}}
%%+[\,x_0^{+}(n),\,x_0^{+}(n)\,]_{q^{2}}\,\Big]\\
%&=[2]\gamma^{-\frac{1}{2}}\Bigl([x_0^{+}(n+2),x_0^{+}(n-1)]_{q^{2}}+
%[x_0^{+}(n+1), x_0^{+}(n)]_{q^{2}}\\
%&\hskip1.85cm+[x_0^{+}(n+1), x_0^{+}(n)]_{q^{2}}
%+[x_0^{+}(n), x_0^{+}(n+1)]_{q^{2}}\Bigr)\\
%&=2[2]\gamma^{-\frac{1}{2}}[\,x_0^{+}(n+1),\,x_0^{+}(n)\,]_{q^{2}},\\
%\end{split}
%\end{equation*}
%where we have used \eqref{b26}.  It yields that $[\,x_0^{+}(n+1),\,x_0^{+}(n)\,]_{q^{2}}=0$.
%
%
%
%To check \eqref{b28}, note that for $-n+1\leqslant l_1\leqslant n-1$, $$A_5=[\,x_0^{+}(n),\,x_0^{+}(l_1)\,]_{q^{2}}+[\,x_0^{+}(l_1+1),\,x_0^{+}(n)\,]_{q^{2}}.$$
%It holds that $A_5=0$ by inductive hypothesis. Therefore we have that,
%\begin{equation*}
%\begin{split}
%0&=[\,a_0(1),\,A_5\,]\\
%&=\Big[\,a_0(1),\,[\,x_0^{+}(n),\,x_0^{+}(l_1)\,]_{q^{2}}+[\,x_0^{+}(l_1+1),\,x_0^{+}(n)\,]_{q^{2}}\,\Big]\\
%&=[2]\gamma^{-\frac{1}{2}}\Big([\,x_0^{+}(n+1),\,x_0^{+}(l_1)\,]_{q^{2}}
%+[\,x_0^{+}(l_1+1),\,x_0^{+}(n)\,]_{q^{2}}\Big),
%\end{split}
%\end{equation*}
%which yields \eqref{b28}.

In order to check \eqref{b29}, we consider that for $-n\leq l_2\leq n$,
\begin{align*}
&[\,x_{0}^+(n+1),\,x_0^-(l_2)\,]\\
&=[2]^{-1}\gamma^{\frac{1}{2}}\,\Bigl(\Big[\,\bigl[\,a_0(1),\,x_{0}^-(l_2)\,\bigr],\,
x_{0}^+(n)\,\Big]
+\Big[\,a_0(1),\,\bigl[\,x_{0}^+(n),\,x_{0}^-(l_2)\,\bigr]\,\Big]\,\Bigr)\\
&=-\gamma\,\bigl[\,x_0^-(l_2+1),\,x_{0}^+(n)\,\bigr]=\frac{\gamma^{\frac{n+1-l_2}{2}}\phi_0(n+l_2+1)}{q-q^{-1}}.
\end{align*}

 For \eqref{b30}, one has that,
\begin{equation*}
\begin{split}
&[\,x_0^+(n+1),\,x_0^-(-n-1)\,]\\
%&=-[2]^{-2}\,\Big[\,\bigl[\,a_0(1),\,x_0^+(n)\,\bigr],\,\bigl[\,a_0(-1),\,x_0^-(-n)\,\bigr]\,\Big]\\
&=-[2]^{-2}\Bigl(\Big[\bigl[a_0(1), [x_0^+(n), a_0(-1)] \bigr],\,x_0^-(-n)\Big]
+\Bigl[a_0(-1), \bigl[[a_0(1), x_0^-(-n)], x_0^+(n)\,\bigr]\Bigr]\Bigr)\\
%&=-[2]^{-1}\Bigl(\gamma^{-\frac{1}{2}}\Big[\bigl[a_0(1), x_0^+(n-1)\bigr], x_0^-(-n)\Big]
%+\gamma^{\frac{1}{2}}\Big[a_0(-1), \bigl[x_0^-(-n+1), x_0^+(n)\,\bigr]\Big]\Bigr)\\
%&=\Bigl(\gamma^{-1}\frac{\gamma^n K_0-\gamma^{-n}K_0^{-1}}{q-q^{-1}}+\gamma^nK_0\frac{\gamma-\gamma^{-1}}{q-q^{-1}}\Bigr)\\
&=\frac{\gamma^{n+1} K_0-\gamma^{-n-1} K_0^{-1}}{q-q^{-1}}.
\end{split}
\end{equation*}
\end{proof}

Denote that $\bar{\phi}_0(k)=\frac{K_0^{-1}}{q-q^{-1}}\phi_0(k)$ and $\bar{{\varphi}}_0(-k)=\frac{K_0}{q-q^{-1}}{\varphi}_0(-k)$, we have the following relations.
\begin{prop} \label{p11'} The following relations hold for $d=\gamma^{-\frac{1}{2}}q^2$,
\begin{align}
&[\,\bar{\phi}_0(r),\,x_{0}^{+}(m)\,]=[2]\gamma^{-\frac{1}{2}}\Bigl(\sum\limits_{t=1}^{r-1}(q-q^{-1}){d}^{t-1}x_{0}^{+}(m+t)\bar{\phi}_0(r-t)
+d^{r-1}x_{0}^{+}(r+m)\Bigr),\\
&[\,\bar{{\varphi}}_0(-r),\,x_{0}^{-}(-m)\,]\\\nonumber
&\hskip1.5cm=-[2]\gamma^{-\frac{1}{2}}\Bigl(\sum\limits_{t=1}^{r-1}(q-q^{-1}){d}^{t-1}\bar{{\varphi}}_0(-r+t)x_{0}^{-}(-m-t)
+d^{r-1}x_{0}^{-}(-r-m)\Bigr).
\end{align}
\end{prop}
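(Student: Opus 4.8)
The plan is to read the two identities as the mode expansion of a single generating-function relation between the Cartan current at node $0$ and the Drinfeld current $\dot{x}_0^{+}$, and to prove that relation by exponentiating the action of the imaginary root vectors $\dot{a}_0(\ell)$. Introduce the formal series $\bar\phi_0(z)=\sum_{r\geq 0}\bar{\dot{\phi}}_0(r)z^{-r}$ and $x_0^{+}(w)=\sum_{m}\dot{x}_0^{+}(m)w^{-m}$. Combining the exponential definition of $\dot{\phi}_0(r)$ in terms of the $\dot{a}_0(\ell)$ (the relations behind \eqref{a7}--\eqref{a8}) with $\bar{\dot{\phi}}_0(r)=\frac{K_0^{-1}}{q-q^{-1}}\dot{\phi}_0(r)$ and the fact that $K_0$ commutes with every $\dot{a}_0(\ell)$ by \eqref{b1}, one gets the clean form
\[
(q-q^{-1})\,\bar\phi_0(z)=\exp\Bigl((q-q^{-1})\sum_{\ell\geq1}\dot{a}_0(\ell)\,z^{-\ell}\Bigr).
\]
Since $\bar{\dot{\phi}}_0(0)=(q-q^{-1})^{-1}$ is central, only $r\geq1$ is at stake.

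The key input is the full commutator of an imaginary root vector with a Drinfeld generator,
\[
[\,\dot{a}_0(\ell),\,\dot{x}_0^{+}(m)\,]=\frac{[2\ell]}{\ell}\,\gamma^{-\ell/2}\,\dot{x}_0^{+}(\ell+m),\qquad \ell\geq1,
\]
whose cases $\ell=1,2$ are exactly \eqref{b2} and \eqref{b12}--\eqref{b13} (note $\tfrac{[4]}{2}=\tfrac{[2\cdot2]}{2}$), and which is the node-$0$ instance of \eqref{e:comm2}. I would establish the general $\ell$ by a separate induction, repeatedly bracketing with $\dot{a}_0(1)$ and using that the $\dot{a}_0(\ell)$ ($\ell>0$) commute among themselves; this is the content promised in Prop.~\ref{p11}. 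In series form it reads $[\dot{a}_0(\ell),x_0^{+}(w)]=\frac{[2\ell]}{\ell}\gamma^{-\ell/2}w^{\ell}\,x_0^{+}(w)$, i.e. each mode acts on $x_0^{+}(w)$ as multiplication by a scalar.

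Because each $\dot{a}_0(\ell)$ acts on $x_0^{+}(w)$ by a scalar, the exponential conjugates the current by the exponential of the summed scalar, giving
\[
\bar\phi_0(z)\,x_0^{+}(w)=\exp\Bigl((q-q^{-1})\sum_{\ell\geq1}\tfrac{[2\ell]}{\ell}\gamma^{-\ell/2}(w/z)^{\ell}\Bigr)\,x_0^{+}(w)\,\bar\phi_0(z).
\]
Using $(q-q^{-1})[2\ell]=q^{2\ell}-q^{-2\ell}$ and $\sum_{\ell}x^{\ell}/\ell=-\log(1-x)$, the scalar prefactor collapses to $\frac{z-\gamma^{-1/2}q^{-2}w}{z-\gamma^{-1/2}q^{2}w}$; this is exactly where $d=\gamma^{-1/2}q^{2}$ enters, the $q^{2}$ being $q^{a_{00}}$ and $\gamma^{-1/2}$ the central factor in \eqref{e:comm2}. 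Subtracting $x_0^{+}(w)\bar\phi_0(z)$ and simplifying the difference of this rational function and $1$ yields
\[
[\,\bar\phi_0(z),\,x_0^{+}(w)\,]=\frac{\gamma^{-1/2}(q-q^{-1})[2]\,w}{z-dw}\,x_0^{+}(w)\,\bar\phi_0(z).
\]
Expanding $\frac{w}{z-dw}=\sum_{t\geq1}d^{t-1}w^{t}z^{-t}$ and reading off the coefficient of $z^{-r}w^{-m}$ gives $[\bar{\dot{\phi}}_0(r),\dot{x}_0^{+}(m)]=[2]\gamma^{-1/2}(q-q^{-1})\sum_{t=1}^{r}d^{t-1}\dot{x}_0^{+}(m+t)\bar{\dot{\phi}}_0(r-t)$; separating the top term $t=r$, where $\bar{\dot{\phi}}_0(0)=(q-q^{-1})^{-1}$, reproduces the stated first identity. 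Equivalently one may induct on $r$ via the recursion $r\,\bar{\dot{\phi}}_0(r)=\sum_{\ell=1}^{r}\ell\,\dot{a}_0(\ell)\bar{\dot{\phi}}_0(r-\ell)$ obtained from the logarithmic derivative of the exponential. The second identity then follows by applying the Chevalley anti-involution $\iota$, which sends $\dot{x}_0^{\pm}(k)\mapsto\dot{x}_0^{\mp}(-k)$, $\dot{\phi}_0(r)\mapsto\dot{\varphi}_0(-r)$, $q\mapsto q^{-1}$, $\gamma\mapsto\gamma^{-1}$ (hence $d\mapsto d^{-1}$) and reverses products: one checks $\bar{\dot{\phi}}_0(r)\mapsto-\bar{\dot{\varphi}}_0(-r)$ and transports the first identity through $\iota$.

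The main obstacle is securing that key input, namely $[\dot{a}_0(\ell),\dot{x}_0^{+}(m)]=\frac{[2\ell]}{\ell}\gamma^{-\ell/2}\dot{x}_0^{+}(\ell+m)$ for \emph{all} $\ell\geq1$ and all $m\in\mathbb{Z}$. In $\mathcal{U}_0$ only $\dot{a}_0(\pm1)$ (and $\dot{a}_0(\pm2)$) are given directly, while the higher $\dot{a}_0(\ell)$ are defined through $\dot{\phi}_0$, so this relation must be pushed through the degree induction of the preceding propositions and then organized into a clean induction on $\ell$. A secondary technical point is the careful handling of the formal series when $m<0$, where $x_0^{+}(w)$ pairs against $\dot{\varphi}_0$ rather than $\dot{\phi}_0$; this is most safely verified coefficient-by-coefficient rather than by purely symbolic manipulation of the logarithm.
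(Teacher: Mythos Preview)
Your generating-function argument is clean once the ``key input'' $[\dot a_0(\ell),\dot x_0^{+}(m)]=\tfrac{[2\ell]}{\ell}\gamma^{-\ell/2}\dot x_0^{+}(\ell+m)$ is in hand, but in the logical architecture of the paper that input is not available when Proposition~\ref{p11'} is invoked. The higher $\dot a_0(\ell)$ are \emph{defined} through the $\dot\phi_0(r)$ via \eqref{a7}--\eqref{a8}, and the commutator $[\dot a_0(n+1),\dot x_0^{+}(-1)]$ is exactly what Proposition~\ref{p11} establishes---\emph{using} Proposition~\ref{p11'}. So feeding the exponential identity for $\bar\phi_0(z)$ together with the full family of $[\dot a_0(\ell),\,\cdot\,]$ relations back into the proof of Proposition~\ref{p11'} is circular at the inductive step $r=n+1$.

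Your proposed repair, a ``separate induction'' on $\ell$ by repeatedly bracketing with $\dot a_0(1)$, does not advance the index $\ell$. From $[\dot a_0(1),\dot x_0^{+}(m)]=[2]\gamma^{-1/2}\dot x_0^{+}(m+1)$ and the Jacobi identity one only gets
\[
[\dot a_0(1),[\dot a_0(\ell),\dot x_0^{+}(m)]]
=[\dot a_0(\ell),[\dot a_0(1),\dot x_0^{+}(m)]]
=[2]\gamma^{-1/2}[\dot a_0(\ell),\dot x_0^{+}(m+1)],
\]
which shifts $m$, not $\ell$; nothing about $\dot a_0(\ell+1)$ appears. (And the needed commutativity $[\dot a_0(1),\dot a_0(\ell)]=0$ is itself part of the degree induction, not a free fact.) The paper sidesteps all of this by never touching the exponential form: it uses the \emph{primitive} definition \eqref{a5}, $\bar{\dot\phi}_0(r)=\gamma^{(2-r)/2}K_0^{-1}[\dot x_0^{+}(r-1),\dot x_0^{-}(1)]$, together with the $q$-bracket identity and the already-proved relations \eqref{f:comm4}, \eqref{b28}, to obtain the one-step recursion
\[
[\bar{\dot\phi}_0(r),\dot x_0^{+}(m)]
= d\Bigl([\bar{\dot\phi}_0(r-1),\dot x_0^{+}(m+1)]
+(q-q^{-1})q^{-2}[2]\,\dot x_0^{+}(m+1)\bar{\dot\phi}_0(r-1)\Bigr),
\]
and then iterates it. That recursion is the content you should prove directly; once it is in place, your generating-function identity is a pleasant repackaging, but it is a consequence rather than a proof.
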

\begin{proof} By induction, it is enough to check the case of $m=-1$, other cases are similar.
\begin{align*}
&[\,\bar{\phi}_0(n+1),\,x_{0}^{+}(-1)\,]=\gamma^{\frac{1-n}{2}}K_0^{-1}\Bigl(\bigl[\,x_0^+(n),[x_0^-(1),\,x_{0}^{+}(-1)]\bigr]_{q^{2}}+
\bigl[[\,x_0^+(n),\,x_{0}^{+}(-1)]_{q^{2}},x_0^-(1)\bigr]\Bigr)\\
%&=\gamma^{-\frac{n+1}{2}}[2]x_{0}^{+}(n)-\gamma^{-\frac{n+1}{2}}[2]x_{0}^{+}(n)
%-\gamma^{-\frac{1}{2}}\frac{K_0^{-1}}{q-q^{-1}}[\,x_0^+(0),\phi_0(n)]_{q^{2}}\\
%&=d\,[\,\bar{\phi}_0(n),\,x_0^+(0)]_{q^{-4}}\\
&=d\,\Bigl([\,\bar{\phi}_0(n),\,x_0^+(0)]+(q-q^{-1})q^{-2}[2]x_0^+(0)\bar{\phi}_0(n)\Bigl)=\cdots\\
%&=d^2\,\Bigl([\,x_0^+(0),\bar{\phi}_0(n)]+(q-q^{-1})q^{-2}[2]x_0^+(1)\bar{\phi}_0(n-1)\Bigr)\\
%&\hskip1cm+d(q-q^{-1})q^{-2}[2]x_0^+(0)\bar{\phi}_0(n)\\
&=d^n\,\Bigl([\,\bar{\phi}_0(1),\,x_0^+(n-1)]+(q-q^{-1})q^{-2}[2]x_0^+(n-1)\bar{\phi}_0(1)\Bigl)\\
&\hskip1cm+\sum\limits_{t=1}^{n-1}{d}^t(q-q^{-1})q^{-2}[2]x_0^+(t-1)\bar{\phi}_0(n+1-t)\\
%&=\sum\limits_{t=1}^{n}{d}^t(q-q^{-1})q^{-2}[2]x_0^+(t-1)\bar{\phi}_0(n+1-t)+d^n[2]\gamma^{-\frac{1}{2}}x_0^+(n)\\
&=[2]\gamma^{-\frac{1}{2}}\sum\limits_{t=1}^{n}{d}^{t-1}(q-q^{-1})x_0^+(t-1)\bar{\phi}_0(n+1-t)+d^nx_0^+(n).
\end{align*}
\end{proof}

Now we convert the relations to those with $a_0(n+1)$. First it is easily seen that,
\begin{align}
\label{a13}
 &ka_0(k)=k\bar{\phi}_0(k)-(q-q^{-1})\sum\limits_{t=1}^{k-1}t\bar{\phi}_0(k-t)a_0(t),\\
 &ka_0(-k)=k\bar{{\varphi}}_0(-k)-(q-q^{-1})\sum\limits_{t=1}^{k-1}ta_0(-t)\bar{{\varphi}}_0(-k+t).
\end{align}

\begin{prop} \label{p11} The following relations hold for $\epsilon=\pm$ or $\pm1$,
\begin{align}\label{b31}
&[\,a_{0}(\epsilon(n+1)),\,x_{0}^{\epsilon}(-\epsilon)\,]=\frac{[2(n+1)]}{n+1}\gamma^{-\frac{n+1}{2}}x_{0}^{\epsilon}(\epsilon n),\\\label{b32}
&[\,a_{0}(\epsilon(n+1)),\,x_{0}^{-\epsilon}(-\epsilon n)\,]=\frac{[2(n+1)]}{n+1}\gamma^{-\frac{n+1}{2}}x_{0}^{-\epsilon}(\epsilon),\\\label{b33}
&[\,a_{0}(n+1),\,a_0(-n-1)\,]=\frac{[2(n+1)]}{n+1}\frac{\gamma^{n+1}-\gamma^{-(n+1)}}{q-q^{-1}}.
\end{align}
\end{prop}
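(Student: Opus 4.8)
The plan is to prove all three relations (\ref{b31})--(\ref{b33}) for the inductive step at degree $n+1$, reducing each to known lower-degree relations via a single well-chosen double bracket, exactly in the spirit of Proposition \ref{p10}. By the Chevalley anti-involution $\iota$ it suffices to treat the case $\epsilon=+$, since $\iota$ exchanges $\dot a_0(n+1)\leftrightarrow \dot a_0(-n-1)$ and the $+/-$ generators, carrying each relation to its mirror. The engine throughout is the recursive definition (\ref{a13}) expressing $\dot a_0(k)$ in terms of $\bar{\dot\phi}_0(k)$ and lower $\dot a_0(t)$, together with the $\bar{\dot\phi}_0$--bracketing formula of Proposition \ref{p11'}, which tells us precisely how $\bar{\dot\phi}_0(r)$ acts on $\dot x_0^+(m)$.

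First I would establish (\ref{b31}). Starting from the bracket $[\,\bar{\dot\phi}_0(n+1),\,\dot x_0^+(-\epsilon)\,]$ supplied by Proposition \ref{p11'}, I would use (\ref{a13}) to trade $\bar{\dot\phi}_0(n+1)$ for $\dot a_0(n+1)$ plus a sum of products $\bar{\dot\phi}_0(n+1-t)\dot a_0(t)$ with $t\le n$. The lower terms are controlled: each factor $\dot a_0(t)$ with $t\le n$ acts on $\dot x_0^+$ by the inductive hypothesis, and $\bar{\dot\phi}_0(n+1-t)$ acts by Proposition \ref{p11'} with $r\le n$. The telescoping sums appearing in Proposition \ref{p11'} should collapse against the defining sum in (\ref{a13}), and after collecting the $q$-integer coefficients I expect to read off the clean constant $\frac{[2(n+1)]}{n+1}\gamma^{-\frac{n+1}{2}}$ on the right. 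Relation (\ref{b32}) follows by the same mechanism, acting instead on $\dot x_0^-(-\epsilon n)$ and using the conjugate formula in Proposition \ref{p11'}; alternatively it is the image of (\ref{b31}) under a degree shift built from (\ref{a4}).

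For (\ref{b33}) I would compute $[\,\dot a_0(n+1),\,\dot a_0(-n-1)\,]$ by first rewriting one of the factors, say $\dot a_0(-n-1)$, through the Serre-type expression (\ref{b6})--(\ref{b21}): by (\ref{b21}), $\dot\varphi_0(-n-1)$ (and hence $\dot a_0(-n-1)$ via (\ref{a13})) is built from brackets $[\,\dot x_0^+(-1),\,\dot x_0^-(-n)\,]$ and the like. Then applying $\dot a_0(n+1)$ and invoking the already-proved (\ref{b31})--(\ref{b32}) converts everything into a commutator that lands in $U_q^0(\mathfrak g)$, producing the expected $\frac{[2(n+1)]}{n+1}\frac{\gamma^{n+1}-\gamma^{-(n+1)}}{q-q^{-1}}$. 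The identity (\ref{b18}), $[\,\dot a_0(1),\,\dot\phi_0(n)\,]=0$, will be needed to guarantee that the intermediate $\bar{\dot\phi}_0$ terms commute past in the right places.

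\medskip

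The main obstacle I anticipate is purely combinatorial rather than conceptual: verifying that the $q$-integer coefficients assemble into exactly $\frac{[2(n+1)]}{n+1}$. The recursion (\ref{a13}) and Proposition \ref{p11'} both carry geometric-type sums in $d=\gamma^{-1/2}q^2$ with factors $(q-q^{-1})$ and weights $t$, and the delicate point is that these two sources of sums must cancel so as to leave a single-term answer with the correct $q$-number. I would handle this by an explicit induction on $n$, checking the base cases $n=1$ (which are the already-verified (\ref{b3}), (\ref{b14}), (\ref{b15})) and then matching the coefficient of each $\dot x_0^{\pm}$ and each central term degree by degree, using the identity $[2(n+1)]=[2]\,[\text{(telescoped sum)}]$ that the $d$-powers in Proposition \ref{p11'} are designed to produce. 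Once the coefficient bookkeeping is pinned down, the compatibility with the quantum toroidal relations (\ref{e:comm1}) is immediate, completing the inductive step.
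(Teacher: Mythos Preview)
Your proposal is correct and follows essentially the same approach as the paper. The paper likewise uses the recursion (\ref{a13}) together with Proposition \ref{p11'} to reduce $[\,\dot a_0(n+1),\,\dot x_0^+(-1)\,]$ to the single term $\frac{[2(n+1)]}{n+1}\gamma^{-\frac{n+1}{2}}\dot x_0^+(n)$, declares (\ref{b32}) to be analogous, and for (\ref{b33}) rewrites $\dot a_0(-n-1)$ via (\ref{a8}) and (\ref{a6}) as $\gamma^{\frac{n-1}{2}}K_0[\,\dot x_0^+(-1),\,\dot x_0^-(-n)\,]$ plus lower partition terms, then applies (\ref{b31})--(\ref{b32}) inside the bracket exactly as you describe; the combinatorial cancellation you flag as the main obstacle is indeed asserted rather than displayed in the paper as well.
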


\begin{proof} %We also use the case of $\epsilon=+$ to show how to prove these relations.
Note that \eqref{b32} is similar to \eqref{b31}, so
we only worry about \eqref{b31}. It follows from Proposition \ref{p11'} that
\begin{equation*}
\begin{split}
&[\,a_{0}(n+1),\,x_{0}^{+}(-1)\,]\\
&=\bigl[\bar{\phi}_0(n+1),\,x_{0}^{+}(-1)\bigr]-\frac{(q-q^{-1})}{n+1}\sum\limits_{t=1}^{n}t\bigl[\bar{\phi}_0(n+1-t)a_0(t),\,x_{0}^{+}(-1)\bigr]\\
&=\frac{[2(n+1)]}{(n+1)!}\gamma^{-\frac{n+1}{2}}x_{0}^{+}(n).
\end{split}
\end{equation*}

For \eqref{b33}, it follows from \eqref{b31}-\eqref{b32} and \eqref{a7} by inductive hypothesis that
\begin{align*}
&[\,a_{0}(n+1),\,a_{0}(-n-1)\,]\\
&=\gamma^{\frac{n-1}{2}}K_0[\,a_{0}(n+1),\,[\,x_0^+(-1),\,x_0^-(-n)\,]]-\sum\limits_{\substack{\lambda\vdash n+1\\ \lambda\neq(n+1)}}\frac{(q-q^{-1})^{l(\lambda)-1}}{m_{\lambda}!}\bigl[\,a_{0}(n+1),\,a_0(\lambda)\bigr]\\
&=\gamma^{\frac{n-1}{2}}K_0\bigl([\,[a_{0}(n+1),\,x_0^+(-1)],\,x_0^-(-n)\,]
+[\,x_0^+(-1),\,[a_{0}(n+1),\,x_0^-(-n)\,]\bigr)\\
&=\frac{[2(n+1)]}{n+1}\frac{\gamma^{n+1}-\gamma^{-(n+1)}}{q-q^{-1}}.
\end{align*}
\end{proof}

From Proposition \ref{p10} and Proposition \ref{p11}, we have checked that all  elements $x_0^{\pm}(\epsilon(n+1))$, $a_0(\epsilon(n+1))$  satisfy the defining relations \eqref{e:comm0}--\eqref{e:comm0c} and \eqref{e:Dr3}--\eqref{e:Dr11}. By induction it follows that
all  elements $x_0^{\pm}(\epsilon k),\,a_0(\epsilon k)$ for $k\in \mathbb{Z}/\{0\}$ satisfy the defining relations
\eqref{e:comm0}-\eqref{e:comm0c} and \eqref{e:Dr3}--\eqref{e:Dr11}. In other words, the algebra ${\mathcal U}_0$ contains a subalgebra $U_q(\hat{sl_2})_0$ isomorphic to $U_q(\hat{sl_2})$.

Next we will show that the quantum toroidal algebra ${\mathcal U}_0$ also contains another 
subalgebra $U_q(\hat{sl_2})_1$ isomorphic to $U_q(\hat{sl_2})$.
i.e. the subalgebra generated by $x_1^{+}(-1)$, $x_1^{-}(1)$ etc satisfy exactly the same relations as those of $x_0^{+}(-1)$, $x_0^{-}(1)$
(cf. Prop. \ref{p13}). Moreover, we will also derive the relations between $U_q(\hat{sl_2})_0$ and $U_q(\hat{sl_2})_1$.

For $\epsilon=\pm1$ or $\pm$, we define that
\begin{gather}\label{a15}
x_1^{\pm}(\epsilon)=\pm\gamma^{\pm\frac{1}{2}}\,\bigl[\,a_0(\epsilon),\,x_1^{\pm}(0)\,\bigr]\in
{\mathcal U}_0,\\\label{a16}
a_1(1)=\gamma^{\frac{1}{2}}K_1^{-1}\,\bigl[\,x_{1}^+(0),\,x_1^{-}(1)\,\bigr]\in
{\mathcal U}_0,\\\label{a17}
a_1(-1)=\gamma^{-\frac{1}{2}}K_1\,\bigl[\,x_{1}^+(-1),\,x_1^{-}(0)\,\bigr]\in
{\mathcal U}_0.
\end{gather}

Similar to Lemma \ref{p2}, we have the following relations.

\begin{lemm}\label{l5} It is easy to see that for $\epsilon=\pm 1$,
\begin{align}\label{bb16}
&\bigl[\,x_1^+(\epsilon),\,x_1^-(0)\,\bigr]=\gamma \bigl[\,x_1^+(0),\,x_1^-(\epsilon)\,\bigr],\\
%\label{bb17}
%&\bigl[\,x_1^+(0),\,x_1^-(-1)\,\bigr]=\gamma^{-1} \bigl[\,x_1^+(-1),\,x_1^-(0)\,\bigr]\\
&\bigl[\,a_1(1), a_1(-1)\,\bigr]=[2]\frac{\gamma-\gamma^{-1}}{q-q^{-1}},\\
\label{bb18}
&a_1(1)=K_1^{-1}\gamma^{\frac{1}{2}}\bigl[\,x_1^+(0),\,x_1^-(1)\,\bigr]=K_1^{-1}\gamma^{-\frac{1}{2}} \bigl[\,x_1^+(1),\,x_1^-(0)\,\bigr],\\
\label{bb19}
&a_1(-1)=K_1\gamma^{-\frac{1}{2}}\bigl[\,x_1^+(-1),\,x_1^-(0)\,\bigr]=K_1\gamma^{\frac{1}{2}} \bigl[\,x_1^+(0),\,x_1^-(-1)\,\bigr].
\end{align}
\end{lemm}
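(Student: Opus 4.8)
The four displayed relations assert that the elements $\dot{x}_1^{\pm}(\pm 1)$ and $\dot{a}_1(\pm 1)$, built from the index-$1$ Chevalley-type generators exactly as the index-$0$ ones were built in Lemma \ref{p2}, satisfy the $\widehat{sl}_2$ relations: a ``balance'' identity \eqref{bb16}, the commutator \eqref{bb18}--\eqref{bb19} giving $\dot{a}_1(\pm1)$ in two equivalent forms, and the bracket $[\dot{a}_1(1),\dot{a}_1(-1)]=[2](\gamma-\gamma^{-1})/(q-q^{-1})$.

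The plan is to reduce everything to the index-$0$ case already established in Lemma \ref{p2}, rather than redo the Drinfeld-bracket bookkeeping from scratch. First I would invoke the diagram automorphism $\tau$ introduced just before the PBW paragraph, which acts on the derived algebra $U_q'(sl_{n+1},tor)$ by $\tau(x_i^{\pm}(k))=x_{i+1}^{\pm}(k)$ and $\tau(a_i(k))=a_{i+1}(k)$. Applying $\tau$ to the defining formulas \eqref{a15}--\eqref{a17} for $\dot{x}_1^{\pm}(\epsilon)$, $\dot{a}_1(\pm1)$ and comparing with \eqref{a3}, \eqref{b6}, one sees that these index-$1$ elements are the $\tau$-images of the corresponding index-$0$ elements (up to the $p^{\mp\epsilon}$ normalization factor, which is central and inert). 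Hence each of \eqref{bb16}--\eqref{bb19} is literally the $\tau$-image of the already-proven relations \eqref{b4}, \eqref{b3}, \eqref{b6} of Lemma \ref{p2} and Proposition \ref{p1}; since $\tau$ is an algebra automorphism it preserves all bracket identities and fixes the scalars $\gamma$, $q$, $[2]$, so the four assertions follow immediately.

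There is one subtlety to check before invoking $\tau$: the extra $p$-factors. The defining relations \eqref{e:comm1}--\eqref{e:comm3} of the toroidal algebra carry the cocycle $p^{rb_{ij}}$, and the construction \eqref{a15} of $\dot{x}_1^{\pm}(\epsilon)$ was normalized by $\mp p^{-\epsilon}\gamma^{\pm 1/2}$ rather than $\pm[2]^{-1}\gamma^{\pm1/2}$ as in \eqref{a3}. So I would first record that $p$ is a nonzero central scalar commuting with everything, verify that the $p$-powers introduced by the normalizations cancel in each of the four target identities (for the diagonal brackets in \eqref{bb18}--\eqref{bb19} the $b_{ij}$-shifts sum to zero because the two factors carry opposite shifts), and only then transport the index-$0$ computation across $\tau$. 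This reduces the lemma to the already-established index-$0$ relations together with bookkeeping of central scalars.

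The main obstacle is precisely this normalization/cocycle bookkeeping: one must confirm that the $p^{\pm1}$ factors inserted in \eqref{a15} are exactly what is needed so that $\tau$ carries the index-$0$ relations to the stated index-$1$ relations with no residual $p$-power, and that the $\gamma$-powers match after $\tau$ fixes $\gamma$. Once the $p$- and $\gamma$-accounting is verified, the rest is an immediate transport of structure under the automorphism $\tau$; if for some reason $\tau$ cannot be used directly (e.g.\ because $q^{d_2}$ is absent from the derived algebra but appears in the relations being checked), I would instead repeat the short derivations of Lemma \ref{p2} verbatim with every ``$0$'' replaced by ``$1$'', using \eqref{b2} and \eqref{a15} in place of \eqref{b2} and \eqref{a3}, which is routine.
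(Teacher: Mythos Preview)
Your main approach via the diagram automorphism $\tau$ has a circularity problem. The automorphism $\tau$ is defined on the derived quantum toroidal algebra $U_q'(sl_{n+1},tor)$, not on $\mathcal{U}_0$; and Lemma~\ref{l5} is one of the steps used to \emph{establish} the isomorphism $\mathcal{U}_0\simeq U_q(sl_{n+1},tor)$ (Theorem~\ref{T:newgen}). So you cannot invoke $\tau$ on $\mathcal{U}_0$ without assuming what the surrounding argument is trying to prove. Nor can $\tau$ be defined directly on the presentation of $\mathcal{U}_0$: the distinguished generators $\dot{x}_0^{\pm}(\mp1)$ sit only at index $0$, and sending them to the \emph{derived} elements $\dot{x}_1^{\pm}(\mp1)$ of \eqref{a15} would require precisely the relations of Lemma~\ref{l5} and Proposition~\ref{p13} to check well-definedness. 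There is also a structural mismatch you overlooked: applying $\tau$ to $\dot{x}_0^{\epsilon}(\epsilon)=[2]^{-1}\gamma^{\epsilon/2}[\dot{a}_0(\epsilon),\dot{x}_0^{\epsilon}(0)]$ from \eqref{a3} would yield an expression through $\dot{a}_1(\epsilon)$, whereas \eqref{a15} builds $\dot{x}_1^{\pm}(\epsilon)$ from $\dot{a}_0(\epsilon)$. These two descriptions agree in $U_q(sl_{n+1},tor)$ by \eqref{e:comm2}, but not a priori in $\mathcal{U}_0$, so the transported identity is not the one being asserted.

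Your fallback is the correct route and is exactly what the paper means by ``similar to Lemma~\ref{p2}'', but it is not a literal $0\mapsto 1$ substitution, and you should not describe it as such. The index-$1$ elements are bootstrapped from the already-constructed index-$0$ Heisenberg generators $\dot{a}_0(\pm1)$ via \eqref{a15}. The verification of \eqref{bb16} proceeds by inserting \eqref{a15} for $\dot{x}_1^{+}(\epsilon)$, applying the Jacobi identity, and using $[\dot{a}_0(\epsilon),K_1^{\pm1}]=0$ together with \eqref{f:comm4}; the $p^{\mp\epsilon}$ normalizations from the two uses of \eqref{a15} then cancel against each other. Relations \eqref{bb18}--\eqref{bb19} are immediate reformulations of \eqref{bb16} via the definitions \eqref{a16}--\eqref{a17}, and $[\dot{a}_1(1),\dot{a}_1(-1)]$ follows by expanding both sides through \eqref{a16}--\eqref{a17} and reducing with \eqref{bb16} and \eqref{f:comm4}. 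This is the direct computation the paper intends; the automorphism shortcut is simply not available at this point in the argument.
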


%\begin{proof}\, \eqref{bb16} and \eqref{bb17} is almost same to that of Lemma  \ref{p2}, which can be checked directly.
%\end{proof}

\begin{prop}\label{p13} From the above constructions, we have the following relations, which are compatible to the defining relations of the quantum toroidal algebra $U_q(sl_{n+1},tor)$ for $\epsilon=\pm$ or $\pm1$.
\begin{align}\label{b34}
&K_ix_1^{\pm}(\epsilon)K_i^{-1}=q^{\pm a_{i1}}x_1^{\pm}(1),\quad
q^{d_1}x_1^{\pm}(\epsilon)q^{-d_1}=q^{\epsilon} x_1^{\pm}(1),\quad q^{d_2}x_1^{\pm}(\epsilon)q^{-d_2}=q^{\epsilon} x_1^{\pm}(1),\\\label{b35}
&\bigl[\,a_0(1), a_1(-1)\,\bigr]=-\frac{\gamma-\gamma^{-1}}{q-q^{-1}},\\\label{b36}
&[\,x_1^{\pm}(1),\,x_0^{\pm}(0)\,]_{q^{\mp1}}+[\,x_0^{\pm}(1),\,x_1^{\pm}(0)\,]_{q^{\mp1}}=0,
\\\label{b37}
&[\,x_1^{\epsilon}(-\epsilon),\,x_1^{\epsilon}(0)\,]_{q^{-2}}=0,\\\label{b38}
&[\,a_0(\epsilon),\,x_1^{\pm}(-\epsilon)\,]=-\gamma^{\mp\frac{1}{2}}x_1^{\pm}(0),\\
%\end{align}
%\begin{align}
\label{b39}
&[\,x_i^{-\epsilon}(0),\,x_1^{\epsilon}(-\epsilon)\,]=0, \ \ \mbox{for} \ \ i\neq 1, \\\label{b40}
&[\,x_1^+(-1),\,x_1^-(1)\,]=\frac{\gamma^{-1} K_1-\gamma K_1^{-1}}{q-q^{-1}},\\\label{b41}
&\bigl[\,x_0^{\pm}(0),\,[x_0^{\pm}(0),\,x_1^{\pm}(1)]_{q^{-1}}\bigr]_q=0.
\end{align}
\end{prop}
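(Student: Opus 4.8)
The plan is to verify the eight families \eqref{b34}--\eqref{b41} in a fixed order, bootstrapping from the degree-$0$ and index-$0$ data already assembled in this section. Throughout I would first invoke the Chevalley anti-involution $\iota$ to halve the work, checking only $\epsilon=+$ (equivalently $\epsilon=1$) and reading off the case $\epsilon=-$ by applying $\iota$. The single recurring tool is what one may call the ``$\mathrm{ad}\,\dot a_0(\pm1)$-trick'': since $\dot x_1^{\pm}(\pm1)$ is \emph{defined} in \eqref{a15} as a bracket of $\dot a_0(\pm1)$ against $\dot x_1^{\pm}(0)$, every index-$1$ relation of degree $\pm1$ is obtained by applying $\mathrm{ad}\,\dot a_0(\pm1)$ to a known index-$1$ relation of degree $0$ and expanding the nested brackets with \eqref{v1}--\eqref{v2}. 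The weight relations \eqref{b34} are immediate: apply $K_i$, $q^{d_1}$, $q^{d_2}$ to the defining bracket \eqref{a15} and use the weights of $\dot a_0(\pm1)$ recorded in \eqref{b1} together with those of $\dot x_1^{\pm}(0)$.

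Next I would dispose of the Cartan-type relations attached to the new copy of $U_q(\hat{sl_2})$. The eigenvalue relations \eqref{b38} follow by feeding the definition \eqref{a15} of $\dot x_1^{\pm}(-\epsilon)$ into $\mathrm{ad}\,\dot a_0(\epsilon)$, expanding by \eqref{v1}, and using that $[\dot a_0(1),\dot a_0(-1)]$ is central by \eqref{b3}; since the naive expansion relates the $\epsilon=1$ and $\epsilon=-1$ statements to each other, one base instance must be checked directly by expanding $\dot a_0(\pm1)$ into index-$0$ currents to break the circularity. The bracket \eqref{b35} and the imaginary-root relation \eqref{b40} then come from substituting the definitions \eqref{a16}--\eqref{a17} of $\dot a_1(\pm1)$ and collapsing the result with \eqref{b38} and \eqref{b3}. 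The relation \eqref{b37} is the index-$1$ analogue of the defining relation \eqref{f:comm3}; I would obtain it by applying $\mathrm{ad}\,\dot a_0$ to a degree-$0$ relation among index-$1$ generators exactly as \eqref{b17} was produced for index $0$. At this stage $\{\dot x_1^{\pm}(0),\dot x_1^{\pm}(\pm1),\dot a_1(\pm1)\}$ satisfies the same $\hat{sl_2}$ relations as the index-$0$ family $U_q(\hat{sl_2})_0$.

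The mixed relations \eqref{b36} and \eqref{b39} require crossing indices $0$ and $1$. Relation \eqref{b39}, $[\dot x_i^{-\epsilon}(0),\dot x_1^{\epsilon}(-\epsilon)]=0$ for $i\neq1$, follows by writing $\dot x_1^{\epsilon}(-\epsilon)$ via \eqref{a15}, applying the Jacobi expansion \eqref{v1}, and killing the two resulting terms with \eqref{f:comm4} and the mixed vanishing for distinct indices. For the Drinfeld relation \eqref{b36} I would start from the degree-$0$ relation between $\dot x_0^{\pm}(0)$ and $\dot x_1^{\pm}(0)$, apply $\mathrm{ad}\,\dot a_0(1)$, and substitute \eqref{a3} for $\dot x_0^{\pm}(1)$ and \eqref{a15} for $\dot x_1^{\pm}(1)$; the asymmetric factor $p$ in \eqref{b36} is precisely the $p^{\mp\epsilon}$ carried by \eqref{a15}, while the $q^{\mp1}$-bracket emerges from the $(\alpha_0,\alpha_1)=-1$ weightings in \eqref{v1}--\eqref{v2}.

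The genuine difficulty is the high-degree Serre relation \eqref{b41}, the toroidal obstruction flagged in the introduction. My approach is to regard \eqref{b41} as the image of the finite Serre relation \eqref{f:comm5} for the pair $(0,1)$, written in bracket form $[\dot x_0^{\pm}(0),[\dot x_0^{\pm}(0),\dot x_1^{\pm}(0)]_{q^{-1}}]_q=0$, under the operator $\mathrm{ad}\,\dot a_0(1)$ acting on the innermost slot $\dot x_1^{\pm}(0)\mapsto\dot x_1^{\pm}(1)$ via \eqref{a15}. Expanding $\mathrm{ad}\,\dot a_0(1)$ of the whole triple bracket with \eqref{v1}--\eqref{v2} yields, besides the wanted term proportional to \eqref{b41}, several spurious terms in which $\dot a_0(1)$ lands on one of the two copies of $\dot x_0^{\pm}(0)$, converting it into $\dot x_0^{\pm}(1)$. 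The main obstacle is to show that these spurious terms cancel: here one must use \eqref{b36} to rewrite the $q^{\mp1}$-brackets of $\dot x_0^{\pm}(1)$ with $\dot x_1^{\pm}(0)$, the relation \eqref{b17} to commute the two index-$0$ currents, and careful bookkeeping of the $q$-powers thrown off by \eqref{v1}--\eqref{v2}, since $\mathrm{ad}\,\dot a_0(1)$ is \emph{not} a derivation for the $q$-deformed brackets. I expect essentially all of the real computational weight of the proposition to reside in this final cancellation.
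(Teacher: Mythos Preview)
Your overall strategy---reduce via $\iota$, then bootstrap everything from the defining bracket \eqref{a15} using the ``$\mathrm{ad}\,\dot a_0(\pm1)$-trick'' and the Jacobi identities \eqref{v1}--\eqref{v2}---is exactly what the paper does, and your identification of \eqref{b41} as the genuine obstacle is correct.  Two of your specific tool choices, however, diverge from the paper and deserve correction.

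For \eqref{b37} your plan (``apply $\mathrm{ad}\,\dot a_0$ to a degree-$0$ index-$1$ relation, as \eqref{b17} was produced'') is too vague: there is no nontrivial degree-$0$ analogue of \eqref{f:comm3} for index~$1$ to start from.  The paper instead substitutes \eqref{a15}, then unpacks $\dot a_0(-1)$ via \eqref{a2} into index-$0$ currents, and kills the result with \eqref{f:comm4} and the Serre relation \eqref{f:comm6}.  This is the route that actually closes.

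For \eqref{b41} your plan to cancel the spurious terms using \eqref{b36} is not what works.  The $q$-parameters do not match: in the minus case the spurious inner bracket is $[\dot x_0^-(1),\dot x_1^-(0)]_{q^{-1}}$, whereas \eqref{b36} controls $[\dot x_0^-(1),\dot x_1^-(0)]_{q}$; moreover invoking \eqref{b36} would reintroduce $\dot x_1^{\pm}(1)$, the very object \eqref{b41} is about.  The paper's computation avoids this entirely.  After substituting \eqref{a15} and expanding with \eqref{v1}--\eqref{v2}, one term is $[\dot a_0(1),\,[\dot x_0^-(0),[\dot x_0^-(0),\dot x_1^-(0)]_{q^{-1}}]_q]$, which vanishes by the degree-$0$ Serre relation \eqref{f:comm5}.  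The remaining two terms, after converting $[\dot x_0^-(0),\dot a_0(1)]$ to $[2]\gamma^{1/2}\dot x_0^-(1)$, assemble precisely into
\[
[\dot x_0^-(0),[\dot x_0^-(1),\dot x_1^-(0)]_{q^{-1}}]_q+[\dot x_0^-(1),[\dot x_0^-(0),\dot x_1^-(0)]_{q^{-1}}]_q,
\]
which is the symmetrized Serre relation \eqref{f:comm7} (in its $\epsilon=-$ incarnation, so that $-\epsilon=1$) and hence vanishes as a \emph{defining} relation of $\mathcal U_0$.  No appeal to \eqref{b36} or \eqref{b17} is needed, and the ``careful bookkeeping'' you anticipate is just two applications of \eqref{v1}--\eqref{v2}.
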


\begin{proof}\, Using the Chevalley anti-involution $\iota$, we only check for the case of $\epsilon=+$.  \eqref{b34} holds directly by definition. \eqref{b35} follows from \eqref{a17}.
%\begin{equation*}
%\begin{split}
%&[\,a_0(1), a_1(-1)\,]\\&=K_1\gamma^{-\frac{1}{2}}[\,a_0(1),\,[x_1^+(-1),
%x_1^-(0)],\,]\\
%&=K_1\gamma^{-\frac{1}{2}}\Bigl(\bigl[\,[\,a_0(1),\,x_1^+(-1)\,],
%x_1^-(0)\,\bigr]+\bigl[\,x_1^+(-1),\,[a_0(1),\,
%x_1^-(0)\,]\,\bigr]\Big)\\
%&=-pK_\gamma^{-\frac{1}{2}}\Bigl(\gamma^{-1/2}\bigl[\,x_1^+(0),\,
%x_1^-(0)\,\bigr]-\gamma^{1/2}\bigl[\,x_1^+(-1),\,x_1^-(1)\,\bigr]\Big)\\
%&=-p\frac{\gamma-\gamma^{-1}}{q-q^{-1}}.
%\end{split}
%\end{equation*}
\eqref{b36} can be easily checked as follows. %which can be checked directly.
\begin{equation*}
\begin{split}
&[\,x_1^-(1),\,x_0^-(0)\,]_q=\gamma^{-\frac{1}{2}}\Big[\,\bigl[\,a_0(1),\,x_1^-(0)\,\bigr],\,x_0^-(0)\,\Big]_q\\
&=K_0^{-1}\Big[\,\bigl[\,[x_0^+(0),\,x_0^-(1)],\,x_1^-(0)\,\bigr]_q,\,x_0^-(0)\,\Big]_{q^{-1}}\\
&=K_0^{-1}\Bigl(\Big[\bigl[x_0^+(0),[x_0^-(1),x_1^-(0)]_q\,\bigr]_1,x_0^-(0)\Big]_{q^{-1}}
{+}q\Big[\bigl[[x_0^+(0), x_1^-(0)],x_0^-(1)\,\bigr]_q, x_0^-(0)\Big]_{q^{-1}}\Bigr)\\
&=K_0^{-1}\Bigl(\Big[x_0^+(0),\bigl[[x_0^-(1),x_1^-(0)]_q, x_0^-(0)\bigr]_{q^{-1}}\Big]
{+}q^{-1}\Big[\bigl[[x_0^+(0), x_0^-(0)], [x_0^-(1), x_1^-(0)\bigr]_{q}\,\Big]_q\Bigr)\\
&=-[\,x_0^-(1),\,x_1^-(0)\,]_q,
\end{split}
\end{equation*}
which is compatible with the defining relation \eqref{e:Dr3}. %\eqref{e:comm3}.
\eqref{b37} follows from \eqref{a15} and \eqref{a2} by using \eqref{f:comm4} and \eqref{f:comm6}.
%\begin{equation*}
%\begin{split}
%&[\,x_1^+(-1),\,x_1^+(0)\,]_{q^{-2}}\\
%&=-pK_0\Big[\bigl[\,[x_0^+(-1), x_0^-(0)], x_1^+(0)\bigr]_{q}, x_1^+(0)\Big]_{q^{-1}}\\
%&=-pK_0\Bigl(\bigl[[x_0^+(-1), [x_0^-(0), x_1^+(0)]]_{q}, x_1^+(0)\bigr]_{q^{-1}}
%+\bigl[[x_0^+(-1),\,x_1^+(0)]_{q}, x_0^-(0)], x_1^+(0)\bigr]_{q^{-1}}\Bigr)\\
%&=-pK_0\Bigl(\bigl[[x_0^+(-1),\,x_1^+(0)]_{q}, [x_0^-(0), x_1^+(0)]\bigr]_{q^{-1}}
%{+}\bigl[[[x_0^+(-1), x_1^+(0)]_{q}, x_1^+(0)]_{q^{-1}}, x_0^-(0)\bigr]_{q^{-1}}\Bigr)\\
%&=0,
%\end{split}
%\end{equation*}
%where we have used  \eqref{f:comm4} and \eqref{f:comm6}.
\eqref{b38}-\eqref{b40} are easy, which can be verified similarly.
Relation \eqref{b41} is compatible with the defining relation \eqref{e:Dr11} for $a_{ij}=-1$. %\eqref{e:comm5}.
It follows from \eqref{a15} that
\begin{equation*}
\begin{split}
&\bigl[\,x_0^-(0),\,[x_0^-(0),\,x_1^-(1)]_{q^{-1}}\bigr]_q\\
%&=p^{-1}\gamma^{-\frac{1}{2}}\bigl[\,x_0^-(0),\,[x_0^-(0),\,[a_0(1),\,x_1^-(0)]\,]_{q^{-1}}\bigr]_q\\
&=\gamma^{-\frac{1}{2}}\Bigl(\bigl[\,x_0^-(0),\,[[x_0^-(0),\,a_0(1)],\,x_1^-(0)\,]_{q^{-1}}\bigr]_q
+\bigl[\,x_0^-(0),\,[a_0(1),\,[x_0^-(0),\,x_1^-(0)]_{q^{-1}}]_1\bigr]_q\Bigr)\\
&=[2]\bigl[\,x_0^-(0),\,[x_0^-(1)\,x_1^-(0)\,]_{q^{-1}}\bigr]_q
+\gamma^{-\frac{1}{2}}\bigl[\,[x_0^-(0),\,a_0(1)]_1,\,[x_0^-(0),\,x_1^-(0)]_{q^{-1}}\bigr]_q\\
&\hskip6.0cm+\gamma^{-\frac{1}{2}}\bigl[\,[a_0(1),\,[x_0^-(0),\,[x_0^-(0),\,x_1^-(0)]_{q^{-1}}]_q\bigr]_1\\
&=[2]\Bigl(\bigl[\,x_0^-(0),\,[x_0^-(1)\,x_1^-(0)\,]_{q^{-1}}\bigr]_q
+\bigl[\,x_0^-(1),\,[x_0^-(0),\,x_1^-(0)]_{q^{-1}}\bigr]_q\Bigr)=0,
\end{split}
\end{equation*}
where we have used the Serre relation \eqref{f:comm6}.

\end{proof}

Similarly we can construct the subalgebras $U_q(\hat{sl_2})_i$, $i=2, \ldots, n$,
that is, we can obtain all other Drinfeld generators $x_i^{\pm}(k), a_i(r)$
and verify the interrelations among them.

On the other hand, as the Dynkin diagram of affine Lie algebra $\hat{\mathfrak{sl}_{n+1}}$ is a cycle, for $\epsilon=\pm1$ or $\pm$, we define that
\begin{gather}
\dot{y}_n^{\pm}(\epsilon)=\pm \gamma^{\pm\frac{1}{2}}\,\bigl[\,a_0(\epsilon),\,x_n^{\pm}(0)\,\bigr],\\
\dot{b}_n(1)=\gamma^{\frac{1}{2}}K_1^{-1}\,\bigl[\,x_{n}^+(0),\,\dot{y}_n^{-}(1)\,\bigr],\\
\dot{b}_n(-1)=\gamma^{-\frac{1}{2}}K_1\,\bigl[\,\dot{y}_{n}^+(-1),\,x_n^{-}(0)\,\bigr].
\end{gather}

It is easy to see that $x_n^{-\epsilon}(\epsilon)-\dot{y}_n^{-\epsilon}(\epsilon) \in Ker \pi$ for  $\epsilon=\pm1$ or $\pm$.
Therefore
by induction on degree we have proved Theorem \ref{newpre}.

%\begin{theo}\label{T:newgen}\, As associative algebras, ${\bar{\mathcal U}}_0\simeq U_q(sl_{n+1},tor)$.
%\end{theo}

\begin{remark}\, The quantum toroidal algebra contains two subalgebras $\mathcal{A}_1$ and $\mathcal{A}_2$ isomorphic
to the quantum affine algebra $U_q(\hat{sl}_n)$. These two subalgebras
are generated by Serre generators and partial Drinfeld generators respectively. %similar type of generators.
 $$
\mathcal{A}_1:=\left.\left\langle\, x_i^{\pm}(0), \,K_i^{\pm1},\,q^{d_1},\,
\gamma^{\pm\frac{1}2}\; \right| i\in I\;\right\rangle,$$
$$
\mathcal{A}_2:=\left.\left\langle\, x_i^{\pm}(0),\,  x_0^+(-1),\,x_0^-(1), \,K_i^{\pm1},\,q^{d_2},\,
\gamma^{\pm\frac{1}2}\; \right| i\in I/\{n\}\;\right\rangle.$$
It is easy to see that the subalgebra $\mathcal{A}_1$ is isomorphic to the Drinfeld-Jimbo realization of the quantum affine algebra $U_q(\hat{sl}_{n+1})$, and $\mathcal{A}_2$ is isomorphic to its Drinfeld realization.
\end{remark}

Denote by $\mathcal{U}_1$ the subalgebra of
$U_q(sl_{n+1},tor)$ generated by
$ x_j^{\pm}(0), \,x_1^+(1),\, x_1^-(-1)$,  $x_0^+(-1)$, $x_0^-(1)$, $K_i^{\pm1}$ ($j\in I/\{0\},\, i\in I$), $q^{d_1},\,q^{d_2}$
 and $\gamma^{\pm\frac{1}2}$ satisfying the relations \eqref{e:comm0}-\eqref{e:comm0c} and \eqref{e:Dr3}-\eqref{e:Dr11}, that is,
$$
{\mathcal U}_1:=\left.\left\langle\, x_j^{\pm}(0), \,x_1^+(1),\, x_1^-(-1),\, x_0^+(-1),\,x_0^-(1), \,K_i^{\pm1},\,q^{d_1},\,q^{d_2},\,
\gamma^{\pm\frac{1}2}\; \right|i\in I, j\in I/\{0\}\;\right\rangle.
$$
It then can be shown that ${\mathcal U}_1$ and ${\mathcal U}_0$ are isomorphic.

%\begin{prop}\, ${\mathcal U}_1\cong {\mathcal U}_0$
%\end{prop}
%\begin{proof}\, By Proposition 2.1, it suffices to show that the quantum affine algebra $\mathcal{A}_1$ is
%generated by $x_i^{\pm}(0), \,x_1^+(1),\, x_1^-(-1),\, K_i^{\pm1},\,
%\gamma^{\pm\frac{1}2}$  for $i\in I/\{0\}$, which follows from the result of \cite{JZ3}.
%\end{proof}

\section{Hopf algebra structure of ${\mathcal U}_0$}
In this section we show that there is a similar Hopf algebra structure on ${\mathcal U}_0$ for $n=1, 2$ extending from its vertical and horizontal quantum affine subalgebras, and conjecture that ${\mathcal U}_0$ for general $n$ is also a Hopf algebra under our formulas.

First we define the actions of $\Delta$ on $x_0^-(1)$ and $x_0^+(-1)$ for the case of $n=1$ as follows:
\begin{gather*}
\begin{split}
&\Delta(x_0^+(-1))=x_0^+(-1)\ot 1+\gamma K_0^{-1}\ot x_0^+(-1)+(q-q^{-1})\gamma K_0^{-1}x_1^-(0)\ot [x_0^+(-1),\,x_1^+(0)]_q,\\
&\Delta(x_0^-(1))=x_0^-(1)\ot\gamma^{-1}K_0+1\ot x_0^-(1)-(q-q^{-1})[x_{1}^-(0),\,x_0^-(1)]_{q^{-1}}\ot x_1^+(0)\gamma^{-1} K_0.
\end{split}
\end{gather*}

The actions of $\Delta$ on $x_0^-(1)$ and $x_0^+(-1)$ for the case of $n=2$  is given as follows:
\begin{gather*}
\begin{split}
&\Delta(x_0^+(-1))=x_0^+(-1)\ot 1+\gamma K_0^{-1}\ot x_0^+(-1)+(q-q^{-1})\gamma K_0^{-1}x_1^-(0)\ot [x_0^+(-1),\,x_1^+(0)]_q\\
&\hskip1.9cm+(q-q^{-1})\gamma K_0^{-1}[x_2^-(0), x_1^-(0)]_{q^{-1}}\ot  [[x_0^+(-1),\,x_1^+(0)]_q,x_2^+(0)]_{q},\\
&\Delta(x_0^-(1))=x_0^-(1)\ot\gamma^{-1}K_0+1\ot x_0^-(1)-(q-q^{-1})[x_{1}^-(0),\,x_0^-(1)]_{q^{-1}}\ot x_1^+(0)\gamma^{-1}K_0 \\
&\hskip1.6cm -(q-q^{-1})[x_2^-(0),[x_{1}^-(0),\,x_0^-(1)]_{q^{-1}}]_{q^{-1}}\ot [x_1^+(0),\, x_{2}^+(0)]_{q}\gamma^{-1}K_0.
\end{split}
\end{gather*}

\begin{defi}\,Under the above action of $\Delta$,  we
now define a comultiplication map $\Delta: {\mathcal U}_0\longrightarrow {\mathcal U}_0\otimes
{\mathcal U}_0$  for $n=1$ and $n=2$ on the generators as follows.
\begin{align}\label{e:commult1}
&\Delta(K_i)=K_i\ot K_i, \qquad \Delta(\gamma^{\pm\frac{1}2})=\gamma^{\pm\frac{1}2}\otimes
\gamma^{\pm\frac{1}2},\qquad
\Delta(q^{d_1})=q^{d_1}\ot q^{d_1}, \\ \label{e:commult2}
%\Delta(q^{d_2})=q^{d_2}\ot q^{d_2}, \\
&\Delta(x_i^+(0))=x_i^+(0)\ot 1+K_i\ot x_i^+(0), \qquad
\Delta(x_i^-(0))=x_i^-(0)\ot K_i^{-1}+1\ot
x_i^-(0).
\end{align}
\end{defi}

\begin{prop}\label{p:5.4} \,Under the above map $\Delta$, the algebra ${\mathcal U}_0$\, for $n=1$ or $n=2$ becomes a bialgebra.
%That is, $\Delta$ defines a morphism of coalgebra from $\mathcal{U}_0$  into $\mathcal{U}_0\otimes\mathcal{U}_0$.
\end{prop}

\begin{proof}\,  The proof will be divided into  three cases $n=1$ and $n=2$  of ${\mathcal U}_0$.

(I) For the case of $U_q(sl_{2},tor)$, %by the construction
%$$\Delta(x_0^-(0))=x_0^-(0)\ot K_0^{-1}+1\ot x_0^-(0),$$
%and
%$$\Delta(x_0^-(1))=x_0^-(1)\ot\gamma^{-1}K_0+1\ot x_0^-(1)-(q-q^{-1})[x_{1}^-(0),\,x_0^-(1)]_{q^{-1}}\ot x_1^+(0)\gamma^{-1} K_0$$
we focus on checking that $ [\Delta (x_0^-(1)),\,\Delta(x_0^-(0))]_{q^{-2}}=0.$
\begin{eqnarray*}
&&[\Delta(x_0^-(1)),\,\Delta(x_0^-(0))\,]_{q^{-2}}\\
&=&[\,x_0^-(1),\,x_0^-(0)\,]_{q^{-2}}\ot \gamma^{-1}-(q-q^{-1})\bigl[[\,x_1^-(0),\,x_0^-(1)\,]_{q^{-1}},\,x_0^-(0)\bigr]_{q^{-1}}\ot x_1^+(0)\gamma^{-1}\\
&&+1\ot[\,x_0^-(1),\,x_0^-(0)\,]_{q^{-2}} -(q-q^{-1})q^{-2}[x_0^-(0),\,x_0^-(1)]_{q^{-1}}\ot [x_1^+(0),\,x_0^-(0)]\gamma^{-1}K_0=0,
\end{eqnarray*}
where the first and third item are killed for \eqref{f:comm3}, and the second one is zero for \eqref{f:comm5}, the last item vanishes for
\eqref{f:comm4}.

Next, we further check that the action of $\Delta$ keeps the relation \eqref{f:comm4}. %By the construction,
%
%
%$$\Delta(x_1^+(0))=x_1^+(0)\ot 1+K_1\ot x_1^+(0),$$ one gets that,
%
\begin{eqnarray*}
&&[\,\Delta(x_1^+(0)),\,\Delta(x_0^-(1))\,]\\
&=&[\,x_1^+(0),\,x_0^-(1)\,]\ot \gamma^{-1} K_0+K_1\ot[\,x_1^+(0),\,x_0^-(1)\,]+(q-q^{-1})x_0^-(1)K_1\ot x_1^+(0)\gamma^{-1}K_0\\
&&-(q-q^{-1})x_0^-(1)K_1\ot x_1^+(0)\gamma^{-1}K_0=0.
\end{eqnarray*}

On the other hand, it is easy to see that
\begin{eqnarray*}
&&[\,\Delta(x_0^+(-1)),\,\Delta(x_0^-(1))\,]\\
&=&[\,x_0^+(-1),\,x_0^-(1)\,]\ot \gamma^{-1}K_0+\gamma K_0^{-1}\ot[\,x_0^+(-1),\,x_0^-(1)\,]\\
&&-(q-q^{-1}) \big[x_1^-(0),\,[x_0^+(-1),\,x_0^-(1)]\big]_{q^{-1}}\ot x_0^+(-1)\gamma^{-1}K_0\\
&&+(q-q^{-1})\gamma K_0^{-1}x_1^-(0)\ot \big[[x_0^+(-1),\,x_0^-(1)],\,x_1^+(0)\big]_q\\
&&-(q-q^{-1}) \big[\gamma K_0^{-1}\ot x_0^+(-1),\,[x_1^-(0),\,x_0^-(1)]_{q^{-1}}\ot x_1^+(0)\gamma^{-1}K_0\big]\\
&&+(q-q^{-1})\big[\gamma K_0^{-1}x_1^-(0)\ot [x_0^+(-1),\,x_1^+(0)]_q,\, x_1^-(0)\ot\gamma^{-1}K_0\big]\\
&&-(q-q^{-1})^2 \big[\gamma K_0^{-1} x_1^-(0)\ot [x_0^+(-1),\,x_1^+(0)]_q,\, [x_1^-(0),\, x_{0}^-(1)]_{q^{-1}}\ot x_1^+(0)\gamma^{-1}K_0\big].
\end{eqnarray*}
By direct computation using \eqref{e:Dr8}%\eqref{e:comm4}
and \eqref{e:Dr3}, %\eqref{e:comm1},
the third and forth terms cancel each other, and the fifth term vanishes.
 As for the last term, we calculate by \eqref{e:Dr3} %\eqref{e:comm1}
 and the Serre relations as follows.
\begin{align*}
&-(q-q^{-1})^2 \big[\gamma K_0^{-1} x_1^-(0)\ot [x_0^+(-1),\,x_1^+(0)]_q,\, [x_1^-(0),\, x_{0}^-(1)]_{q^{-1}}\ot x_1^+(0)\gamma^{-1}K_0\big]\\
&=-(q-q^{-1})^2 \gamma K_0^{-1}\underbrace{\big[ x_1^-(0),\,[x_1^-(0),\, x_{0}^-(1)]_{q^{-1}}\big]_q}\ot [x_0^+(-1),\,x_1^+(0)]_q \cdot x_1^+(0)\gamma^{-1}K_0\\
&-(q-q^{-1})^2 q \gamma K_0^{-1} x_1^-(0)\cdot[x_1^-(0),\, x_{0}^-(1)]_{q^{-1}}\ot \underbrace{\big[[x_0^+(-1),\,x_1^+(0)]_q,\, x_1^+(0)\big]_{q^{-1}}}\gamma^{-1}K_0=0.
\end{align*}

Therefore, one has that $[\Delta(x_0^+(-1)),\,\Delta(x_0^-(1))\,]
%&=[\,x_0^+(-1),\,x_0^-(1)\,]\ot \gamma^{-1}K_0+\gamma K_0^{-1}\ot[\,x_0^+(-1),\,x_0^-(1)\,]\\
=\frac{\Delta(\gamma^{-1} K_0)-\Delta(\gamma K_0^{-1})}{q-q^{-1}}.$

Finally, we would like to check  Serre relation
$[\,\Delta(x_1^-(0)),\,[\Delta(x_1^-(0)),\,\Delta(x_0^-(1))]_q\,]_{q^{-1}}=0$.

In fact, it follows from the construction of $\Delta(x_1^-(0))$ and $\Delta(x_0^-(1))$ that
\begin{align*}
&[\,\Delta(x_1^-(0)),\,[\Delta(x_1^-(0)),\,\Delta(x_0^-(1))]_q^{-1}\,]_{q}\\
%&=[\,\Delta(x_1^-(0)),\,1\ot [\,x_1^-(0),\,x_0^-(1)\,]_{q^{-1}}+[\,x_1^-(0),\,x_0^-(1)\,]_{q^{-1}}\ot \gamma^{-1}K_0K_1]\\
&=[x_1^-(0),\,[\,x_1^-(0),\,x_0^-(1)\,]_{q^{-1}}]_q\ot\gamma^{-1}K_0+1\ot [x_1^-(0),\,[\,x_1^-(0),\,x_0^-(1)\,]_{q^{-1}}]_q=0.
\end{align*}

(II) For the case of $n=2$, %the action of comultiplication $\Delta$ on the generators $x_0^-(1)$ and $x_0^+(-1)$ can be defined as follows:
similarly it suffices to check the relations involving $x_0^-(1)$ and $x_0^+(-1)$.
More precisely, we first verify that $[\,\Delta(x_0^-(1)),\,\Delta(x_0^-(0))\,]_{q^{-2}}=0$.

Note that $\Delta(x_0^-(0))=x_0^-(0)\ot K_0^{-1}+1\ot x_0^-(0)$ and some items are killed by \eqref{f:comm4} directly:
\begin{align*}
&[\,\Delta(x_0^-(1)),\,\Delta(x_0^-(0))\,]_{q^{-2}}\\
&=-(q-q^{-1})\bigl[[x_{1}^-(0),\,x_0^-(1)]_{q^{-1}},\,x_0^-(0)\bigl]_{q^{-1}}\ot x_1^+(0)\gamma^{-1}\\
&-(q-q^{-1})\bigl[[x_2^-(0),\, x_{1}^-(0),\,x_0^-(1)]_{(q^{-1},q^{-2})},\,x_0^-(0)\bigl]\ot [x_1^+(0),\,x_2^+(0)]_{q}\gamma^{-1} \\
&+(q-q^{-1})\bigl[[x_{1}^-(0),\,x_2^-(0),\, x_{1}^-(0),\,x_0^-(1)]_{(q^{-1},q^{-2},1)},\,x_0^-(0)\bigl]_{q}\ot [x_1^+(0),\,x_2^+(0),\,x_{1}^+(0)]_{(q, q^{-1})}\gamma^{-1} \\
&-(q-q^{-1})\frac{q}{2}\bigl[[x_2^-(0),\,x_{1}^-(0),\,x_2^-(0),\, x_{1}^-(0),\,x_0^-(1)]_{(q^{-1},q^{-2},1, q^{-1})},\,x_0^-(0)\bigl]_{q^2}\\
&\hskip4.2cm\ot [x_1^+(0),\,x_2^+(0),\,x_{1}^+(0),\,x_2^+(0)]_{(q,q^{-1},q^{-2})}\gamma^{-1}\\
&-(q-q^{-1})\bigl[[x_{2}^-(0),\,x_0^-(1)]_{q^{-1}},\,x_0^-(0)\bigl]_{q^{-1}}\ot x_2^+(0)\gamma^{-1}\\
&-(q-q^{-1})\bigl[[x_1^-(0),\, x_{2}^-(0),\,x_0^-(1)]_{(q^{-1},q^{-2})},\,x_0^-(0)\bigl]\ot [x_2^+(0),\,x_1^+(0)]_{q}\gamma^{-1}\\
&+(q-q^{-1})\bigl[[x_{2}^-(0),\,x_1^-(0),\, x_{2}^-(0),\,x_0^-(1)]_{(q^{-1},q^{-2},1)},\,x_0^-(0)\bigl]_{q}\ot [x_2^+(0),\,x_1^+(0),\,x_{2}^+(0)]_{(q, q^{-1})}\gamma^{-1} \\
&-(q-q^{-1})\frac{q}{2}\bigl[[x_1^-(0),\,x_{2}^-(0),\,x_1^-(0),\, x_{2}^-(0),\,x_0^-(1)]_{(q^{-1},q^{-2},1, q^{-1})},\,x_0^-(0)\bigl]_{q^2}\\
&\hskip4.2cm\ot [x_2^+(0),\,x_1^+(0),\,x_{2}^+(0),\,x_1^+(0)]_{(q,q^{-1},q^{-2})}\gamma^{-1}=0.
\end{align*}
where the terms cancel each other by the Serre relations.

Now we turn to $[\,\Delta(x_1^+(0)),\,\Delta(x_0^-(1))\,]=0$.
Note that $\Delta(x_1^+(0))=x_1^+(0)\ot 1+K_1\ot x_1^+(0)$,
the left-hand side can be computed by the Serre relations and \eqref{e:commult3}-\eqref{e:commult4}.
\begin{align*}
&[\,\Delta(x_1^+(0)),\,\Delta(x_0^-(1))\,]\\
&=-(q-q^{-1})[x_{2}^-(0),\,x_0^-(1)]_{q^{-1}}K_1\ot [x_1^+(0),\,x_2^+(0)]_q\gamma^{-1} K_0\\
&-(q-q^{-1})\frac{q}{2}\, [x_2^-(0), x_1^-(0), x_{2}^-(0), x_0^-(1)]_{(q^{-1}, q^{-2}, 1)}K_1\\
&\hskip4.2cm\ot [x_1^+(0), x_2^+(0), x_1^+(0), x_2^+(0)\,]_{(q, q^{-1}, q^{-2})}\gamma^{-1} K_0\\
&-(q^2-q^{-2})[x_{2}^-(0),\,x_0^-(1)]_{q^{-1}}K_1\ot [x_2^+(0),\,x_1^+(0)]_q\gamma^{-1} K_0\\
&-(q-q^{-1})\frac{q}{2}\, [x_2^-(0), x_1^-(0), x_{2}^-(0), x_0^-(1)]_{(q^{-1}, q^{-2}, 1)}K_1\\
&\hskip4.2cm\ot [x_2^+(0), x_1^+(0), x_2^+(0), x_1^+(0)\,]_{(q, q^{-1}, q^{-2})}\gamma^{-1} K_0\\
&-(q-q^{-1})q^2[x_{2}^-(0),\,x_0^-(1)]_{q^{-1}}K_1\ot [x_1^+(0),\,x_2^+(0)]_{q^{-3}}\gamma^{-1} K_0\\
&-(q-q^{-1})q\, [x_2^-(0), x_1^-(0), x_{2}^-(0), x_0^-(1)]_{(q^{-1}, q^{-2}, 1)}K_1\\
&\hskip4.2cm\ot [x_1^+(0), x_2^+(0), x_1^+(0), x_2^+(0)\,]_{(q, q^{-1}, q^{-2})}\gamma^{-1} K_0=0,
\end{align*}
where we have used the following two relations (which are based on the Serre relations):
\begin{align}
&[x_1^+(0), x_1^+(0), x_2^+(0), x_1^+(0), x_2^+(0)\,]_{(q, q^{-1},q^{-2}, 1)}=0,\\
&[x_1^+(0), x_2^+(0), x_1^+(0), x_2^+(0), x_1^+(0)\,]_{(q, q^{-1},q^{-2}, 1)}=0.
\end{align}
Similarly it can be checked that $[\,\Delta(x_2^+(0)),\,\Delta(x_0^-(1))\,]=0.$

Next we need to verify that
\begin{equation*}
[\,\Delta(x_0^+(-1)),\,\Delta(x_0^-(1))\,]=\frac{\Delta(\gamma^{-1} K_0)-\Delta(\gamma K_0^{-1})}{q-q^{-1}}.
\end{equation*}

It goes as follows by the actions of $\delta$ on the generators $x_0^+(-1)$ and $x_0^-(1)$.
\begin{eqnarray*}
&&[\,\Delta(x_0^+(-1)),\,\Delta(x_0^-(1))\,]\\
&=&\Big[\,\Delta(x_0^+(-1)),\,x_0^-(1)\ot\gamma^{-1}K_0+1\ot x_0^-(1)-(q-q^{-1})[x_{1}^-(0),\,x_0^-(1)]_{q^{-1}}\ot x_1^+(0)\gamma^{-1}K_0 \\
&&\hskip3.6cm -(q-q^{-1})[x_2^-(0),[x_{1}^-(0),\,x_0^-(1)]_{q^{-1}}]_{q^{-1}}\ot [x_1^+(0),\, x_{2}^+(0)]_{q}\gamma^{-1}K_0\Big]\\
&=&[\,x_0^+(-1),\,x_0^-(1)\,]\ot \gamma^{-1}K_0+\gamma K_0^{-1}\ot[\,x_0^+(-1),\,x_0^-(1)\,]\\
&&-(q-q^{-1})\big[[x_0^+(-1),\,[x_{1}^-(0),\,x_0^-(1)]_{q^{-1}}]\ot x_1^+(0)\gamma^{-1}K_0\\
&&-(q-q^{-1})\big[x_0^+(-1),\,[x_2^-(0),\,[x_{1}^-(0),\,x_0^-(1)]_{q^{-1}}]_{q^{-1}}\big]\ot [x_1^+(0),\,x_2^+(0)]_{q}\gamma^{-1}K_0\\
&&+(q-q^{-1})\gamma K_0^{-1}x_1^-(0)\ot\big[[x_0^+(-1),\,x_{1}^+(0)]_{q},\,x_0^-(1)\big] \\
&&+(q-q^{-1})\gamma K_0^{-1}[x_2^-(0),\,x_1^-(0)]_{q^{-1}}\ot\big[[[x_0^+(-1),\,x_{1}^+(0)]_{q},\,x_2^+(0)]_{q},\,x_0^-(1)\big].
\end{eqnarray*}

In fact, by direct computation using \eqref{e:Dr8} %\eqref{e:comm4}
and \eqref{e:Dr3} %\eqref{e:comm1}
similar to Case (I), the last terms are killed due to the Serre relations.
We take one term as an example,
\begin{eqnarray*}
&&\gamma K_0^{-1}\Big(x_1^-(0)\cdot[x_1^-(0),\,x_0^-(1)]_{q^{-1}}\ot [x_0^+(-1),\,x_1^+(0)]_{q}\cdot x_1^+(0)\\
&&-[x_1^-(0),\,x_0^-(1)]_{q^{-1}}\cdot x_1^-(0)\ot x_1^+(0)\cdot[x_0^+(-1),\,x_1^+(0)]_{q}\Big)\gamma^{-1}K_0\\
&=&\gamma K_0^{-1}\Big(\underbrace{[x_1^-(0),\,x_1^-(0),\,x_0^-(1)]_{(q^{-1}, q)}}\ot [x_0^+(-1),\,x_1^+(0)]_{q}\cdot x_1^+(0)\\
&&-[x_1^-(0),\,x_0^-(1)]_{q^{-1}}\cdot x_1^-(0)\ot \underbrace{[x_1^+(0),\,x_0^+(-1),\,x_1^+(0)]_{(q,\,q^{-1})}}\Big)\gamma^{-1}K_0=0.
\end{eqnarray*}
Hence we have proved that $
[\,\Delta(x_0^+(-1)),\,\Delta(x_0^-(1))\,]
%&=&[\,x_0^+(-1),\,x_0^-(1)\,]\ot \gamma^{-1}K_0+\gamma K_0^{-1}\ot[\,x_0^+(-1),\,x_0^-(1)\,]\\
=\frac{\Delta(\gamma^{-1} K_0)-\Delta(\gamma K_0^{-1})}{q-q^{-1}}$.
\end{proof}
For the quantum toroidal algebra $U_q(sl_2, tor)$, we define that
\begin{eqnarray*}
&&S(x_0^+(-1))=-\gamma^{-1} K_0x_0^+(-1)+(q-q^{-1})q^2\gamma^{-1}K_0K_1x_1^{-}(0)[x_0^+(-1),\,x_1^+(0)]_q,\\
&&S(x_0^-(1))=-x_0^-(1)\gamma K_0^{-1}-(q-q^{-1})q^{-2}[x_{1}^-(0),\,x_0^-(1)]_{q^{-1}}x_1^+(0)\gamma K_0^{-1}K_1^{-1}.
\end{eqnarray*}

For the quantum toroidal algebra $U_q(sl_3, tor)$, we get that
\begin{eqnarray*}
&&S(x_0^+(-1))=-\gamma^{-1} K_0x_0^+(-1)+(q-q^{-1})q^2\gamma^{-1}K_0K_1x_1^{-}(0)[x_0^+(-1),\,x_1^+(0)]_q,\\
&&\hskip3.1cm-(q-q^{-1})q^3\gamma^{-1}K_0K_1K_2[x_1^{-}(0),x_2^-(0)]_{q^{-1}}[[x_0^+(-1),\,x_1^+(0)]_q,\,x_2^+(0)]_{q},\\
&&S(x_0^-(1))=-x_0^-(1)\gamma K_0^{-1}-(q-q^{-1})q^{-2}[x_{1}^-(0),\,x_0^-(1)]_{q^{-1}}x_1^+(0)\gamma K_0^{-1}K_1^{-1}\\
&&\hskip1.8cm-(q-q^{-1})q^{-3}[x_2^-(0),\,[x_{1}^-(0),\,x_0^-(1)]_{q^{-1}}]_{q^{-1}}[x_2^+(0),\,x_1^+(0)]_{q}\gamma K_0^{-1}K_1^{-1}K_2^{-1}.
\end{eqnarray*}

The following result is directly computed by the comultiplication and definition.

\begin{prop}
Under the action of the antipode $s$ on $x_0^+(-1)$ and $x_0^-(1)$, the algebra ${\mathcal U}_0$  for $n=1$ or $n=2$  is a Hopf algebra with
the comultiplication $\Delta$ given above, the counit $\vep$ and the antipode $S$
defined below. For $i\in I$.
\begin{eqnarray*}
&&\varepsilon(x_i^+(0))=\varepsilon(x_i^-(0))=\varepsilon(x_0^+(-1))=\varepsilon(x_0^-(1))=0,\\
&& \varepsilon(\gamma^{\pm\frac{1}2})
=\varepsilon(K_i)=\varepsilon(K_i^{-1})=\varepsilon(q^{d_1})=\varepsilon(q^{d_2})=1,\\
&&S(\gamma^{\pm\frac{1}2})=\gamma^{\mp\frac{1}2},\qquad S(K_i^{\pm1})=K_i^{\mp1}, \qquad S(q^{d_1})=q^{-d_1},\qquad S(q^{d_2})=q^{-d_2},\\
&&S(x_i^+(0))=-K_i^{-1}x_i^+(0),\qquad S(x_i^-(0))=-x_i^-(0)\,K_i.
\end{eqnarray*}
\end{prop}

For general case,
We begin with the notations. Denote $\Pi_0$ by the set of all real root vectors of toroidal Lie algebra
$(sl_{n+1},tor)$ generated by simple roots beginning with $\alpha_0$, or all real roots supported at
$\alpha_0$ with multiplicity one. Note that $\Pi_0$ is a finite set, in fact, if one bounds the coefficients
of any fixed simple root, then there are only finitely many such affine roots.
For any real root $\alpha_{\beta 0}\in \Pi_0$
suppose $\alpha_{\beta 0}$ obtained from the simple roots $\alpha_0, \alpha_{i_1}, \cdots, \alpha_{i_k}$, that is, $\alpha_{\beta 0}=\alpha_0+\alpha_{i_1}+\cdots+\alpha_{i_k}$, where $i_1,\cdots, i_k\in \{1, \cdots, n\}$.

\vskip 0.2in

%TeXCAD Options
%\grade{\on}
%\emlines{\off}
%\epic{\off}
%\beziermacro{\on}
%\reduce{\on}
%\snapping{\off}
%\quality{8.00}
%\graddiff{0.01}
%\snapasp{1}
%\zoom{4.0000}
\unitlength 1mm % = 2.85pt
\linethickness{0.4pt}
\ifx\plotpoint\undefined\newsavebox{\plotpoint}\fi % GNUPLOT compatibility
\begin{picture}(74.5,27.75)(0,0)
\put(6.0,11.5){$(sl_{n+1}, tor)$}
\put(26.25,10.25){\circle{1.8}}
\put(37.75,10){\circle{2.06}}
\put(61.25,9.75){\circle{1.5}}
\put(73.5,9.75){\circle{2}}
\put(49.25,23.25){\circle{1.5}}
%\emline(27,10.5)(36.5,10.5)
\put(27,10.5){\line(1,0){9.5}}
%\end
%\emline(38.75,10.25)(44.25,10.25)
\put(38.75,10.25){\line(1,0){5.5}}
%\end
%\emline(53.75,10.5)(60.25,10.5)
\put(53.75,10.5){\line(1,0){6.5}}
%\end
%\emline(62,10)(72,10)
\put(62,10){\line(1,0){10}}
%\end
%\dottedline(45.25,10.5)(51.75,10.5)
\multiput(45.18,10.43)(.9286,0){8}{{\rule{.4pt}{.4pt}}}
%\end
%\emline(26.25,11)(48.5,23.5)
\multiput(26.25,11)(.059973046,.033692722){371}{\line(1,0){.059973046}}
%\end
%\emline(73.25,10.5)(50.25,23.5)
\multiput(73.25,10.5)(-.059585492,.033678756){386}{\line(-1,0){.059585492}}
%\end
\put(25.75,4){1}
\put(37.75,4){2}
\put(61.5,3.75){n-1}
\put(73.5,3.25){n}
\put(49.5,27.75){0}
\end{picture}

We remark that the roots in $\Pi_0$ are in fact real roots of some affine Lie algebra in type $A$.
In fact, the real roots $\alpha_{\beta 0}\in \Pi_0$ can always be written in three possible ways:
 $$\alpha_{\beta 0}=\alpha_0+\alpha_1+\alpha_2\cdots+\alpha_{k_1}\in\Pi_1,$$
$$\alpha_{\beta 0}=\alpha_0+\alpha_n+\alpha_{n-1}\cdots+\alpha_{k_2}\in\Pi_2,$$
$$\alpha_{\beta 0}=\alpha_0+\alpha_1+\alpha_n+\cdots+\alpha_{k_3}\in\Pi_3,$$  where $k_i=2,\cdots, n-1$ for $i=1, 2, 3$.
They respectively correspond to three cases of positive roots passing through a fixed simple root. For example, in type $A_5$
any root passing through $\alpha_2$ are: $\alpha_2+\alpha_3+\cdots$, $\alpha_1+\alpha_2$, or $\alpha_1+\alpha_2+\alpha_3+\cdots$.

Therefore, $\Pi_0=\Pi_1\cup\Pi_2\cup\Pi_3$. Consider the real root system $\Pi_0$ by induction, except for the special
cases of $n=1$ and $n=2$, the real root system $\Pi_0$ of the cases of $n>2$ would degenerate to the case for $n=3$, because
$\Pi_i$ can be obtained by the same way for $i=1, 2, 3$. Therefore, the proof of Proposition \ref{p:5.4} below will be divided into three
cases $n=1$, $n=2$ and $n=3$.

Accordingly we define the quantum root vectors $x_{\alpha_{\beta 0}}^-(1)$, $x_{\alpha_{\beta 0}}^+(-1)$, $y_{\alpha_{\beta 0}}^-(1)$ and $y_{\alpha_{\beta 0}}^+(-1)$ associated to the root $\alpha_{\beta 0}$ as follows:
\begin{align*}
x_{\alpha_{\beta 0}}^-(1)&=[\,x_{i_{k}}^-(0),\, x_{i_{k-1}}^-(0),\,\cdots,\, x_{i_1}^-(0),\,x_0^-(1)\,]_{(q^{u_{i_1}},\,\cdots,\, q^{u_{i_k}})},\\
x_{\alpha_{\beta 0}}^+(-1)&=[\,x_0^+(-1),\,x_{i_1}^+(0),\, x_{i_2}^+(0),\,\cdots,\, x_{i_{k-1}}^+(0),\,x_{i_{k}}^+(0)\,]_{(q^{v_{i_{k-1}}},\,\cdots, q^{v_{i_1}},\, q^{u_{i_k}})},\\
y_{\alpha_{\beta 0}}^-(1)&=[\,x_{i_{k}}^-(0),\, x_{i_{k-1}}^-(0),\,\cdots,\, x_{i_1}^-(0),\,x_0^-(1)\,]_{\la q^{-v_{i_{k-1}}},\,\cdots,\, q^{-v_{i_1}},\,q^{-u_{i_k}}\ra},\\
y_{\alpha_{\beta 0}}^+(-1)&=[\,x_0^+(-1),\,x_{i_1}^+(0),\, x_{i_2}^+(0),\,\cdots,\, x_{i_{k-1}}^+(0),\,x_{i_{k}}^+(0)\,]_{\la q^{-u_{i_{1}}},\,\cdots,\, q^{-u_{i_k}}\ra},
\end{align*}
where for $j=1, \cdots, k$
\begin{align*}
u_{i_j}&=((\alpha_0,\,\alpha_{i_1})+(\alpha_0,\,\alpha_{i_2})\cdots+(\alpha_0,\,\alpha_{i_j})),\\ v_{i_j}&=-((\alpha_{i_k},\,\alpha_{i_{k-1}})+(\alpha_{i_k},\,\alpha_{i_{k-2}})\cdots+(\alpha_{i_k},\,\alpha_{i_{j}})).
\end{align*}
Define  $\xi_{\alpha_{\beta 0}}=\frac{1}{[-u_{i_k}]}$.

We define the diagram automorphism $\sigma$ of the algebra ${\mathcal U}_0$ as follows: $\sigma(i)=n+1-i$ for $1\leqslant i \leqslant n$. Note that  for any $\alpha_{\beta 0}\in \Pi_0$, and $\sigma(\alpha_{\beta 0})\in \Pi_0$.

\medskip

The following are easy consequences of \eqref{e:Dr8} %e:comm4}
and \eqref{e:comm0}.
\begin{lemm} With the above notations, one gets that
\begin{align}
&[x_0^-(1), x_{\alpha_{\beta 0}}^+(-1)]=[-u_{i_k}][\,x_{i_{1}}^+(0),\,\cdots, x_{i_{k-1}}^+(0),\, x_{i_k}^+(0)\,]_{(q^{v_{i_{k-1}}},\,\cdots,\, q^{v_{i_1}})}\gamma^{-1} K_0,\\
&[y_{\alpha_{\beta 0}}^-(1),\,x_0^+(-1)]=[u_{i_k}]\gamma K_0^{-1}[\,x_{i_{k}}^-(0),\,\cdots, x_{i_{2}}^-(0),\, x_{i_1}^-(0)\,]_{\la q^{-v_{i_{k-1}}},\,\cdots,\, q^{-v_{i_1}}\ra},\\
&[x_0^+(-1), x_{\alpha_{\beta 0}}^-(1)]
=q^{u_{i_k}-u_{i_1}}[-u_{i_1}]\gamma^{-1}K_0[\,x_{i_{k}}^-(0), \cdots, x_{i_1}^-(0)\,]_{(q^{u_{i_1}}, \cdots, q^{u_{i_{k-1}}})}, \\
&[y_{\alpha_{\beta 0}}^+(-1),\,x_0^-(1)]=q^{-u_{i_k}+u_{i_1}}[u_{i_1}][\,x_{i_{1}}^+(0),\,\cdots, x_{i_k}^+(0)\,]_{\la q^{-u_{i_1}},\,\cdots,\, q^{-u_{i_{k-1}}}\ra}\gamma^{-1} K_0.
\end{align}
\end{lemm}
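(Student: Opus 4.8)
The plan is to derive all four identities from the single cross relation \eqref{e:comm4}, whose Kronecker delta $\delta_{ij}$ is the decisive feature. Every root vector $x_{\alpha_{\beta 0}}^{\pm}$ and $y_{\alpha_{\beta 0}}^{\pm}$ is a nested $q$-bracket built from one factor carrying index $0$ (either $x_0^+(-1)$ or $x_0^-(1)$) together with factors $x_{i_j}^{\pm}(0)$ whose indices $i_j\in\{1,\dots,n\}$ are all nonzero; hence \eqref{e:comm4} forces $x_0^-(1)$ to commute with every $x_{i_j}^+(0)$ and $x_0^+(-1)$ to commute with every $x_{i_j}^-(0)$. Consequently each commutator in the lemma only "sees" the unique index-$0$ factor, and the whole computation reduces to the elementary bracket $[x_0^+(-1),x_0^-(1)]$ evaluated by \eqref{e:comm4}, followed by transporting the resulting Cartan element past the remaining simple root vectors using \eqref{e:comm0} and \eqref{e:comm00}.

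First I would treat the first identity, the "outermost" case. Writing $x_{\alpha_{\beta 0}}^+(-1)=[\,x_0^+(-1),\,Y\,]_{q^{u_{i_k}}}$ with $Y=[\,x_{i_1}^+(0),\dots,x_{i_k}^+(0)\,]_{(q^{v_{i_{k-1}}},\dots,q^{v_{i_1}})}$ (the outermost bracket uses the last parameter $q^{u_{i_k}}$), the fact that $x_0^-(1)$ commutes with the entire polynomial $Y$ lets the ordinary Leibniz rule collapse $[\,x_0^-(1),\,x_{\alpha_{\beta 0}}^+(-1)\,]$ to $[\,x_0^-(1),x_0^+(-1)\,]\,Y-q^{u_{i_k}}\,Y\,[\,x_0^-(1),x_0^+(-1)\,]$. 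I then substitute the Cartan value of $[\,x_0^-(1),x_0^+(-1)\,]$ from \eqref{e:comm4} and push it across $Y$ via \eqref{e:comm00}; since $Y$ has weight $\alpha_{i_1}+\cdots+\alpha_{i_k}$ with $(\alpha_0,\alpha_{i_1}+\cdots+\alpha_{i_k})=u_{i_k}$, one of the two Cartan terms cancels between the two summands while the other yields exactly the factor $[-u_{i_k}]$ multiplying $Y$ and the asserted Cartan monomial.

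Next I would handle the third identity, the "innermost" case, where $x_0^-(1)$ now occupies the deepest slot of $x_{\alpha_{\beta 0}}^-(1)$. Because $x_0^+(-1)$ commutes with each outer factor $x_{i_j}^-(0)$, the elementary identity $[\,A,[\,b,W\,]_v\,]=[\,b,[\,A,W\,]\,]_v$ (valid whenever $[A,b]=0$) lets me transport $[\,x_0^+(-1),\,\cdot\,]$ through every layer of the nest until it reaches $[\,x_0^+(-1),x_0^-(1)\,]$ at the center. Evaluating this by \eqref{e:comm4} and then commuting the resulting Cartan element back outward through the $x_{i_j}^-(0)$, using \eqref{e:comm0}--\eqref{e:comm00} at each step, should reassemble a nested bracket in the $x_{i_j}^-(0)$ alone and collect the scalar $q^{u_{i_k}-u_{i_1}}[-u_{i_1}]$ with the stated Cartan monomial; here the reorganization identities \eqref{v1}--\eqref{v2} are the natural device for re-expressing the intermediate nested brackets so that the internal parameters line up with $(q^{u_{i_1}},\dots,q^{u_{i_{k-1}}})$.

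Finally, the remaining two identities need no separate computation: applying the Chevalley anti-involution $\iota$ (which sends $x_0^-(1)\mapsto x_0^+(-1)$, $x_{i_j}^+(0)\mapsto x_{i_j}^-(0)$, $K_i\mapsto K_i^{-1}$, $\gamma\mapsto\gamma^{-1}$, $q\mapsto q^{-1}$ and reverses products) carries the round-bracket vectors $x_{\alpha_{\beta 0}}^{\pm}$ to the angle-bracket vectors $y_{\alpha_{\beta 0}}^{\mp}$ with inverted parameters, so that $\iota$ applied to the first (resp. third) identity yields the second (resp. fourth). I expect the main obstacle to be the $q$-power bookkeeping in the third identity: checking that, once the Cartan element has been commuted all the way out through the chain of $x_{i_j}^-(0)$, the accumulated weight factors $(\alpha_0,\alpha_{i_j})$ recombine precisely into the parameters $q^{u_{i_1}},\dots,q^{u_{i_{k-1}}}$ and the prefactor $q^{u_{i_k}-u_{i_1}}$, rather than into some other reshuffling. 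The outermost case and the anti-involution steps are, by contrast, immediate.
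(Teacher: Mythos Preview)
Your approach is correct and coincides with the paper's: the paper gives no detailed proof, merely stating that the four identities are ``easy consequences of \eqref{e:comm4} and \eqref{e:comm0}'', and your plan uses precisely those two relations in the way intended---exploiting the $\delta_{ij}$ in \eqref{e:comm4} to strip away the non-zero-index factors, evaluating the core bracket $[x_0^+(-1),x_0^-(1)]$, and then moving the resulting Cartan element through the remaining root vectors via \eqref{e:comm00}. Your use of the Chevalley anti-involution to deduce the second and fourth identities from the first and third is a natural shortcut fully in the spirit of the paper, which invokes $\iota$ repeatedly in Section~4 for exactly this purpose.
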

%\begin{proof} The lemma is verified directly by using relations \eqref{e:comm4} and \eqref{e:comm0a}. %$(3.3)$.
%\end{proof}

We
now define a comultiplication map ${\mathcal U}_0\longrightarrow {\mathcal U}_0\otimes
{\mathcal U}_0$ on the generators as follows.
\begin{align}\label{e:commult1}
&\Delta(K_i)=K_i\ot K_i, \qquad \Delta(\gamma^{\pm\frac{1}2})=\gamma^{\pm\frac{1}2}\otimes
\gamma^{\pm\frac{1}2},\qquad
\Delta(q^{d_1})=q^{d_1}\ot q^{d_1}, \\ \label{e:commult2}
%\Delta(q^{d_2})=q^{d_2}\ot q^{d_2}, \\
&\Delta(x_i^+(0))=x_i^+(0)\ot 1+K_i\ot x_i^+(0), \qquad
\Delta(x_i^-(0))=x_i^-(0)\ot K_i^{-1}+1\ot
x_i^-(0), \\ \notag
&\Delta(x_0^-(1))=x_0^-(1)\ot\gamma^{-1}K_0+1\ot x_0^-(1)\\ \label{e:commult3}
&\hskip1.9cm -(q-q^{-1})\sum_{\alpha_{\beta 0}\in \Pi_0}\xi_{\alpha_{\beta 0}}x_{\alpha_{\beta 0}}^-(1)\ot [x_0^-(1),\, x_{\alpha_{\beta 0}}^+(-1)],\\ \notag
&\Delta(x_0^+(-1))=x_0^+(-1)\ot 1+\gamma K_0^{-1}\ot x_0^+(-1)\\ \label{e:commult4}
&\hskip1.9cm-(q-q^{-1})\sum_{\alpha_{\beta 0}\in \Pi_0}\xi_{\alpha_{\beta 0}}[y_{\alpha_{\beta 0}}^-(1),\,x_0^+(-1)]\ot y_{\alpha_{\beta 0}}^+(-1).
\end{align}
\begin{remark}\, Note that $\Pi_0$ is finite. The coproduct formulas can also be formulated as a sum over different paths
along the Dynkin diagram of type
$A_n^{(1)}$.
\end{remark}

%Now we have the following conjecture.

\noindent{\bf Conjuecture:}
The algebra ${\mathcal U}_0$  for $n\geq 1$
is a Hopf algebra
under the comultiplication formulas.

The conjecture holds when $n=1, 2$ due to Prop. \ref{p:5.4}.

\section{Quantum double algebra structure of ${\mathcal U}_0$}
We give the Drinfeld double structure for the algebra ${\mathcal U}_0$ in this section.

\begin{defi}
A bilinear form $\langle\ \, ,\ \rangle:$ ${\mathfrak{B}}\times
{\mathfrak{A}}\longrightarrow \mathbb{K}$ is called a skew-dual pairing
of two Hopf algebras ${\mathfrak{A}}$ and ${\mathfrak{B}}$, %(see \cite{KS}),
if it satisfies
\begin{gather*}
\langle b,\,
1_{{\mathfrak{A}}}\rangle=\varepsilon_{{\mathfrak{B}}}(b),\qquad\qquad
\langle 1_{{\mathfrak{B}}},\, a\rangle=\varepsilon_{{\mathfrak{A}}}(a),\\
\langle b,\, a_1a_2\rangle=\langle\Delta^{{\rm op}}_{{\mathfrak{B}}}(b),
a_1\otimes a_2\rangle,\qquad \langle b_1b_2, a\rangle=\langle
b_1\otimes b_2, \Delta_{{\mathfrak{A}}}(a)\rangle,
\end{gather*}
for all $a,\, a_1,\, a_2\in{{\mathfrak{A}}}$ and $b,\, b_1,\,
b_2\in{{\mathfrak{B}}}$, where $\varepsilon_{{\mathfrak{A}}}$,
$\varepsilon_{{\mathfrak{B}}}$ denote the counites of ${\mathfrak{A}}$,
${\mathfrak{B}}$, respectively, and $\Delta_{{\mathfrak{A}}}$,
$\Delta_{{\mathfrak{B}}}$ are the respective comultiplications.
\end{defi}

In \cite{G}, Gross\'e defined a weak Hopf pairing for the quantum affine algebras within
a much larger topological Hopf algebra. It was not clear if
the formulas there are closed on the quantum affine algebras. Our Hopf pairing is for the quantum toroidal algebra and closed on the
subalgebra of the quantum affine algebra.

A direct consequence of the definition is that
$$\langle S_{\mathfrak{B}}(f),a \rangle=
 \langle f,S_{\mathfrak{A}}(a) \rangle, \quad f \in {\mathfrak{B}},\quad a \in {\mathfrak{A}},$$
  where $S_{\mathfrak{B}}, S_{\mathfrak{A}}$ denote the
 antipodes of $\mathfrak{B}$ and $\mathfrak{A}$, respectively.

\begin{defi} \label{b:1} For any two skew-paired Hopf algebras ${\mathfrak{A}}$ and
${\mathfrak{B}}$ (see \cite{KS}) by $\langle\, , \rangle$, there exits a
Drinfeld quantum double ${\mathcal D}({\mathfrak{A}}, {\mathfrak{B}})$ which is
a Hopf algebra whose underlying coalgebra is
${\mathfrak{A}}\otimes{\mathfrak{B}}$ with the tensor product coalgebra
structure, and its algebra structure is defined by
$$
(a\otimes b)(a'\otimes b')=\sum\langle S_{\mathfrak{B}}(b_{(1)}),
a'_{(1)} \rangle\langle b_{(3)}, a'_{(3)}\rangle aa'_{(2)}\otimes
b_{(2)}b', $$
 for $a, a' \in {\mathfrak{A}}$ and $b, b' \in {\mathfrak{B}}$, and whose
 antipode $S$ is given by
$$
S(a\otimes b)=(1\otimes S_{\mathfrak{B}}(b))(S_{\mathfrak{A}}(a) \otimes 1).
$$
\end{defi}

%\begin{defi} A bilinear form $\langle \, , \,
%\rangle : \mathcal{U}\times
% \mathcal{A}\longrightarrow \mathbb{K}$ is called a skew-dual pairing of
%two Hopf algebras $\mathcal{U}$ and $\mathcal{A}$,
%if $\langle , \rangle:
% {\mathcal{U}}^{cop}\times\mathcal{A}\longrightarrow \mathbb{C}(q^{\pm1})$ is a Hopf dual pairing of
% $\mathcal{A}$ and ${\mathcal{U}}^{cop}$, where
%${\mathcal{U}}^{cop}$ is the Hopf algebra having the opposite
%comultiplication to $\mathcal{U}$, and if $S_{\mathcal{U}}$ is
%invertible, then $S_{{\mathcal{U}}^{cop}}=S_{\mathcal{U}}^{-1}$.
% \end{defi}

Let $\mathcal B$ (resp. $\mathcal B'$) denote the Hopf
(Borel-type) algebra of $\mathcal{U}_0$ for $n=1$ or $n=2$ generated by
$x_j^+(0)$, $x_0^-(1),\, K_j^{\pm 1}$, $q^{d_1},\, q^{d_2},\,\gamma^{\pm\frac{1}2}$ (resp.
$x_j^-(0),\,x_0^+(-1),\, K_j^{\pm1}, \, q^{d_1},\, q^{d_2},\, \gamma^{\pm\frac{1}2}$) with $j\in I$.

\begin{prop}{\label{b:3} There exists a unique skew-dual pairing
$\langle \,, \rangle:\mathcal B' \times \mathcal
B\longrightarrow\mathbb{C}(q^{\pm\frac{1}{2}})$ of the Hopf subalgebras $\mathcal
B$ and $\mathcal B'$ such that $(i\in I)$:
\begin{gather}\label{e:pair1}
\langle x_i^-(0),\,x_j^+(0)\rangle=\delta_{i j}\frac{1}{q^{-1}-q}, \\ \label{e:pair2}
\langle x_0^-(1),\,x_0^+(-1)\rangle=\frac{1}{q^{-1}-q}, \\ \label{e:pair3}
\langle K_i^{-1},\,K_j\rangle^{\pm 1}=\langle K_i^{-1},\,K_j^{-1}\rangle^{\mp 1}=\langle K_i,\,K_j^{-1}\rangle^{\pm 1}=\langle K_i,\,K_j\rangle^{\mp 1}=q^{\pm a_{ij}},\\ \label{e:pair4}
\langle\gamma^{\pm\frac{1}{2}},\, \gamma^{\pm\frac{1}{2}}\rangle=1=\langle\gamma^{\pm\frac{1}{2}},\, \gamma^{\mp\frac{1}{2}}\rangle,\\ \label{e:pair5}
\langle q^{d_1},\, q^{d_1}\rangle=\langle q^{d_1},\, q^{d_2}\rangle=1=\langle q^{d_2},\, q^{d_1}\rangle=\langle q^{d_2},\, q^{d_2}\rangle,\\ \label{e:pair6}
\langle \gamma^{\pm\frac{1}2},\, K_i^{\pm}\rangle=\langle K_i^{\pm},\,\gamma^{\pm\frac{1}2}\rangle=1=\langle
K_i^{\pm1},\, \gamma^{\mp\frac{1}2}\rangle=\langle\gamma^{\mp\frac{1}2},\,K_i^{\pm1}\rangle,\\ \label{e:pair7}
\langle q^{d_1},\,K_i^{\pm1}\rangle=\langle q^{d_2},\,K_i^{\pm1}\rangle=q^{\delta_{0i}},\\ \label{e:pair8}
\langle K_i^{\pm1},\,q^{d_1}\rangle=\langle K_i^{\pm1},\,q^{d_2}\rangle=q^{-\delta_{0i}},\\ \label{e:pair9}
\langle q^{d_1},\,\gamma^{\pm\frac{1}2}\rangle=\langle q^{d_2},\,\gamma^{\pm\frac{1}2}\rangle=q^{\pm\frac{1}{2}}=
\langle \gamma^{\pm\frac{1}2},\,  q^{d_1}\rangle=\langle \gamma^{\pm\frac{1}2},\,  q^{d_2}\rangle,
\end{gather}
 and all others pairs of generators are $0$. Moreover,
$\langle S(b'), S(b)\rangle=\langle b', b\rangle$ for $b'\in
\mathcal B', ~b\in\mathcal B$. }
\end{prop}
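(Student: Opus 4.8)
The plan is to prescribe the pairing on generators via \eqref{e:pair1}--\eqref{e:pair9}, extend it to all of $\mathcal B'\times\widehat{\mathcal B}$ through the two multiplicativity axioms of a skew-dual pairing, and then verify that the extension annihilates the defining relations on each side. \textbf{Uniqueness} is the formal part and I would dispatch it first: the axioms $\langle b_1'b_2',b\rangle=\langle b_1'\otimes b_2',\Delta(b)\rangle$ and $\langle b',b_1b_2\rangle=\langle\Delta^{\mathrm{op}}(b'),b_1\otimes b_2\rangle$, together with $\langle b',1\rangle=\varepsilon(b')$ and $\langle 1,b\rangle=\varepsilon(b)$, reduce the value $\langle b',b\rangle$ on any pair of PBW monomials to an iterated expression in the (known) comultiplications and the prescribed generator values. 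Hence any two skew-dual pairings agreeing on generators coincide, so at most one exists.

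For \textbf{existence} I would first define the analogous bilinear form $\langle\,,\rangle_0$ on the free algebras over the two generating sets by the same recursion; coassociativity of $\Delta$ makes the left- and right-hand recursions mutually consistent, so $\langle\,,\rangle_0$ is unobstructed at this stage. The real content is that $\langle\,,\rangle_0$ descends to the quotients $\mathcal B'$ and $\widehat{\mathcal B}$, i.e.\ that the ideal generated by the relations \eqref{f:comm0}--\eqref{f:comm7} lies in the radical on each side. By the multiplicativity axioms it suffices to pair each defining relation $r$ against the \emph{generators} of the opposite algebra, since $\langle r,\,\cdot\,\rangle$ (resp.\ $\langle\,\cdot\,,r\rangle$) is then forced to vanish on all products. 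The root-lattice grading is what makes this finite: $\langle\,,\rangle_0$ vanishes on any pair of non-opposite weights, so all but a handful of these generator pairings are zero for weight reasons, leaving the Cartan commutation relations \eqref{f:comm0} (settled directly from \eqref{e:pair3}--\eqref{e:pair9}) and the Serre relations \eqref{f:comm5}--\eqref{f:comm7}.

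The \textbf{main obstacle} is precisely the Serre relations together with the degree $\mp1$ generators $x_0^+(-1)$ and $x_0^-(1)$: pairing \eqref{f:comm5}--\eqref{f:comm7} and the mixed relations involving $x_0^{\pm}(\mp1)$ forces the use of the explicit comultiplications \eqref{e:commult3}--\eqref{e:commult4}, whose correction terms range over the finite real-root set $\Pi_0$. Here the weight grading again localizes each pairing to a single summand of those sums, after which the required identity reduces to \eqref{e:pair1}--\eqref{e:pair2} combined with the quantum-bracket identities \eqref{v1}--\eqref{v2}. These cancellations run parallel to those already carried out in the bialgebra verification of Proposition~\ref{p:5.4}, so that computation can be reused with only bookkeeping changes; once the descent is established, $\langle\,,\rangle$ automatically satisfies all the axioms of a skew-dual pairing on $\mathcal B'\times\widehat{\mathcal B}$.

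It remains to prove $\langle S(b'),S(b)\rangle=\langle b',b\rangle$. I would introduce the twisted form $\langle b',b\rangle':=\langle S(b'),S(b)\rangle$ and observe that, since $S$ is an algebra anti-automorphism satisfying $\Delta\circ S=(S\otimes S)\circ\Delta^{\mathrm{op}}$, the form $\langle\,,\rangle'$ is again a skew-dual pairing of $\mathcal B'$ and $\widehat{\mathcal B}$. A short check on generators, using $S(K_i)=K_i^{-1}$, $S(\gamma^{\pm\frac12})=\gamma^{\mp\frac12}$, $S(x_i^{\pm}(0))$ and the remaining antipode values, shows that $\langle\,,\rangle'$ takes the same values \eqref{e:pair1}--\eqref{e:pair9} as $\langle\,,\rangle$. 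By the uniqueness established above, $\langle\,,\rangle'=\langle\,,\rangle$, which is exactly the asserted antipode invariance.
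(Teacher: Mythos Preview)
Your uniqueness argument and the antipode-invariance argument via the twisted pairing $\langle\,,\rangle'$ are both sound; the latter is in fact more explicit than the paper, which simply asserts the $S$-invariance.

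The genuine gap is in your existence argument, at the sentence ``By the multiplicativity axioms it suffices to pair each defining relation $r$ against the \emph{generators} of the opposite algebra, since $\langle r,\,\cdot\,\rangle$ is then forced to vanish on all products.'' This reduction is false. The axiom $\langle b',b_1b_2\rangle=\langle\Delta^{\mathrm{op}}(b'),b_1\otimes b_2\rangle$ expresses $\langle r,b_1b_2\rangle$ in terms of the \emph{components of $\Delta^{\mathrm{op}}(r)$} paired against $b_1,b_2$, not in terms of $\langle r,b_1\rangle$ and $\langle r,b_2\rangle$. Since a Serre relation $r$ is not primitive, knowing $\langle r,g\rangle=0$ for each single generator $g$ tells you nothing about $\langle r,g_1g_2g_3\rangle$. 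In fact, by weight reasons every pairing of a cubic Serre relation with a \emph{single} generator is automatically zero, so your proposed reduction would make the Serre check vacuous --- which it is not.

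What the paper actually does is pair each Serre relation $r$ (say the one of weight $2\alpha_1+\alpha_0$ in $\widehat{\mathcal B}$) against an \emph{arbitrary word} $X$ in $\mathcal B'$. The grading then forces $X$ to contain exactly two factors $x_1^-(0)$, one factor $x_0^-(1)$, and otherwise only Cartan elements; up to Cartan rescaling this leaves the three orderings $x_1^-(0)^2x_0^-(1)$, $x_1^-(0)x_0^-(1)x_1^-(0)$, $x_0^-(1)x_1^-(0)^2$. For each of these they compute $(\Delta^{\mathrm{op}})^{(2)}(X)$, extract the terms that can pair nontrivially with $x_1^+(0)\otimes x_1^+(0)\otimes x_0^+(-1)$ and its permutations, and verify the resulting $q$-identity. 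The correction terms in \eqref{e:commult3}--\eqref{e:commult4} over $\Pi_0$ drop out at this stage for weight reasons, so the computation is not parallel to Proposition~\ref{p:5.4}; it is a separate (and easier) direct check. Once you replace your invalid reduction by this finite case analysis over weight-matching monomials, the rest of your outline goes through.
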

%{\color{red} please check the exponent in (6.3)}

\begin{proof} \ The uniqueness is clear, as any
skew-dual pairing of a bialgebra is determined by the value on the
generators. We proceed to prove the existence of the Hopf pairing.

The pairing defined on the generators as given in \eqref{e:pair1}-\eqref{e:pair9} can be
extended to a bilinear form on $\mathcal B' \times
\mathcal B$ such that the defining properties in
Definition \ref{b:1} hold. We will verify that the relations in
$\mathcal B$ and $\mathcal B'$ are preserved, ensuring
that the form is well-defined and is a skew-dual pairing of
$\mathcal B$ and $\mathcal B'$.

First of all,  it is straightforward to check that the bilinear form
preserves the relation \eqref{e:comm0} for $K_i^{\pm1}$,
$\gamma^{\pm\frac{1}2}\in\mathcal B$ and
${K_i}^{\pm1}$, $\gamma^{\,\pm\frac{1}2}\in\mathcal B'$. Similar it also preserves \eqref{e:comm00} in
$\mathcal B$ or $\mathcal B'$ respectively.
So we are
left to verify that the bilinear form preserves the quantum Serre relations
in $\mathcal B$ and $\mathcal B'$.

Using \cite{HRZ}, we only need to verify the relations involving generators $x_i^{\pm1}(\mp1)$ in $\mathcal
B$ and $\mathcal B'$. Namely we need to check that
$$\langle X, (x_1^+(0))^2x_0^+(-1)-[2]x_1^+(0)x_0^+(-1)x_1^+(0)+x_0^+(-1)(x_1^+(0))^2\rangle=0,$$
for any word $X$ in the generators of  $\mathcal B'$. By
definition, this equals to
\begin{equation}{\label{b:2}
\begin{split}
\langle \Delta^{(2)}(X),\,& \, x_1^+(0)\otimes x_1^+(0)\otimes
x_0^+(-1)\\
&-[2]x_1^+(0)\otimes x_0^+(-1)\otimes x_1^+(0)+x_0^+(-1)\otimes x_1^+(0) \otimes x_1^+(0)\rangle,
\end{split}}
\end{equation}
where the $\Delta$ corresponds to $\Delta_{\mathcal B'}^{{\rm op}}$. If
one of these terms were nonzero, $X$ would have involved
exactly two $x_1^-(0)$ factors, one $x_0^+(1)$ factor, and arbitrarily
many $K_j^{\pm1}$ $(j\in I)$ or $\gamma^{\,\pm\frac{1}2}$ factors.
For simplicity, we first consider three
cases:

(i) \ If $X=x_1^-(0)^2 x_0^-(1)$, then $\Delta^{(2)}(X)$ is equal to
\begin{eqnarray*}
&(K_0^{-1}\otimes K_0^{-1} \otimes x_1^-(0)+K_0^{-1}\otimes x_1^-(0)
\otimes 1+ x_1^-(0)\otimes 1 \otimes 1)^2(K_0\otimes
K_0\otimes x_0^-(1)\\
&\quad+K_0 \otimes  x_0^-(1)\otimes 1+ x_0^-(1)\otimes 1\otimes
1 -\sum_{\ell=1}^{n-1}(q-q^{-1})x_{1,\,\ell}^+(0)K_0\ot
x_{\ell, 0}^-(1)\ot 1\\
&\quad-\sum_{\ell=1}^{n-1}(q-q^{-1})\Delta(x_{1,\,\ell}^+(0)K_0)\ot
x_{\ell, 0}^-(1)).
\end{eqnarray*}
 The relevant terms of
$\Delta^{(2)}(X)$ are
\begin{gather*}
x_1^-(0)K_1^{-1}K_0\otimes x_1^-(0)K_0\otimes x_0^-(1)+
K_1^{-1}x_1^-(0)K_0 \otimes
x_1^-(0)K_0\otimes x_0^-(1)\\
+ x_1^-(0)K_1^{-1}K_0\otimes K_1^{-1}x_0^-(1)\otimes x_1^-(0)+
K_1^{-1} x_1^-(0)K_0\otimes K_1^{-1} x_0^-(1)\otimes x_1^-(0)\\
+ K_1^{-1}K_1^{-1} x_0^-(1) \otimes x_1^-(0)K_1^{-1}\otimes  x_1^-(0)+
K_1^{-1}K_1^{-1} x_0^-(1)\otimes K_1^{-1} x_1^-(0)\otimes  x_1^-(0).
\end{gather*}
Therefore $(\ref{b:2})$ becomes
\begin{equation*}
\begin{split}
\langle x_1^-(0)\,&K_1^{-1}K_0,\, x_1^+(0) \rangle\langle
x_1^-(0)K_0,\, x_1^+(0) \rangle\langle x_0^-(1),\, x_0^+(-1)
\rangle\\
&\quad+\langle K_1^{-1}x_1^-(0)K_0,\, x_1^+(0)\rangle\langle
 x_1^-(0)K_{0},\, x_1^+(0)\rangle \langle x_0^-(1),\, x_0^+(-1)\rangle \\
&\quad-[2]\bigl(\langle x_1^-(0)K_1^{-1}K_0,\,x_1^+(0)\rangle
 \langle K_1^{-1}x_0^-(1),\, x_0^+(-1)\rangle\langle
x_1^-(0),\,x_1^+(0) \rangle\\
&\quad+\langle K_1^{-1}x_1^-(0)K_0,\, x_1^+(0)\rangle\langle
K_1^{-1}x_0^-(1),\, x_0^+(-1) \rangle\langle x_1^-(0),
x_1^+(0)\rangle\bigr)\\
&\quad + \bigl(\langle {K}_1^{-1}{K}_1^{-1}x_0^-(1),\, x_0^+(-1)
\rangle\langle x_1^-(0)K_1^{-1},\,x_1^+(0)\rangle\langle x_1^-(0),\,x_1^+(0)
\rangle\\
&\quad+\langle
 K_1^{-1}K_1^{-1}x_0^-(1),\,x_0^+(-1)\rangle\langle K_1^{-1}x_1^-(0),\,x_1^+(0) \rangle\langle x_1^-(0),\,x_1^+(0)\rangle\bigr)\\
%&=\frac{1}{(q^{-1}-q)^3}\bigl\{1+\langle K_1^{-1},\,K_1\rangle-(q{+}q^{-1})\bigl(
% \langle K_1^{-1},\, K_0^{-1}\rangle+
% \langle K_1^{-1},\, K_1 \rangle \\
%&\quad\langle K_1^{-1},\, K_0^{-1}\rangle\bigr)+\bigl(\langle K_1^{-1},\, K_0^{-1}\rangle^2+
% \langle K_1^{-1},\, K_0^{-1}\rangle^2\langle K_1^{-1},
% K_1\rangle\bigr)\bigr\}\\
&=\frac{1}{(q^{-1}-q)^3}\bigl\{1+q^{2}-(q{+}q^{-1})(q+q^{3})+(q^2+
 q^4)\bigr\}=0.
\end{split}
\end{equation*}

(ii) \ When $X=x_1^-(0)x_0^-(1)x_1^-(0)$, it is easy to get the relevant terms
of $\Delta^{(2)}(X)$:
\begin{gather*}
K_1^{-1}K_0x_1^-(0)\otimes x_1^-(0)K_0\otimes x_0^-(1)+
x_1^-(0)K_0K_1^{-1}\otimes K_0x_1^-(0)\otimes x_0^-(1)\\
+K_1^{-1}K_0x_1^-(0)\otimes K_1^{-1}x_0^-(1)\otimes x_1^-(0) +x_1^-(0)
K_0K_1^{-1}\otimes x_0^-(1)K_1^{-1} \otimes x_1^-(0)\\
+K_1^{-1}x_0^-(1)K_1^{-1} \otimes K_1^{-1}x_1^-(0)\otimes x_1^-(0)+
K_1^{-1}x_0^-(1)K_1^{-1}\otimes x_1^-(0)K_1^{-1} \otimes x_1^-(0).
\end{gather*}
Thus, \eqref{b:2} becomes
\begin{equation*}
\begin{split}
&\frac{1}{(q^{-1}{-}q)^3}\bigl\{\langle K_1^{-1},\,
K_1\rangle\langle K_0,\, K_1\rangle
 +\langle K_0,\, K_1\rangle-(q{+}q^{-1}) (\langle K_1^{-1},\, K_1\rangle
 \langle K_0,\, K_1\rangle\langle  K_1^{-1},\, K_0\rangle +1)\\
&\qquad\qquad +\bigl(\langle K_1^{-1},\,
K_0 \rangle\langle  K_1^{-1},\,K_1\rangle+
 \langle K_1^{-1},\, K_0\rangle\bigr)\bigr\}\\
 &=\frac{1}{(q^{-1}{-}q)^3}\bigl\{(q^3+q)-(q+q^{-1})(q^2+1)+(q+q^{-1})\bigr\}=0.
\end{split}
\end{equation*}

(iii) \ If $X=x_0^-(1)x_1^-(0)x_1^-(0)$, one can similarly get that
$(\ref{b:2})$ vanishes.

\medskip
Finally, if $X$ is any word involving exactly two $x_1^-(0)$ factors, one
$x_0^-(1)$ factor, and arbitrarily many factors $K_j^{\, \pm1}$
$(j\in I$)  and $\gamma^{\,\pm\frac{1}2}$, then
$(2.2)$ will just be a scalar multiple of one of the quantities we
have already calculated, and then it will be zero.

Analogous calculations show that the relations in $\mathcal
B'$ are preserved.
\end{proof}

The following can be proved similarly as \cite{BGH}.
\begin{theo} {\label{b:4}
${\mathcal D}({\mathcal B}, {\mathcal B'})$ is isomorphic to
${\mathcal U}_0$  for $n=1$ or $n=2$ as a Hopf algebra.}
\end{theo}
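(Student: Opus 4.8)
The plan is to realize the isomorphism as the multiplication map, verify it is a morphism of Hopf algebras on generators, and then establish bijectivity via a triangular-decomposition argument. By Proposition \ref{b:3} the Borel-type subalgebras $\mathcal B$ and $\mathcal B'$ are skew-paired Hopf algebras, so Definition \ref{b:1} produces the Drinfeld double $\mathcal D(\mathcal B,\mathcal B')$ with underlying coalgebra $\mathcal B\otimes\mathcal B'$. First I would define a linear map
$$
\Theta:\ \mathcal D(\mathcal B,\mathcal B')\longrightarrow U_q(sl_{n+1},tor),\qquad \Theta(a\otimes b)=a\cdot b,
$$
where $a\cdot b$ denotes the product taken via the natural embeddings of the two subalgebras into $U_q(sl_{n+1},tor)\simeq\mathcal U_0$ (Theorem \ref{T:newgen}). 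Since $\mathcal B$ and $\mathcal B'$ together contain all generators $x_i^\pm(0)$, $x_0^+(-1)$, $x_0^-(1)$ and the whole Cartan part, $\Theta$ is surjective by construction, and its restrictions to $\mathcal B$ and $\mathcal B'$ are algebra maps since these are genuine subalgebras.

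The heart of the argument is that $\Theta$ is an algebra homomorphism, for which the only nontrivial content is the cross relations of the double. By Definition \ref{b:1}, for $a\in\mathcal B$ and $b\in\mathcal B'$ one has
$$
(1\otimes b)(a\otimes 1)=\sum\langle S_{\mathcal B'}(b_{(1)}),\,a_{(1)}\rangle\,\langle b_{(3)},\,a_{(3)}\rangle\,a_{(2)}\otimes b_{(2)},
$$
so I would check that applying $\Theta$ to this identity reproduces exactly the mixed relation \eqref{e:comm4}. The decisive cases are the pairs of opposite Drinfeld generators: evaluating on $a=x_i^+(0)$, $b=x_j^-(0)$ (using $\langle x_i^-(0),x_j^+(0)\rangle=\delta_{ij}/(q^{-1}-q)$ from \eqref{e:pair1} together with the triangular coproducts in $\mathcal B$ and $\mathcal B'$) must yield $[x_i^+(0),x_j^-(0)]=\delta_{ij}(K_i-K_i^{-1})/(q-q^{-1})$, i.e.\ \eqref{f:comm4}, while the pair $a=x_0^-(1)$, $b=x_0^+(-1)$ (using \eqref{e:pair2}) must yield $[x_0^+(-1),x_0^-(1)]=(\gamma^{-1}K_0-\gamma K_0^{-1})/(q-q^{-1})$. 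All remaining generator pairs are orthogonal under the pairing, so the double imposes only commutation through the Cartan, matching \eqref{e:comm00}. The Cartan--Cartan values \eqref{e:pair3}--\eqref{e:pair9} are precisely what makes the two copies of $K_i,q^{d_1},q^{d_2},\gamma^{1/2}$ coming from $\mathcal B$ and $\mathcal B'$ act consistently, so that $\Theta$ descends after the two Cartan copies are identified with the single Cartan of $U_q(sl_{n+1},tor)$, as in the reduced double of \cite{BGH}.

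For the coalgebra structure I would verify $\Delta_{U}\circ\Theta=(\Theta\otimes\Theta)\circ\Delta_{\mathcal D}$, which reduces to comparing the tensor-product coproduct on $\mathcal B\otimes\mathcal B'$ with the restriction to the two Borels of the toroidal comultiplication \eqref{e:commult1}--\eqref{e:commult4}; since $\mathcal B$ and $\mathcal B'$ are Hopf subalgebras and $\Delta$ was already shown to be a bialgebra structure in Proposition \ref{p:5.4}, this is a formal check on generators, together with compatibility of the antipode (which in the double is $S(a\otimes b)=(1\otimes S_{\mathcal B'}(b))(S_{\mathcal B}(a)\otimes 1)$). Finally, for injectivity I would use the triangular PBW decomposition $U_q(sl_{n+1},tor)\simeq U_q(\mathfrak n^-)\otimes U_q^0(\mathfrak g)\otimes U_q(\mathfrak n^+)$ recorded in Section 3: after identifying the shared Cartan, $\mathcal D(\mathcal B,\mathcal B')$ inherits a matching triangular factorization into positive, Cartan, and negative parts, and $\Theta$ respects these factorizations, so a PBW-basis comparison forces $\Theta$ to be bijective and hence a Hopf algebra isomorphism.

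I expect the main obstacle to be the explicit cross relation for the imaginary-type pair $x_0^-(1)$, $x_0^+(-1)$: unlike the Drinfeld--Jimbo generators, these carry the nontrivial summation terms of the coproducts \eqref{e:commult3}--\eqref{e:commult4}, so the straightening formula of the double produces many potential contributions involving the auxiliary root vectors $x_{\alpha_{\beta 0}}^\pm$. Showing that all of these collapse---via the Serre relations and the orthogonality of $x_{\alpha_{\beta 0}}^\pm$ under the pairing---to the single Cartan term $(\gamma^{-1}K_0-\gamma K_0^{-1})/(q-q^{-1})$ is where the real bookkeeping lies, exactly paralleling the two-parameter computation of \cite{BGH}.
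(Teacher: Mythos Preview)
Your proposal is correct and follows the same approach as the paper, which in fact gives no details at all and simply defers to \cite{BGH}; your outline via the multiplication map, verification of cross relations from the pairing, and the triangular PBW comparison is precisely the argument of \cite{BGH} specialized to this setting. You also correctly flag the one subtlety the paper glosses over, namely that $\mathcal D(\mathcal B,\mathcal B')$ literally carries two copies of the Cartan and one must pass to the reduced double (identifying $K_i\otimes 1$ with $1\otimes K_i$, etc.) before the map $\Theta$ becomes an isomorphism.
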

%\begin{proof} We will denote the image $x_i^+(0)\otimes 1$ of
%$x_i^+(0)$ in ${\mathcal D}({\mathcal B}, {\mathcal B'})$ by
%${\widehat{x_i^+(0)}}$,
%and denote the image $x_0^+(-1)\otimes 1$ of
%$x_0^+(-1)$ in ${\mathcal D}({\mathcal B}, {\mathcal B'})$ by
%${\widehat{x_0^+(-1)}}$
%and similarly for $K_i^{\pm1},
%\gamma^{\pm\frac{1}2}$, denote the image $1\otimes x_i^-(0)$ of
%$x_i^-(0)$ in ${\mathcal D}({\mathcal B}, {\mathcal B'})$ by
%${\widehat{x_i^-(0)}}$, and denote the image $x_0^-(1)\otimes 1$ of
%$x_0^-(1)$ in ${\mathcal D}({\mathcal B}, {\mathcal B'})$ by
%${\widehat{x_0^-(1)}}$
% and similarly for $K_i^{\,\pm1},
%\gamma^{\,\pm\frac{1}2}$. Define a map $\varphi: {\mathcal
%D}({\mathcal B}, {\mathcal B'})\longrightarrow
%\mathcal{U}_0$ by
%\begin{gather*}
%\varphi({\widehat{x_i^+(0)}})=x_i^+(0),\quad\varphi({\widehat{x_i^-(0)}})=x_i^+(0),\quad
%\varphi(\widehat{x_0^+(-1)})=x_0^+(-1),\\
%\varphi(\widehat{x_0^-(1)})=x_0^-(1)\quad,\varphi(\widehat{K}_i^{\pm1})=K_i^{\pm1},\quad
%\varphi({\widehat{\gamma}^{\pm\frac{1}{2}}})=\gamma^{\pm\frac{1}{2}}.
%\end{gather*}
%
%\end{proof}

\begin{remark} Using the Hopf double structure, one can show the existence of
a universal R-matrix for the Hopf algebra ${\mathcal U}_0$.
The universal $R$-matrix \cite{D1} for the Hopf algebra $U$ is
an invertible element $R$ of $U\widehat{\otimes} U$ satisfying the conditions
\begin{align*}
\Delta(x) &= R\Delta(x)R^{-1}, \quad \forall x\in U,\\
(\Delta\otimes id)R &= R_{13}R_{23},\quad  (id\otimes\Delta)R = R_{13}R_{12},
\end{align*}
where $\Delta= \Delta^{op}$ is the opposite comultiplication in $U$. In this case $U$ is a
quantum double of a Hopf algebra $U^+, U\simeq U^+\otimes U^-, U^-$ being dual to $U^+$ with an
opposite comultiplication, then $U$ admits a canonical presentation of the universal R-matrix
$R=\sum e_i\otimes f_i$, where $e_i$ and $f_i$ are dual bases of $U^+$ and $U^-$.
\end{remark}
\smallskip

\vskip30pt \centerline{\bf ACKNOWLEDGMENT}

N. Jing would like to thank the partial support of
Simons Foundation grant 523868, NSFC grant 11531004 and 12171303, and NSF grants
1014554 and 1137837. H. Zhang would
like to thank the support of NSFC grant 11871325.
\bigskip

\bibliographystyle{amsalpha}

\end{document}